\newcommand{\cuad}{{\sqcap\kern-.68em\sqcup}}
\newcommand{\be}{\begin{equation}}
\newcommand{\ee}{\end{equation}}
\newtheorem{lemma}{Lemma}[section]
\newtheorem{prop}{Proposition}[section]
\newtheorem{theorem}{Theorem}
\newtheorem{remark}{Remark}[section]
\newcommand{\bremark}{\begin{remark} \em}
\newcommand{\eremark}{\end{remark} }
\long\def\comment#1{\marginpar{\vtop{\raggedright\small$\bullet$\ #1}}}
\long\def\hide#1{}
\long\def\anot#1{\ \\{\bf \crr ANNOTATION.} {#1}}
\long\def\noanot#1{}
\definecolor{redd}{RGB}{200,0,0}
\def\crr{\color{red}}
\long\def\elim#1{{\color{red} ELIMINAR\\ #1}}
\def\crr{}
\long\def\comment#1{}
\long\def\anot#1{}
\long\def\elim#1{}
\long\def\comment#1{\marginpar{\raggedright\small$\bullet$\ #1}}
\numberwithin{equation}{section}
\title[singularities in the harmonic map flow with free boundary]{Singularity formation in the harmonic map flow with free boundary}
\author[Y. Sire]{Yannick Sire}
\address{\noindent Department of Mathematics, Johns Hopkins University, 404 Krieger Hall, 3400 N. Charles Street, Baltimore, MD 21218, USA}
\email{sire@math.jhu.edu}
\author[J. Wei]{Juncheng Wei}
\address{\noindent Department of Mathematics, University of British Columbia, Vancouver, B.C., Canada, V6T 1Z2}
\email{jcwei@math.ubc.ca}
\author[Y. Zheng]{Youquan Zheng}
\address{\noindent School of Mathematics, Tianjin University, Tianjin 300072, P. R. China}
\email{zhengyq@tju.edu.cn}
\begin{document}
\begin{abstract}
In the past years, there has been a new light shed on the harmonic map problem with free boundary in view of its connection with nonlocal equations. Here we fully exploit this link, considering the harmonic map flow with free boundary
\begin{equation}\label{e:main0}
\begin{cases}
u_t = \Delta u\text{ in }\mathbb{R}^2_+\times (0, T),\\
u(x,0,t) \in \mathbb{S}^1\text{ for all }(x,0,t)\in \partial\mathbb{R}^2_+\times (0, T),\\
\frac{du}{dy}(x,0,t)\perp T_{u(x,0,t)}\mathbb{S}^1\text{ for all }(x,0,t)\in \partial\mathbb{R}^2_+\times (0, T),\\
u(\cdot, 0) = u_0\text{ in }\mathbb{R}^2_+
  \end{cases}
\end{equation}
for a function $u:\mathbb{R}^2_+\times [0, T)\to \mathbb{R}^2$. Here $u_0 :\mathbb{R}^2_+\to \mathbb{R}^2$ is a given smooth map and $\perp$ stands for orthogonality. We prove the existence of initial data $u_0$ such that (\ref{e:main0}) blows up at finite time with a profile being the half-harmonic map. This answers a question raised by Yunmei Chen and Fanghua Lin in Remark 4.9 of \cite{ChenLinJGA1998}.
\end{abstract}
\maketitle
\section{Introduction}
Let $(M, g)$ be an $m$-dimensional Riemannian manifold with boundary $\partial M$ and $N$ be an $l$-dimensional manifold without boundary. Suppsoe $\Sigma$ is a $k$-dimensional submanifold in $N$ without boundary. Any continuous map $u_0: M\to N$ satisfying $u_0(\partial M)\subset \Sigma$ defines a relative homotopy class in maps from $(M, \partial M)$ to $(N, \Sigma)$. A map $u: M\to N$ with $u(\partial M)\subset \Sigma$ is called homotopic to $u_0$ if there exist a continuous homotopy $h:[0, 1]\times M \to N$ satisfying $h([0,1]\times \partial M)\subset \Sigma$, $h(0) = u_0$ and $h(1) = u$. An interesting problem is that whether or not each relative homotopy class of maps has a representation by harmonic maps, which is equivalent to the following problem,
\begin{equation}\label{e:harmonicmapwithfreeboundary}
\begin{cases}
-\Delta u = \Gamma(u)(\nabla u, \nabla u),\\
u(\partial M)\subset \Sigma,\\
\frac{\partial}{\partial \nu}u\perp T_u\Sigma.
  \end{cases}
\end{equation}
Here $\nu$ is the unit normal vector of $M$ along the boundary $\partial M$, $\Delta \equiv \Delta_M$ is Laplace-Beltrami operator of $M$, $\Gamma$ is the second fundamental form of $N$ (viewed as a submanifold in $\mathbb{R}^n$), $T_pN$ is the tangent space in $\mathbb{R}^n$ of $N$ at $p$ and $\perp$ means orthogonal in $\mathbb{R}^n$. (\ref{e:harmonicmapwithfreeboundary}) is the Euler-Lagrangian equation for critical points of the following energy functional
\begin{equation*}
E(u) = \int_{M}|\nabla u|^2dM
\end{equation*}
defined on the space of maps
\begin{equation*}
H^1_\Sigma(M, N) = \{u\in H^1(M, N): u(\partial M)\subset \Sigma\}.
\end{equation*}
Here $H^1(M, N)$ is the usual Sobolev space of maps $u: M\to N$ satisfying $\nabla u\in L^2$. Existence results and partial regularity of energy minimizing maps on $H^1_\Sigma(M, N)$ were established (for example) in \cite{BaldesMM1982}, \cite{DuzaarSteffen1989JRAM}, \cite{DuzaarSteffenAA1989}, \cite{GulliverJostJRAM1987}, \cite{HardtLinCPAM1989}. A classical method for (\ref{e:harmonicmapwithfreeboundary}) is to study the following parabolic problem
\begin{equation}\label{e:harmonicmapflowwithfreeboundary}
\begin{cases}
\partial_t u -\Delta u = \Gamma(u)(\nabla u, \nabla u)\text{ on }M\times [0, \infty),\\
u(x, t)\in \Sigma\text{ on }x\in \partial M, t\geq 0,\\
\frac{\partial}{\partial \nu}u(x, t)\perp T_{u(x, t)}\Sigma \text{ for } x\in \partial M, t\geq 0,\\
u(\cdot, 0) = u_0\text{ on }M.
  \end{cases}
\end{equation}
This is the so-called harmonic map flow with free boundary. (\ref{e:harmonicmapflowwithfreeboundary}) was first studied by Ma \cite{MaLiCMH1991} in the case $m = dim M = 2$, where a global existence and uniqueness result for finite energy weak solutions were obtained under geometrical hypotheses on $N$ and $\Sigma$. Global existence theorem for weak solutions of (\ref{e:harmonicmapflowwithfreeboundary}) were also established by Struwe in \cite{StruweManMath1991}. In \cite{Hamilton1975LNM}, Hamilton considered the case when $\partial N = \Sigma$ is totally geodesic and $k_N\leq 0$.
He proved the global existence of a classical solution for (\ref{e:harmonicmapflowwithfreeboundary}). When $N = \mathbb{R}^n$, (\ref{e:harmonicmapflowwithfreeboundary}) is the standard heat equation
\begin{equation*}
u_t - \Delta u = 0 \text{ on }M \times [0, \infty).
\end{equation*}
As pointed out in \cite{ChenLinJGA1998} and \cite{StruweManMath1991}, estimates near the boundary for (\ref{e:harmonicmapflowwithfreeboundary}) are quite difficult due to the high nonlinearity of the boundary conditions. In \cite{jost2016qualitative}, Jost, Liu and Zhu showed that the energy identity at finite singular time as well as at infinity time and the no-neck property holds at infinite time in a blowing-up process for (\ref{e:harmonicmapflowwithfreeboundary}).

In the seminal paper \cite{ChenLinJGA1998} by Chen and Lin, the blow-up phenomenon for harmonic map flow with free boundary problem was studied,
where the authors gave many blow-up examples in higher dimensions and also a blowing-up theorem. In low dimensions, they asked the following question: {\em ``When $M$ is a smooth domain in $\mathbb{R}^2$, $N = \mathbb{R}^n$ and $\Sigma$ a smooth compact submanifold of $\mathbb{R}^n$, is there is a smooth initial datum $u_0$ such that (\ref{e:harmonicmapflowwithfreeboundary}) has no global smooth solutions ?" }.  In this paper we answer this question affirmatively.
More precisely we consider the problem (\ref{e:harmonicmapflowwithfreeboundary}) when $M = \mathbb{R}^2_+$ and $\Sigma = \mathbb{S}^1\subset\mathbb{R}^2$, i.e.  the following parabolic equation
\begin{equation}\label{e:main}
\begin{cases}
u_t = \Delta u\text{ in }\mathbb{R}^2_+\times (0, T),\\
u(x,0,t) \in \mathbb{S}^1\text{ for all }(x,0,t)\in \partial\mathbb{R}^2_+\times (0, T),\\
-\frac{du}{dy}(x,0,t)\perp T_{u(x,0,t)}\mathbb{S}^1\text{ for all }(x,0,t)\in \partial\mathbb{R}^2_+\times (0, T),\\
u(\cdot, 0) = u_0\text{ in }\mathbb{R}^2_+
  \end{cases}
\end{equation}
for a function $u:\mathbb{R}^2_+\times [0, T)\to \mathbb{R}^2$. Here $u_0 :\mathbb{R}^2_+\to \mathbb{R}^2$ is a given smooth map and $\perp$ stands for orthogonality.

The stationary solution of (\ref{e:main}) $u:\mathbb{R}^2_+\to \mathbb{R}^2$ satisfies
\begin{equation}\label{e:halfharmonicmap}
\begin{cases}
\Delta u = 0\text{ in }\mathbb{R}^2_+\times (0, T),\\
u(x,0) \in \mathbb{S}^1\text{ for all }(x,0)\in \partial\mathbb{R}^2_+:=\mathbb{R}\times \{0\},\\
-\frac{du}{dy}(x,0)\perp T_{u(x,0)}\mathbb{S}^1\text{ for all }(x,0)\in \partial\mathbb{R}^2_+.
  \end{cases}
\end{equation}
This is the harmonic extension form of the so-called half harmonic map from $\mathbb{R}$ into $\mathbb{S}^1$, which was systematically studied in
\cite{MillotSireARMA2015} and the nondegeneracy property was proved in \cite{sire2017nondegeneracy}. In particular, it was proved in \cite{MillotSireARMA2015} that
\begin{prop}
Let $u\in \dot{H}^{1/2}(\mathbb{R},\mathbb{S}^1)$ be a non-constant entire half-harmonic map and $u^e$ be its harmonic extension to $\mathbb{R}^2_+$ satisfying (\ref{e:halfharmonicmap}). There exist $d\in\mathbb{N}$, $\vartheta\in \mathbb{R}$, $\{\lambda_k\}_{k=1}^d\subset (0, \infty)$ and $\{a_k\}_{k=1}^d\subset \mathbb{R}$ such that $u^e(z)$ or its complex conjugate equals to
$$
e^{i\vartheta}\prod_{k=1}^d\frac{\lambda_k(z-a_k)-i}{\lambda_k(z-a_k)+i}.
$$
Furthermore, the energy can be expressed as
$$
\mathcal{E}(u,\mathbb{R}) = [u]^2_{H^{1/2}(\mathbb{R})}=\frac{1}{2}\int_{\mathbb{R}^2_+}|\nabla u^e|^2dz = \pi d.
$$
\end{prop}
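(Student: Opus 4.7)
The plan is to reduce to the classical theory of finite Blaschke products in three steps: show that $u^e$ (up to complex conjugation) is holomorphic on $\mathbb{R}^2_+$; classify such holomorphic maps having unimodular boundary trace and finite Dirichlet energy; and compute the energy via the area formula.

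The holomorphicity step is driven by the Hopf differential
$$H(z)=(\partial_z u^e_1)^2+(\partial_z u^e_2)^2,$$
which is holomorphic in $\mathbb{R}^2_+$ because $u^e$ is componentwise harmonic. On $\partial \mathbb{R}^2_+$, the constraint $u^e\in \mathbb{S}^1$ gives $\partial_x u^e\perp u^e$ and the free boundary condition gives $\partial_y u^e\parallel u^e$; together these force $\partial_x u^e\cdot \partial_y u^e=0$, which is exactly $\Im H=0$ on $\mathbb{R}$. Schwarz reflection extends $H$ to an entire holomorphic function, which inherits $L^1$ membership from the pointwise bound $|H|\lesssim|\nabla u^e|^2$ and the finiteness of the Dirichlet energy; since every entire $L^1$ holomorphic function vanishes identically (apply the mean value property at large radii), we get $H\equiv 0$. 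Pointwise this gives $\partial_z u^e_2=\pm i\,\partial_z u^e_1$, and a connectedness argument (using smoothness and non-constancy of $u^e$) fixes one sign globally, yielding either holomorphicity of $u^e$ or of $\overline{u^e}$.

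Assume now $u^e$ is holomorphic. Subharmonicity of $|u^e|$ together with the boundary equality $|u^e|=1$ gives $|u^e|\le 1$ in $\mathbb{R}^2_+$, so $u^e$ is an inner function on the half-plane. The Nevanlinna factorization writes $u^e=e^{i\vartheta}BS$ with $B$ a Blaschke product and $S$ a singular inner factor, and finiteness of $\int |(u^e)'|^2$ excludes both infinite-degree $B$ and non-trivial $S$ (each contributes infinite Dirichlet energy); hence $u^e=e^{i\vartheta}B$ with $B=\prod_{k=1}^d (z-z_k)/(z-\bar z_k)$ of finite degree $d$. Writing $z_k=a_k+i/\lambda_k$ with $a_k\in \mathbb{R}$, $\lambda_k>0$ recasts each factor in the claimed form. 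The energy identity is then immediate: $|\nabla u^e|^2=2|(u^e)'|^2$ for holomorphic $u^e$, and the area formula $\int_{\mathbb{R}^2_+}|(u^e)'|^2=\pi d$ for a degree-$d$ Blaschke product yields $\tfrac12\int |\nabla u^e|^2=\pi d$.

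The main obstacle lies in the two appeals to complex-analytic machinery. Schwarz reflection of $H$ requires continuity of $H$ up to $\partial\mathbb{R}^2_+$, which rests on the boundary regularity theory of half-harmonic maps developed in \cite{MillotSireARMA2015}; and the use of Nevanlinna factorization together with the energy exclusion of singular inner factors, while classical, needs checking in this specific half-plane setting. A more self-contained alternative for the second point is to bound the number of zeros of $u^e$ directly by the Dirichlet energy, form the Blaschke product $B$ explicitly, and show via a maximum-principle argument that $\log|u^e/B|$ is a bounded harmonic function on $\mathbb{R}^2_+$ with zero boundary trace, hence identically zero; this forces $u^e/B=e^{i\vartheta}$ without invoking the full factorization theory.
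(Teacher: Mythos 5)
The paper does not give its own proof of this proposition: it is quoted from Millot--Sire \cite{MillotSireARMA2015}, where it is established. So there is no in-paper argument to compare against; I can only assess your proposal on its merits and against the source.

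Your strategy is the standard one and, I believe, is essentially the route taken in \cite{MillotSireARMA2015}: reduce to (anti-)holomorphicity via the Hopf differential, then classify finite-energy inner functions as finite Blaschke products, then compute the energy by the area formula. The Hopf-differential step is correct. With $H=(\partial_z u^e_1)^2+(\partial_z u^e_2)^2$ one has
\begin{equation*}
\Im H=-\tfrac12\,\partial_x u^e\cdot\partial_y u^e,
\end{equation*}
which vanishes on $\partial\mathbb{R}^2_+$ precisely because the constraint $|u^e|=1$ gives $\partial_x u^e\perp u^e$ while the free-boundary condition gives $\partial_y u^e\parallel u^e$. Schwarz reflection, the pointwise bound $|H|\le\tfrac14|\nabla u^e|^2$, and the fact that an entire $L^1$ harmonic function vanishes then give $H\equiv 0$. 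Your connectedness argument for fixing the sign is correct: $\partial_z u^e_1$ is holomorphic and not identically zero (else $u^e$ is constant), so its zero set is discrete and does not disconnect $\mathbb{R}^2_+$.

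Two places deserve more care than the sketch currently gives them, though neither is a fatal gap. First, the maximum-principle deduction $|u^e|\le 1$ does not follow from subharmonicity plus boundary data alone on the non-compact half-plane; one must rule out growth at infinity. This is where finite Dirichlet energy is actually used: transport the problem conformally to the disk (Dirichlet energy is conformally invariant), note the trace lies in $H^{1/2}(\mathbb{S}^1)$, and invoke the boundary regularity of \cite{MillotSireARMA2015} to conclude $u^e$ is a bounded holomorphic inner function. Second, the Nevanlinna factorization on the half-plane carries an extra factor $e^{i\alpha z}$, $\alpha\ge 0$, coming from a possible point mass at infinity, in addition to the Blaschke product and the singular inner factor; you should mention that this factor is also excluded by finite energy (its Dirichlet integral over $\mathbb{R}^2_+$ diverges). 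The exclusion of nontrivial singular inner factors by energy, which you quote as ``classical,'' is indeed known but is precisely the kind of assertion that should be cited or proven; your self-contained alternative at the end (comparing $u^e$ against the explicit Blaschke product of its zeros and showing the ratio is a unimodular constant via Liouville) is a clean way to sidestep the full factorization theory, and is closer in spirit to how this kind of classification is often written up.

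The energy computation at the end is correct: for holomorphic $u^e=f$ one has $|\nabla u^e|^2=2|f'|^2$, and $\int_{\mathbb{R}^2_+}|f'|^2$ is the area of the image of the half-plane counted with multiplicity, which for a degree-$d$ Blaschke product onto the unit disk is $\pi d$.
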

This proposition indicates that the map $\omega:\mathbb{R}\to\mathbb{S}^1$
\begin{equation}\label{e:halfharmonicmapcanonical}
x\to \begin{pmatrix}
     \frac{2x}{x^2+1}\\
     \frac{x^2-1}{x^2+1}
\end{pmatrix}
\end{equation}
is a half-harmonic map which corresponds to the case $\vartheta = 0$, $d=1$, $\lambda_1=1$ and $a_1=0$. Notice that the previous equations involve nonlinear Neumann boundary conditions. This is a feature of nonlocal problems and as previously mentioned, we shall exploit this fact in a systematic way.
Our main result is
\begin{theorem}\label{t:main}
Given points $q = (q_1,\cdots, q_k)\in \left(\partial\mathbb{R}^2_+\right)^k:=(\mathbb{R}\times \{0\})^k$ and any sufficiently small $T > 0$, there exists $u_0$ such that the solution $u_q(x, t)$ of Problem (\ref{e:main}) blows-up at exactly those $k$ points as $t\nearrow T$. More precisely, there exist numbers $k^*_i > 0$ and a function $u_*\in H^1(\mathbb{R}^2_+)\cap C(\mathbb{R}^2_+)$ such that
\begin{equation*}
u_q(x, y, t) - u_*(x, y) - \sum_{j = 1}^k\left[\omega\left(\frac{x-q_i}{\lambda_i}, \frac{y}{\lambda_i}\right) - \omega(\infty)\right]\to 0 \text{ as }t\nearrow T,
\end{equation*}
in the $H^1$ and uniform senses in $\mathbb{R}^2_+$ where
\begin{equation*}
\lambda_i(t) = k^*_i\frac{T-t}{|\log(T-t)|^2}(1 + o(1)) \text{ as }t\nearrow T.
\end{equation*}
In particular, we have
\begin{equation*}
|\nabla u_q(\cdot, \cdot, t)|^2\rightharpoonup |\nabla u_*|^2 + 2\pi\sum_{j=1}^k\delta_{q_j}\text{ as }t\nearrow T.
\end{equation*}
\end{theorem}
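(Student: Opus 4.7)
The plan is to construct $u_q$ via the inner-outer gluing scheme of Dávila--del Pino--Wei adapted to the free-boundary (nonlocal) setting. Write the ansatz
\begin{equation*}
u(x,y,t) \;=\; U_*(x,y,t) + \sum_{i=1}^{k}\Bigl[\omega\!\Bigl(\tfrac{x-q_i}{\lambda_i(t)},\tfrac{y}{\lambda_i(t)}\Bigr)-\omega(\infty)\Bigr]\eta_i(x,y) \;+\; \phi(x,y,t),
\end{equation*}
where $\omega$ is the half-harmonic map (\ref{e:halfharmonicmapcanonical}), $\eta_i$ is a spatial cutoff near $q_i$, $U_*$ is a smooth global correction chosen to kill the leading boundary mismatch of the bubble sum (in particular so that $U_*$ converges to the claimed $u_*$ as $t\to T$), and $\lambda_i(t)>0$ are modulation parameters to be found. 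The remainder $\phi$ is further split as $\phi=\sum_i \tilde\eta_i\,\phi_i^{\rm in}(\tfrac{x-q_i}{\lambda_i},\tfrac{y}{\lambda_i},t)+\psi(x,y,t)$, where $\phi_i^{\rm in}$ solves an inner linear problem at scale $\lambda_i$ and $\psi$ is the outer correction living on all of $\R^2_+$.

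The key point is that both the bulk equation and the boundary condition linearize around $\omega$. Because $\omega(\cdot,0)$ is $\mathbb{S}^1$-valued, the tangent-space projection of $-\partial_y$ applied to $\phi^{\rm in}+\omega$ produces, to leading order, the half-Laplacian $(-\Delta)^{1/2}$ acting on the tangential perturbation of $\omega$ along $\partial\R^2_+$; the first-order interior operator becomes $\lambda_i^2\partial_t \phi_i^{\rm in}- L_\omega \phi_i^{\rm in}$, where $L_\omega$ is the linearized half-harmonic map operator whose kernel, by the nondegeneracy proposition of \cite{sire2017nondegeneracy}, is exactly generated by translations and dilations of $\omega$. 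First I would set up weighted $L^\infty$ norms (polynomial space weights at scale $\lambda_i$, power weights in $(T-t)$) in which the heat-semigroup half-space estimates and the nonlocal linear theory for $L_\omega$ yield an a priori solvable inner problem modulo finite-dimensional orthogonality conditions against the kernel elements, thereby producing an explicit right inverse.

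Imposing the orthogonality (solvability) condition for the scaling mode gives, after projecting the full error against $Z_{0,i}:=\partial_\lambda \omega|_{\lambda=1}$ and integrating by parts, a nonlocal integro-differential equation for $\lambda_i$ of the schematic form
\begin{equation*}
c_0\,\dot\lambda_i(t)\,\log\tfrac{1}{T-t} \;=\; c_1\,\lambda_i(t)\,\frac{1}{T-t} \;+\; \text{lower order},
\end{equation*}
which can be solved at leading order by $\lambda_i(t) = k_i^*(T-t)|\log(T-t)|^{-2}(1+o(1))$ with $k_i^*>0$ determined by the geometric constants of $\omega$. This is the mechanism producing the stated rate. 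A parallel (easier) orthogonality against translation modes pins the blow-up locations to $q_i$. The outer problem is then a linear heat equation on $\R^2_+$ with an inhomogeneous Neumann-type boundary condition driven by the $\omega$-tails (which decay only like $|y|^{-1}$), and one solves it by the half-space heat kernel in weighted norms that absorb this slow decay.

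The main obstacle is the simultaneous closure of a fixed-point argument for the triple $(\phi^{\rm in},\psi,\vec\lambda)$: the inner theory requires the outer perturbation to be small in $C^1$ on each inner scale, while the outer theory requires the inner contributions to generate only controlled error, and the modulation equations require $C^1$-in-time estimates. I would handle this by a contraction in a product weighted space combined with a topological (Schauder) argument on the modulation parameters, exactly as in the closed-manifold 2D harmonic map flow case, with the additional technical ingredient that the boundary nonlinearity is nonlocal and must be linearized in fractional Sobolev spaces using the $\dot H^{1/2}$ theory developed in \cite{MillotSireARMA2015}. Verifying that the final solution attains the stated $H^1$ and uniform limit as $t\nearrow T$, and that $|\nabla u_q|^2$ concentrates at each $q_i$ with mass $2\pi$ (one half-harmonic bubble, $d=1$), is then a consequence of the energy identity $\mathcal{E}(\omega,\R)=\pi$ given in the Proposition and of the gluing error estimates.
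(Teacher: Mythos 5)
Your high-level architecture matches the paper's: an inner-outer gluing scheme with bubbles modeled on the half-harmonic map $\omega$, tangential linearization of the nonlinear Neumann condition into an operator $L_\omega$ whose bounded kernel is identified via the nondegeneracy result of \cite{sire2017nondegeneracy}, an inner linear theory in weighted $L^\infty$ norms, modulation equations from orthogonality against the translation and dilation modes ($Z_2$, $Z_3$), an outer half-space heat problem, and a Schauder fixed point. The concentration mass $2\pi$ from $\mathcal{E}(\omega,\R)=\pi$ (i.e. $\frac{1}{2}\int_{\R^2_+}|\nabla\omega|^2=\pi$) is also the right accounting.

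There are, however, two genuine gaps. First, your decomposition $u = U_*+\sum_i[\omega_i-\omega(\infty)]\eta_i+\phi$ lacks the paper's crucial intermediate correction $\Phi^0[\lambda,\xi]$: the leading part of $-U_t$ (the term $\mathcal{E}_0$ in (\ref{e:mathcale_0})) decays only like $1/r$ and is not square-integrable, so it cannot be absorbed by the outer heat problem. The paper builds $\Phi^0$ explicitly via Duhamel's formula for the radially symmetric heat equation in $\R^4$, $\psi(z,t)=\int_{-T}^t p(s)k(z,t-s)\,ds$ with $p=-2\dot\lambda$, and it is precisely the memory integral in this construction that produces the nonlocal kernels $\Gamma$, $\Gamma_b$, $\Gamma_0$ appearing in the reduced equation for $\lambda$. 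A vaguely chosen ``smooth global correction $U_*$'' does not do this job, and without $\Phi^0$ the outer scheme does not close in the stated weighted norms. Second, the schematic modulation equation you write, $c_0\,\dot\lambda\log\frac{1}{T-t}=c_1\,\lambda/(T-t)$, is inconsistent with the claimed rate: inserting $\lambda\sim\kappa(T-t)/|\log(T-t)|^b$ gives a left side of order $|\log(T-t)|^{1-b}$ and a right side of order $|\log(T-t)|^{-b}$, which never balance. The paper's actual reduced problem is $\dot\lambda + \int_{-T}^t p(s)\Gamma_0\!\left(\frac{\lambda^2}{t-s}\right)\frac{ds}{t-s} + 2b_2=0$ with $p=-2\dot\lambda$ and $b_2=\partial_x z^*_{02}(q,0)$; the rate emerges because the nonlocal term evaluates to $c\kappa_0(1+o(1))$ on the ansatz $\dot\lambda_0 = -\kappa_0|\log T|/\log^2(T-t)$, and the constant $\kappa_0>0$ is pinned by the freely chosen small outer datum $Z_0^*$ through $b_2$, a mechanism your proposal does not capture. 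You would need both the explicit $\Phi^0$ and the role of $Z^*$ to recover the correct reduced dynamics and hence the stated blow-up rate.
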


To prove this theorem, we will use the {\it inner-outer gluing scheme} which was proved to be useful in singular perturbation elliptic problems, for example, \cite{delpinoKWeiCPAM2007}, \cite{delwei2011Degiorgi}, \cite{delkowalczykweijdg2013entire}. This method has also been developed into various parabolic flows, for example, the infinite time blowing-up solutions for critical nonlinear heat equation \cite{cortazar2016green}, \cite{del2017infinite}, singularity formation for two dimensional harmonic map flow \cite{davila2017singularity}, type II ancient solution for Yamabe flow \cite{del2012type}.

Results similar to Theorem \ref{t:main} have been established by Davila, del Pino and the second author in \cite{davila2017singularity} in the case of two dimensional harmonic map flow into ${\mathbb S}^2$, see \cite{RaphaelAS} for earlier results in the corrotational case. Comparing with \cite{davila2017singularity}, the main difficulty in this paper is the {\em nonlocality} of the problem (\ref{e:main}). In fact, according to \cite{stinga2015regularity}, we can write problem (\ref{e:main}) as
\begin{equation*}
\sqrt{\partial_t-\Delta}u = \frac{1}{8\pi}\left[\int_{0}^{+\infty}\int_{\mathbb{R}}|u(x,0,t) - u(x-z,0,t-\tau)|^2\frac{e^{-\frac{|z|^2}{4\tau}}}{\tau^2}dzd\tau\right]u(x,0,t).
\end{equation*}
The problem under consideration interpolates between the two-dimensional harmonic map flow and the half-harmonic map flow. It inherits characteristics from both problems. In \cite{SWY} we showed that for the half-harmonic map flow, infinite time blow-up exists. In this paper we combine techniques from both papers \cite{davila2017singularity} and \cite{SWY} to prove finite time blow up for (\ref{e:main}), which is unknown even in the corrotational case. The flow under consideration is actually a reminiscence of a nonlocal geometric flow involving the operator $\sqrt{\partial_t -\Delta}$ described in \cite{stinga2015regularity}, as previously mentioned, and enjoys nice monotonicity properties (see for instance \cite{BGaro} for general considerations ). The techniques used in the present paper can also used to deal with infinite-time blow up for the flow
\begin{equation*}
\label{eq:up}
\begin{cases}
u_t-\Delta u=0\hbox{ in } \mathbb{R}^{m+1}_+\times (0,\infty),\\
-\lim_{y\to 0^+}\frac{\partial u(.,t)}{\partial y}=u^{p_*},
\end{cases}
\end{equation*}
where $p_*$ is the critical exponent for the Trace Sobolev embedding. We plan to come back to this problem later.

\section{Construction of the approximate solution}
From \cite{stinga2015regularity}, we know that problem (\ref{e:main}) is equivalent to
\begin{equation}\label{e:main2.1}
\begin{cases}
u_t = \Delta u\text{ in }\mathbb{R}^2_+\times (0, T),\\
u(x,0,t) \in \mathbb{S}^1\text{ for all }(x,0,t)\in \partial\mathbb{R}^2_+\times (0, T),\\
-\frac{du}{dy}(x,0,t) = \frac{1}{8\pi}\left[\int_{0}^{+\infty}\int_{\mathbb{R}}|u(x,0,t) - u(x-z,0,t-\tau)|^2\frac{e^{-\frac{|z|^2}{4\tau}}}{\tau^2}dzd\tau\right]u(x,0,t)\\ \quad\quad\quad\quad\quad\quad\quad\quad\quad\quad\quad\quad\quad\quad\quad\quad\quad\quad\quad\quad\quad\text{ for all }(x,0,t)\in \partial\mathbb{R}^2_+\times (0, T),\\
u(x, y, t) = u_0(x, y)\text{ for all }(x, y, t)\in \mathbb{R}^2_+\times (-\infty, 0].
  \end{cases}
\end{equation}
Note that we use the factor $\frac{1}{8\pi}$ to keep (\ref{e:main2.1}) agree with the half-harmonic map equation when $u$ is independent of $t$. Here and in the following, $\frac{du}{dy}(x,0,t)$ always means $\frac{du}{dy}|_{(x,0,t)}$.
\subsection{Setting up the problem.}
Our aim is to find a solution of (\ref{e:main2.1}) which looks like
\begin{equation*}
U(x, y, t): = U_{\lambda, \xi}(x, y) = \omega\left(\frac{x-\xi}{\lambda},\frac{y}{\lambda}\right)
\end{equation*}
at main order, where
$\omega(x,y) =
\begin{pmatrix}
     \frac{2x}{x^2+(y+1)^2}\\
     \frac{x^2+y^2-1}{x^2+(y+1)^2}
\end{pmatrix}$
is the extension form of the canonical least energy half-harmonic map (\ref{e:halfharmonicmapcanonical}).
We look for parameter functions $\lambda(t)$ and $\xi(t)$ of class $C^1$ satisfying
\begin{equation*}
\lim_{t\to T}\lambda(t) = 0,\quad \lim_{t\to T}\xi(t) = q\in \partial\mathbb{R}^2_+,
\end{equation*}
and a solution to (\ref{e:main2.1}) with form $u(x, y, t) = U(x, y, t) + \varphi(x, y, t)$ blowing up at $t = T$ and
the point $(q, 0)$. Here $\varphi(x, y, t)$ is a small perturbation term.

Note that problem (\ref{e:main2.1}) is also equivalent to
\begin{equation}\label{e:halfheatequation}
\sqrt{\partial_t-\Delta}u = \frac{1}{8\pi}\left[\int_{0}^{+\infty}\int_{\mathbb{R}}|u(x,0,t) - u(x-z,0,t-\tau)|^2\frac{e^{-\frac{|z|^2}{4\tau}}}{\tau^2}dzd\tau\right]u(x,0,t).
\end{equation}
for all $(x, 0, t)\in \partial\mathbb{R}^2_+\times (-\infty, T)$. We refer the interested readers to \cite{stinga2015regularity} for the definition of $\sqrt{\partial_t-\Delta}u$. Since $u(x,0,t)\in \mathbb{S}^1$, as in \cite{davila2017singularity},
we parameterize the admissible perturbation by free small functions $\varphi:\partial\mathbb{R}^2_+\times (-\infty,T)\to \mathbb{R}^2$ with the following form
\begin{equation*}
p(\varphi):=\Pi_{U^\perp}\varphi + a(\Pi_{U^\perp}\varphi)U,
\end{equation*}
where
$$
\Pi_{U^\perp}\varphi:=\varphi-(\varphi\cdot U)U,\quad a(\Pi_{U^\perp}\varphi):=\sqrt{1+(\varphi\cdot U)^2-|\varphi|^2}-1 = \sqrt{1-|\Pi_{U^\perp}\varphi|^2}-1,
$$
hence $|U+p(\varphi)|^2=1$ holds on $\partial\mathbb{R}^2_+\times (-\infty, T)$. Considering the error operator defined as
\begin{equation*}
\begin{aligned}
S(u) &= -\sqrt{\partial_t-\Delta}u\\
 &\quad + \frac{1}{8\pi}\left[\int_{0}^{+\infty}\int_{\mathbb{R}}|u(x,0,t) - u(x-z,0,t-\tau)|^2\frac{e^{-\frac{|z|^2}{4\tau}}}{\tau^2}dzd\tau\right]u(x,0,t),
\end{aligned}
\end{equation*}
a useful observation is that if $\varphi$ solves
\begin{equation}\label{e:balanceequation}
S(U+\Pi_{U^\perp}\varphi + a(\Pi_{U^\perp})U)+\tilde{b}(x,0,t)U = 0
\end{equation}
for some scalar function $\tilde{b}(x,0,t)$ and $|\varphi| < \frac{1}{2}$, then $u=U+\Pi_{U^\perp}\varphi + a(\Pi_{U^\perp}\varphi)U$ satisfies (\ref{e:halfheatequation}), that is to say, $S(U+\Pi_{U^\perp}\varphi + a(\Pi_{U^\perp})U) = 0$. Indeed, since $|u| = 1$, $-\tilde{b}(x,t)U\cdot u = S(u)\cdot u = 0$.
On the other hand, since $|\varphi|\leq \frac{1}{2}$, $|a(\Pi_{U^\perp}\varphi)|\leq \frac{1}{4}$, thus $U\cdot u = 1 + a(\Pi_{U^\perp}\varphi) > 0$ and therefore $\tilde{b}\equiv 0$. Hence we only need to solve (\ref{e:balanceequation}). Equivalently, we will find $\varphi:\mathbb{R}^2_+\to \mathbb{R}$ such that
\begin{equation*}
\begin{cases}
&(U + \varphi)_t = \Delta (U + \varphi)\text{ in }\mathbb{R}^2_+\times (0, T),\\
&-\frac{d(U + \varphi)}{dy}(x,0,t)= \\
&\quad \left(\frac{1}{8\pi}\left[\int_{0}^{+\infty}\int_{\mathbb{R}}|u(x,0,t) - u(x-z,0,t-\tau)|^2\frac{e^{-\frac{|z|^2}{4\tau}}}{\tau^2}dzd\tau\right]u(x,0,t)\right)\Bigg|_{u = U+\varphi}\\ &\quad
\left(U + \varphi\right)+\tilde{b}(x, 0, t)U\text{ for all }(x,0,t)\in \partial\mathbb{R}^2_+\times (0, T)
\end{cases}
\end{equation*}
holds. Let us define the error operators as
\begin{eqnarray*}
S_1(u) = -u_t +\Delta u \text{ in }\mathbb{R}^2_+\times (0, T),
\end{eqnarray*}
\begin{equation*}
\begin{aligned}
S_2(u) & = \frac{du}{dy}(x,0,t)\\ &\quad +\frac{1}{8\pi}\left[\int_{0}^{+\infty}\int_{\mathbb{R}}|u(x,0,t) - u(x-z,0,t-\tau)|^2\frac{e^{-\frac{|z|^2}{4\tau}}}{\tau^2}dzd\tau\right]u(x,0,t)
\end{aligned}
\end{equation*}
in $(x,0,t)\in \partial\mathbb{R}^2_+\times (0, T)$.
For each fixed $t$, since $U$ is a half-harmonic map, we have
\begin{equation*}
\begin{cases}
\Delta U = 0\text{ in }\mathbb{R}^2_+,\\
-\frac{dU}{dy}(x, 0, t) = \frac{1}{8\pi}\left[\int_{0}^{+\infty}\int_{\mathbb{R}}|U(x,0,t) - U(x-z,0,t)|^2\frac{e^{-\frac{|z|^2}{4\tau}}}{\tau^2}dzd\tau\right]U(x,0,t) \text{ in }\partial\mathbb{R}^2_+.
  \end{cases}
\end{equation*}
Hence $S_1(U) = -U_t$ and
\begin{equation*}
\begin{aligned}
S_2(U) & = \Bigg\{\frac{1}{8\pi}\left[\int_{0}^{+\infty}\int_{\mathbb{R}}|U(x,0,t) - U(x-z,0,t-\tau)|^2\frac{e^{-\frac{|z|^2}{4\tau}}}{\tau^2}dzd\tau\right]\\
&   \quad - \frac{1}{8\pi}\left[\int_{0}^{+\infty}\int_{\mathbb{R}}|U(x,0,t) - U(x-z,0,t)|^2\frac{e^{-\frac{|z|^2}{4\tau}}}{\tau^2}dzd\tau\right]\Bigg\}U(x,0,t)
\end{aligned}
\end{equation*}

Now we compute
\begin{equation}\label{e:error}
\begin{aligned}
0 = S_1(U+\varphi) &= -U_t -\partial_t\varphi + \Delta\varphi
\end{aligned}
\end{equation}
and
\begin{equation}\label{e:error1}
\begin{aligned}
0 = S_2(U+\varphi) & = S_2(U+\Pi_{U^\perp}\varphi + aU)\\
& = \frac{d}{dy}\varphi(x, 0, \tau) + L_U(\Pi_{U^\perp}\varphi)+N_U(\Pi_{U^\perp}\varphi)+b(\Pi_{U^\perp}\varphi)U
\end{aligned}
\end{equation}
where
\begin{equation*}
\begin{aligned}
&L_U(\Pi_{U^\perp}\varphi)\\
& = \frac{1}{8\pi}\left[\int_{0}^{+\infty}\int_{\mathbb{R}}|U(x,0,t) - U(x-z,0,t-\tau)|^2\frac{e^{-\frac{|z|^2}{4\tau}}}{\tau^2}dzd\tau\right]\Pi_{U^\perp}\varphi\\
&\quad + \frac{1}{4\pi}\left[\int_{0}^{+\infty}\int_{\mathbb{R}}(U(x,0,t)-U(x-z,0,t-\tau))\right.\\
&\quad\quad\quad\quad\quad\quad\quad\left.\cdot\left(\Pi_{U^\perp}\varphi(x,0,t) -\Pi_{U^\perp}\varphi(x-z,0,t-\tau)\right)\frac{e^{-\frac{|z|^2}{4\tau}}}{\tau^2}dzd\tau\right]U(x, 0, t),
\end{aligned}
\end{equation*}
\begin{eqnarray*}
&&N_U(\Pi_{U^\perp}\varphi)\\ &&= \left(\frac{1}{4\pi}\int_{0}^{+\infty}\int_{\mathbb{R}}(a(x,0,t)U(x,0,t)-a(x-z,0,t-\tau)U(x-z,0,t-\tau))\right.\\
&&\quad\cdot(U(x,0,t)+\Pi_{U^\perp}\varphi(x,0,t)-U(x-z,0,t-\tau) -\Pi_{U^\perp}\varphi(x-z,0,t-\tau))\\
&&\quad\quad\quad\quad\quad\quad\quad\quad\quad\quad\quad\quad\quad\quad\quad\quad\quad\quad\quad\quad\quad\quad\quad\quad\quad\quad\times
\frac{e^{-\frac{|z|^2}{4\tau}}}{\tau^2}dzd\tau\\ &&\quad\left.+\frac{1}{4\pi}\int_{0}^{+\infty}\int_{\mathbb{R}}(U(x,0,t)-U(x-z,0,t-\tau))\right.\\
&&\quad\quad\quad\quad\quad\quad\quad\quad\quad\quad\quad
\left.\cdot(\Pi_{U^\perp}\varphi(x,0,t) -\Pi_{U^\perp}\varphi(x-z,0,t-\tau))\frac{e^{-\frac{|z|^2}{4\tau}}}{\tau^2}dzd\tau\right.\\
&&\quad\left. +\frac{1}{8\pi}\int_{0}^{+\infty}\int_{\mathbb{R}}(\Pi_{U^\perp}\varphi(x,0,t)-\Pi_{U^\perp}\varphi(x-z,0,t-\tau))\right.\\
&&\quad\quad\quad\quad\quad\quad\quad\quad\quad\quad\quad
\left.\cdot(\Pi_{U^\perp}\varphi(x,0,t) -\Pi_{U^\perp}\varphi(x-z,0,t-\tau))\frac{e^{-\frac{|z|^2}{4\tau}}}{\tau^2}dzd\tau\right.\\
&&\quad \left.+\frac{1}{8\pi}\int_{0}^{+\infty}\int_{\mathbb{R}}(a(x,0,t)U(x,0,t)-a(x-z,0,t-\tau)U(x-z,0,t-\tau))\right.\\
&&\quad
\left.\cdot(a(x,0,t)U(x,0,t)-a(x-z,0,t-\tau)U(x-z,0,t-\tau))\frac{e^{-\frac{|z|^2}{4\tau}}}{\tau^2}dzd\tau\right)\Pi_{U^\perp}\varphi
\end{eqnarray*}
and
\begin{equation*}
\begin{aligned}
& b(\Pi_{U^\perp}\varphi) =  \left(\frac{1}{8\pi}\int_{0}^{+\infty}\int_{\mathbb{R}}|(U+\Pi_{U^\perp}\varphi + a(\Pi_{U^\perp}\varphi)U)(x,0,t)\right. \\
&\quad \left.-(U+\Pi_{U^\perp}\varphi + a(\Pi_{U^\perp}\varphi)U)(x-z,0,t-\tau)|^2\frac{e^{-\frac{|z|^2}{4\tau}}}{\tau^2}dyd\tau\right)(1+a)\\
\quad\quad\quad &\quad - \frac{1}{4\pi}\int_{0}^{+\infty}\int_{\mathbb{R}}(U(x,0,t)-U(x-z,0,t-\tau))\\
&\quad\quad\quad\quad\quad\quad\quad\quad\quad
\cdot(\Pi_{U^\perp}\varphi(x,0,t) -\Pi_{U^\perp}\varphi(x-z,0,t-\tau))\frac{e^{-\frac{|z|^2}{4\tau}}}{\tau^2}dyd\tau\\
&\quad - \frac{1}{8\pi}\left[\int_{0}^{+\infty}\int_{\mathbb{R}}|U(x,0,t) - U(x-z,0,t)|^2\frac{e^{-\frac{|z|^2}{4\tau}}}{\tau^2}dzd\tau\right].
\end{aligned}
\end{equation*}

By direct computations, we have
\begin{eqnarray*}
S_1(U) = -U_t = -\mathcal{E}_0(x, y, t) - \mathcal{E}_1(x, y, t).
\end{eqnarray*}
Here
\begin{equation}\label{e:mathcale_0}
\mathcal{E}_0(x,y,t) = \begin{pmatrix}
     \frac{2(x-\xi)[(x-\xi)^2+(y^2-\lambda^2)]}{((x-\xi)^2+(y+\lambda)^2)^2}\\
     \frac{-2y(y+\lambda)^2 -2(x-\xi)^2(y+2\lambda)}{((x-\xi)^2+(y+\lambda)^2)^2}
\end{pmatrix}\dot{\lambda} \approx
\begin{pmatrix}
     \frac{2(x-\xi)}{(x-\xi)^2+y^2+\lambda^2}\\
     \frac{-2y}{(x-\xi)^2+y^2+\lambda^2}
\end{pmatrix}\dot{\lambda},
\end{equation}
\begin{equation*}
\mathcal{E}_1(x,y,t) = \begin{pmatrix}
     \frac{2\lambda(x+y+\lambda-\xi)(x-y-\lambda-\xi)}{((x-\xi)^2+(y+\lambda)^2)^2}\\
     \frac{-4\lambda(x-\xi)(y+\lambda)}{((x-\xi)^2+(y+\lambda)^2)^2}
\end{pmatrix}\dot{\xi}.
\end{equation*}
Since $\mathcal{E}_0(x,y,t)$ is not $L^2$ integrable, we shall decompose the correction $\varphi$ into $\varphi = \Phi^* + \Phi$ and system (\ref{e:error}), (\ref{e:error1}) transforms into the following
\begin{equation}\label{e:recast1}
\begin{aligned}
0=S_1(U+\varphi) &= -U_t -\partial_t\Phi^* + \Delta\Phi^* -\partial_t\Phi + \Delta\Phi
\end{aligned}
\end{equation}
and
\begin{equation}\label{e:recast2}
\begin{aligned}
 0 & = \frac{d}{dy}\Phi^*(x, 0, \tau) + L_U(\Pi_{U^\perp}\Phi^*)\\
&\quad + \frac{d}{dy}\Phi(x, 0, \tau) + L_U(\Pi_{U^\perp}\Phi) +N_U(\Pi_{U^\perp}(\Phi^*+\Phi))+\tilde{b}(x,0,t)U.
\end{aligned}
\end{equation}
The correction $\Phi^*$ will be chosen such that the term $-\mathcal{E}_0$ is canceled at main order away from the blow up point $(\xi, 0)$.
\subsection{The definition of $\Phi^*$.}
Let us consider the linear problem (\ref{e:recast1}),
\begin{equation*}
0 = -\partial_t\Phi + \Delta\Phi + \mathcal{E}^*_1
\end{equation*}
where
\begin{equation*}
\mathcal{E}^*_1 = -U_t -\partial_t\Phi^* + \Delta\Phi^*.
\end{equation*}
Our aim is to construct a function $\Phi^*$ such that $\mathcal{E}^*_1$ is smaller than the largest term $-\mathcal{E}_0$ of the initial error $-U_t$ given by (\ref{e:mathcale_0}) away from the blow-up point $q$.

As in \cite{davila2017singularity}, we decompose $\Phi^*$ into the following form
\begin{equation*}
\Phi^*:=\Phi^0[\lambda,\xi] + Z^*(x,y,t)
\end{equation*}
where
\begin{equation*}
Z^*(x,y,t)= \begin{pmatrix}
     z^*_1(x,y,t) \\
     z^*_2(x,y,t)
\end{pmatrix}
\end{equation*}
is a solution of the heat equation
\begin{equation*}
\left\{\begin{array}{ll}
        \partial_t Z^* = \Delta Z^*\text{ in }\mathbb{R}^2_+\times (0, T),\\
        -\frac{d}{dy}Z^*(x,0,t) = 0\text{ in }\partial\mathbb{R}^2_+\times (0, T),\\
        Z^*(x, y, 0) = Z^*_0(x, y)
       \end{array}
\right.
\end{equation*}
independent of the parameter functions. Further assumptions on $Z^*_0$ will be given in subsection 2.5. $\Phi^0[\lambda,\xi]$ is an explicit function satisfying
\begin{equation}\label{e:approximationphi}
-\mathcal{E}_0 -\partial_t\Phi^* + \Delta\Phi^*\approx 0.
\end{equation}
Observe that if $\phi^0$ is a solution to
$$-\phi^0_t + \Delta \phi^0  - \begin{pmatrix}
     \frac{2(x-\xi)}{(x-\xi)^2+y^2+\lambda^2}\\
     \frac{-2y}{(x-\xi)^2+y^2+\lambda^2}
\end{pmatrix}\dot{\lambda} = 0,$$
then $\Phi^0 = \phi^0$ will satisfy (\ref{e:approximationphi}). Set
$$
p(t) = -2\dot{\lambda},
$$
$$
z(r) = \sqrt{(x-\xi)^2 + y^2 + \lambda^2},
$$
$$
\phi^0(x, y, t) = \begin{pmatrix}x-\xi\\-y\end{pmatrix}\psi(z(r),t).
$$
Then $\psi(z, t)$ satisfies
$$
\psi_t = \psi_{zz} + \frac{3\psi_z}{z} + \frac{p(t)}{z^2}
$$
which is the radially symmetric form of an inhomogeneous heat equation in $\mathbb{R}^4$. Then Duhamel's formula gives the following expression for a weak solution
$$
\psi(z, t) = \int_{-T}^tp(s)k(z, t-s)ds, \quad k(z, t) = \frac{1-e^{-\frac{z^2}{4t}}}{z^2},
$$
where $p(t)$ is also defined for negative values of $t$ by setting $p(t) = -2\dot{\lambda}(0)$ for $t\in [-T, 0)$. Now we define
$$
\Phi^0 = \begin{pmatrix}
     \varphi^0\\
     \varphi^1
\end{pmatrix},
$$
$$
\varphi^0 = (x-\xi)\int_{-T}^tp(s)k(z(r), t-s)ds
$$
and
$$
\varphi^1 = -y\int_{-T}^tp(s)k(z(r), t-s)ds.
$$
Now, we compute
\begin{equation*}
-\Phi^0_t = \begin{pmatrix}
\begin{aligned} - (x-\xi)\psi_t & + \dot{\xi}\int_{-T}^tp(s)k(z(r), t-s)ds\\ &+ \frac{x-\xi}{z^2}\int_{-T}^tp(s)zk_z(z(r), t-s)ds\left[(x-\xi)\dot{\xi} - \dot{\lambda}\lambda\right]\end{aligned}\\
y\psi_t - \frac{y}{z^2}\int_{-T}^tp(s)zk_z(z(r), t-s)ds\left[(x-\xi)\dot{\xi} - \dot{\lambda}\lambda\right]
\end{pmatrix},
\end{equation*}
\begin{equation*}
\Delta \Phi^0 =
\begin{pmatrix}
\begin{aligned}3(x-\xi)\int_{-T}^tp(s)k_z\frac{1}{z}ds &+ (x-\xi)\int_{-T}^tp(s)k_{zz}ds\\ &+ (x-\xi)\frac{\lambda^2}{z^4}\int_{-T}^tp(s)[k_zz-k_{zz}z^2]ds\end{aligned}\\
-3y\int_{-T}^tp(s)k_z\frac{1}{z}ds -y\int_{-T}^tp(s)k_{zz}ds -y\frac{\lambda^2}{z^4}\int_{-T}^tp(s)[k_zz-k_{zz}z^2]ds
\end{pmatrix}.
\end{equation*}
Therefore, we have
$$
-\Phi^0_t + \Delta \Phi^0 = \tilde{\mathcal{R}}_0 + \tilde{\mathcal{R}}_1,
$$
where
\begin{equation*}
\tilde{\mathcal{R}}_0 = \begin{pmatrix}-(x-\xi)\frac{p(t)}{z^2} + \frac{(x-\xi)\lambda^2}{z^4}\int_{-T}^tp(s)[zk_z - z^2k_{zz}]ds\\
y\frac{p(t)}{z^2} - \frac{y\lambda^2}{z^4}\int_{-T}^tp(s)[zk_z - z^2k_{zz}]ds
\end{pmatrix},
\end{equation*}
\begin{equation*}
\tilde{\mathcal{R}}_1 = \begin{pmatrix}\dot{\xi}\int_{-T}^tp(s)kds + \frac{(x-\xi)}{z^2}\left[(x-\xi)\dot{\xi} - \dot{\lambda}\lambda\right]\int_{-T}^tp(s)zk_zds\\
-\frac{y}{z^2}\left[(x-\xi)\dot{\xi} - \dot{\lambda}\lambda\right]\int_{-T}^tp(s)zk_zds
\end{pmatrix}.
\end{equation*}
\subsection{Estimate of the inner error.}
Now we compute the inner error $\mathcal{E}^*_1 := -\Phi^*_t + \Delta\Phi^* - U_t$ as
\begin{equation*}
\begin{aligned}
\mathcal{E}^*_1 &= -\Phi^*_t + \Delta\Phi^* - U_t\\
& =  -[\Phi^0 + Z^*]_t + \Delta [\Phi^0 + Z^*] - U_t\\
& = \tilde{\mathcal{R}}_0 + \tilde{\mathcal{R}}_1 - U_t\\
& =  \begin{pmatrix}-(x-\xi)\frac{p(t)}{z^2} + \frac{(x-\xi)\lambda^2}{z^4}\int_{-T}^tp(s)[zk_z - z^2k_{zz}]ds\\
y\frac{p(t)}{z^2} - \frac{y\lambda^2}{z^4}\int_{-T}^tp(s)[zk_z - z^2k_{zz}]ds
\end{pmatrix}\\ &\quad + \begin{pmatrix}\dot{\xi}\int_{-T}^tp(s)kds + \frac{(x-\xi)}{z^2}\left[(x-\xi)\dot{\xi} - \dot{\lambda}\lambda\right]\int_{-T}^tp(s)zk_zds\\
-\frac{y}{z^2}\left[(x-\xi)\dot{\xi} - \dot{\lambda}\lambda\right]\int_{-T}^tp(s)zk_zds
\end{pmatrix} \\
&\quad - \begin{pmatrix}
     \frac{2(x-\xi)[(x-\xi)^2+(y^2-\lambda^2)]}{((x-\xi)^2+(y+\lambda)^2)^2}\\
     \frac{-2y(y+\lambda)^2 -2(x-\xi)^2(y+2\lambda)}{((x-\xi)^2+(y+\lambda)^2)^2}
\end{pmatrix}\dot{\lambda}  - \begin{pmatrix}
     \frac{2\lambda(x+y+\lambda-\xi)(x-y-\lambda-\xi)}{((x-\xi)^2+(y+\lambda)^2)^2}\\
     \frac{-4\lambda(x-\xi)(y+\lambda)}{((x-\xi)^2+(y+\lambda)^2)^2}
\end{pmatrix}\dot{\xi}\\
& =  \begin{pmatrix}2(x-\xi)\frac{\dot{\lambda}}{z^2} + \frac{(x-\xi)\lambda^2}{z^4}\int_{-T}^tp(s)[zk_z - z^2k_{zz}]ds\\
-2y\frac{\dot{\lambda}}{z^2} - \frac{y\lambda^2}{z^4}\int_{-T}^tp(s)[zk_z - z^2k_{zz}]ds
\end{pmatrix}\\ &\quad + \begin{pmatrix}\dot{\xi}\int_{-T}^tp(s)kds + \frac{(x-\xi)}{z^2}\left[(x-\xi)\dot{\xi} - \dot{\lambda}\lambda\right]\int_{-T}^tp(s)zk_zds\\
-\frac{y}{z^2}\left[(x-\xi)\dot{\xi} - \dot{\lambda}\lambda\right]\int_{-T}^tp(s)zk_zds
\end{pmatrix} \\
&\quad - \begin{pmatrix}
     \frac{2(x-\xi)[(x-\xi)^2+(y^2-\lambda^2)]}{((x-\xi)^2+(y+\lambda)^2)^2}\\
     \frac{-2y(y+\lambda)^2 -2(x-\xi)^2(y+2\lambda)}{((x-\xi)^2+(y+\lambda)^2)^2}
\end{pmatrix}\dot{\lambda}  - \begin{pmatrix}
     \frac{2\lambda(x+y+\lambda-\xi)(x-y-\lambda-\xi)}{((x-\xi)^2+(y+\lambda)^2)^2}\\
     \frac{-4\lambda(x-\xi)(y+\lambda)}{((x-\xi)^2+(y+\lambda)^2)^2}
\end{pmatrix}\dot{\xi},
\end{aligned}
\end{equation*}
hence
\begin{equation*}
\begin{aligned}
\mathcal{E}^*_1
& = \dot{\lambda}\begin{pmatrix}
     \frac{2(x-\xi)}{z^2} -\frac{2(x-\xi)[(x-\xi)^2+(y^2-\lambda^2)]}{((x-\xi)^2+(y+\lambda)^2)^2}\\
     \frac{-2y}{z^2} + \frac{2y(y+\lambda)^2 +2(x-\xi)^2(y+2\lambda)}{((x-\xi)^2+(y+\lambda)^2)^2}
\end{pmatrix} - \dot{\xi}\begin{pmatrix}
     \frac{2\lambda(x+y+\lambda-\xi)(x-y-\lambda-\xi)}{((x-\xi)^2+(y+\lambda)^2)^2}\\
     \frac{-4\lambda(x-\xi)(y+\lambda)}{((x-\xi)^2+(y+\lambda)^2)^2}
\end{pmatrix}\\
&\quad +\frac{\lambda^2}{z^4}\int_{-T}^tp(s)[zk_z - z^2k_{zz}]ds\begin{pmatrix}x-\xi\\
- y
\end{pmatrix}\\
&\quad + \frac{\left[(x-\xi)\dot{\xi} - \dot{\lambda}\lambda\right]}{z^2}\int_{-T}^tp(s)zk_zds\begin{pmatrix}x-\xi\\
- y
\end{pmatrix} + \int_{-T}^tp(s)kds \begin{pmatrix}\dot{\xi}\\
0
\end{pmatrix}\\
& =  \dot{\lambda}\begin{pmatrix}
     \frac{2(x-\xi)}{z^2} -\frac{2(x-\xi)[(x-\xi)^2+(y^2-\lambda^2)]}{((x-\xi)^2+(y+\lambda)^2)^2}\\
     \frac{-2y}{z^2} + \frac{2y(y+\lambda)^2 +2(x-\xi)^2(y+2\lambda)}{((x-\xi)^2+(y+\lambda)^2)^2}
\end{pmatrix} - \dot{\xi}\begin{pmatrix}
     \frac{2\lambda(x+y+\lambda-\xi)(x-y-\lambda-\xi)}{((x-\xi)^2+(y+\lambda)^2)^2}\\
     \frac{-4\lambda(x-\xi)(y+\lambda)}{((x-\xi)^2+(y+\lambda)^2)^2}
\end{pmatrix}\\
&\quad + \frac{\rho}{\lambda(\rho^2+1)^2}\int_{-T}^tp(s)[zk_z - z^2k_{zz}]ds\begin{pmatrix}\frac{x-\xi}{r}\\
- \frac{y}{r}
\end{pmatrix}\\
&\quad + \frac{\left[(x-\xi)\dot{\xi} - \dot{\lambda}\lambda\right]r}{z^2}\int_{-T}^tp(s)zk_zds\begin{pmatrix}\frac{x-\xi}{r}\\
- \frac{y}{r}
\end{pmatrix} + \int_{-T}^tp(s)kds \begin{pmatrix}\dot{\xi}\\
0
\end{pmatrix}.
\end{aligned}
\end{equation*}
Here we have used the notations $r = \sqrt{(x-\xi)^2 + y^2}$, $\rho = r/\lambda$ and $r\lambda^2/z^4 = \frac{\rho}{\lambda(\rho^2+1)^2}$.
Furthermore, we have
\begin{equation*}
\begin{aligned}
\mathcal{E}^*_1
& = \dot{\lambda}\begin{pmatrix}
     \frac{2(x-\xi)}{z^2} -\frac{2(x-\xi)[(x-\xi)^2+(y^2-\lambda^2)]}{((x-\xi)^2+(y+\lambda)^2)^2}\\
     \frac{-2y}{z^2} + \frac{2y(y+\lambda)^2 +2(x-\xi)^2(y+2\lambda)}{((x-\xi)^2+(y+\lambda)^2)^2}
\end{pmatrix} - \dot{\xi}\begin{pmatrix}
     \frac{2\lambda(x+y+\lambda-\xi)(x-y-\lambda-\xi)}{((x-\xi)^2+(y+\lambda)^2)^2}\\
     \frac{-4\lambda(x-\xi)(y+\lambda)}{((x-\xi)^2+(y+\lambda)^2)^2}
\end{pmatrix}\\
&\quad + \frac{\rho}{\lambda(\rho^2+1)^2}\int_{-T}^tp(s)[zk_z - z^2k_{zz}]ds\begin{pmatrix}\frac{x-\xi}{r}\\
- \frac{y}{r}
\end{pmatrix}\\
&\quad + \frac{\left[(x-\xi)\dot{\xi} - \dot{\lambda}\lambda\right]r}{z^2}\int_{-T}^tp(s)zk_zds\begin{pmatrix}\frac{x-\xi}{r}\\
- \frac{y}{r}
\end{pmatrix}+ \int_{-T}^tp(s)kds \begin{pmatrix}\dot{\xi}\\
0
\end{pmatrix}\\
& =  \dot{\lambda}\begin{pmatrix}\frac{2(x-\xi)}{r}\\
- \frac{2y}{r}
\end{pmatrix}r\left[\frac{1}{(x-\xi)^2+y^2+\lambda^2}- \frac{(x-\xi)^2+y^2}{\left[(x-\xi)^2+y^2+\lambda^2\right]^2}\right]\\
&\quad + \dot{\lambda}\begin{pmatrix}
     2(x-\xi)\frac{(x-\xi)^2+y^2}{\left[(x-\xi)^2+y^2+\lambda^2\right]^2} -\frac{2(x-\xi)[(x-\xi)^2+(y^2-\lambda^2)]}{((x-\xi)^2+(y+\lambda)^2)^2}\\
     -2y\frac{(x-\xi)^2+y^2}{\left[(x-\xi)^2+y^2+\lambda^2\right]^2} + \frac{2y(y+\lambda)^2 +2(x-\xi)^2(y+2\lambda)}{((x-\xi)^2+(y+\lambda)^2)^2}
\end{pmatrix}\\
&\quad - \dot{\xi}\begin{pmatrix}
     \frac{2\lambda(x+y+\lambda-\xi)(x-y-\lambda-\xi)}{((x-\xi)^2+(y+\lambda)^2)^2}\\
     \frac{-4\lambda(x-\xi)(y+\lambda)}{((x-\xi)^2+(y+\lambda)^2)^2}
\end{pmatrix}\\
&\quad +\frac{\rho}{\lambda(\rho^2+1)^2}\int_{-T}^tp(s)[zk_z - z^2k_{zz}]ds\begin{pmatrix}\frac{x-\xi}{r}\\
- \frac{y}{r}
\end{pmatrix}\\
&\quad + \frac{\left[(x-\xi)\dot{\xi} - \dot{\lambda}\lambda\right]r}{z^2}\int_{-T}^tp(s)zk_zds\begin{pmatrix}\frac{x-\xi}{r}\\
- \frac{y}{r}
\end{pmatrix} + \int_{-T}^tp(s)kds \begin{pmatrix}\dot{\xi}\\
0
\end{pmatrix}\\
& =  \dot{\lambda}\frac{\rho}{\lambda(\rho^2+1)^2}\begin{pmatrix}\frac{2(x-\xi)}{r}\\
- \frac{2y}{r}
\end{pmatrix}\\
&\quad + \dot{\lambda}\begin{pmatrix}
     2(x-\xi)\frac{(x-\xi)^2+y^2}{\left[(x-\xi)^2+y^2+\lambda^2\right]^2} -\frac{2(x-\xi)[(x-\xi)^2+(y^2-\lambda^2)]}{((x-\xi)^2+(y+\lambda)^2)^2}\\
     -2y\frac{(x-\xi)^2+y^2}{\left[(x-\xi)^2+y^2+\lambda^2\right]^2} + \frac{2y(y+\lambda)^2 +2(x-\xi)^2(y+2\lambda)}{((x-\xi)^2+(y+\lambda)^2)^2}
\end{pmatrix}\\
&\quad - \dot{\xi}\begin{pmatrix}
     \frac{2\lambda(x+y+\lambda-\xi)(x-y-\lambda-\xi)}{((x-\xi)^2+(y+\lambda)^2)^2}\\
     \frac{-4\lambda(x-\xi)(y+\lambda)}{((x-\xi)^2+(y+\lambda)^2)^2}
\end{pmatrix}\\
&\quad + \frac{\rho}{\lambda(\rho^2+1)^2}\int_{-T}^tp(s)[zk_z - z^2k_{zz}]ds\begin{pmatrix}\frac{x-\xi}{r}\\
- \frac{y}{r}
\end{pmatrix}\\
&\quad + \frac{\left[(x-\xi)\dot{\xi} - \dot{\lambda}\lambda\right]r}{z^2}\int_{-T}^tp(s)zk_zds\begin{pmatrix}\frac{x-\xi}{r}\\
- \frac{y}{r}
\end{pmatrix} + \int_{-T}^tp(s)kds \begin{pmatrix}\dot{\xi}\\
0
\end{pmatrix}.
\end{aligned}
\end{equation*}

\subsection{Estimate of the boundary error.}
Equation (\ref{e:recast2}) can be approximated by the following linear problem
\begin{equation*}
0 = \mathcal{E}^*_2 + \frac{d}{dy}\Phi(x, 0, \tau) + L_U(\Pi_{U^\perp}\Phi) +N_U(\Pi_{U^\perp}(\Phi^*+\Phi))+\tilde{b}(x,0,t)U,
\end{equation*}
where
\begin{equation*}
\mathcal{E}^*_2 = \frac{d}{dy}\Phi^*(x, 0, \tau) + L_U(\Pi_{U^\perp}\Phi^*).
\end{equation*}
Now we compute the boundary error $\mathcal{E}^*_2$ with $\Phi^* = \Phi^0 + Z^*$.
First, we have
\begin{equation*}
\begin{aligned}
&\frac{d}{dy}\left[\Phi^0 + Z^*\right] = \frac{d}{dy}\Phi^0\\
& = \frac{d}{dy}\begin{pmatrix}
     (x-\xi)\int_{-T}^tp(s)k(z(r), t-s)ds\\
     -y\int_{-T}^tp(s)k(z(r), t-s)ds
\end{pmatrix}\\
& = \begin{pmatrix}
     0\\
     -\int_{-T}^tp(s)k(z(r), t-s)|_{y=0}ds
\end{pmatrix}\\
& = \begin{pmatrix}
     0\\
     -\int_{-T}^tp(s)k(\sqrt{(x-\xi)^2 + \lambda^2}, t-s)ds
\end{pmatrix}.
\end{aligned}
\end{equation*}
Then, when $T > 0$ is sufficiently small, there holds
\begin{equation*}
\begin{aligned}
\mathcal{E}^*_2 &= \frac{d}{dy}\Phi^0(x,0,t)\\
&\quad  + \frac{1}{8\pi}\left[\int_{0}^{+\infty}\int_{\mathbb{R}}|U(x,0,t) - U(x-z,0,t-\tau)|^2\frac{e^{-\frac{|z|^2}{4\tau}}}{\tau^2}dzd\tau\right]\Pi_{U^\perp}[\Phi^*]\\
&\quad  + \frac{1}{4\pi}\int_{0}^{+\infty}\int_{\mathbb{R}}(U(x,0,t)-U(x-z,0,t-\tau))\\
&\quad\quad\quad \cdot (\Pi_{U^\perp}[\Phi^*](x,0,t)-\Pi_{U^\perp}[\Phi^*](x-z,0,t-\tau))\frac{e^{-\frac{|z|^2}{4\tau}}}{\tau^2}dzd\tau U\left(x, 0, t\right)\\
& \approx  \begin{pmatrix}
     0\\
     -\int_{-T}^tp(s)k(\sqrt{(x-\xi)^2 + \lambda^2}, t-s)ds
\end{pmatrix}\Bigg|_{y = \frac{x-\xi}{\lambda}}\\
&\quad  +
\frac{2}{1+y^2}\begin{pmatrix}
y\int_{-T}^tp(s)k(\sqrt{\lambda^2 y^2 + \lambda^2}, t-s)ds\\
0
\end{pmatrix}\Bigg|_{y = \frac{x-\xi}{\lambda}}\\
&\quad  +
\frac{2}{1+y^2}\frac{1}{\lambda}\begin{pmatrix}
z_1^*(\xi+\lambda y,0,t)\\
z_2^*(\xi+\lambda y,0,t)
\end{pmatrix}\Bigg|_{y = \frac{x-\xi}{\lambda}} + b(x, 0, t)U(x, 0, t)
\end{aligned}
\end{equation*}
for some scalar function $b(x, 0, t)$ which depends on $\varphi(x, 0, t)$.

\subsection{Improve error near the blow up point: choice of $\lambda$ and $\xi$.}
System (\ref{e:recast1}) and (\ref{e:recast2}) can be approximated by the following linear problem
\begin{equation}\label{e:linearequation1}
\begin{aligned}
0= -\partial_t\varphi + \Delta\varphi + \mathcal{E}^*_1
\end{aligned}
\end{equation}
and
\begin{equation}\label{e:linearequation2}
\begin{aligned}
0 = \frac{d}{dy}\varphi(x,0,t) + L_U(\varphi) + \mathcal{E}^*_2 + b(x,0,t)U,\quad \varphi(x, 0, t)\cdot U(x, 0, t) = 0.
\end{aligned}
\end{equation}
A choice of the parameter functions is possible when suitable conditions $Z^*_0(x,y)$ are assumed.
For a point $(q, 0)\in\partial\mathbb{R}^2_+$ and a smooth function
\begin{equation*}
\tilde{Z}_0(x,y) = \begin{pmatrix}
\tilde{z}_{01}(x,y)\\ \tilde{z}_{02}(x,y)
\end{pmatrix}
\end{equation*}
satisfying
\begin{equation*}
\tilde{Z}_0(q, 0) = \begin{pmatrix}0\\ 0\end{pmatrix},\quad \partial_x \tilde{z}_{02}(q, 0) < 0,
\end{equation*}
we define
\begin{equation*}
Z_0^*:=\delta \tilde{Z}_0(x,y) = \begin{pmatrix}
z^*_{01}(x,y)\\ z^*_{02}(x,y)
\end{pmatrix}
\end{equation*}
for a fixed but small number $\delta > 0$.

If we write
$$
\varphi(x,y,t) = \phi(u, v, t),\quad u = \frac{x-\xi}{\lambda},\quad v = \frac{y}{\lambda},
$$
then (\ref{e:linearequation1}) and (\ref{e:linearequation2}) becomes
\begin{equation*}
\begin{aligned}
0= -\lambda^2\partial_t\phi + \Delta\phi + \lambda^2\mathcal{E}^*_1
\end{aligned}
\end{equation*}
and
\begin{equation*}
\begin{aligned}
0 = \frac{d}{dv}\phi(u,0,t) + L_\omega(\phi) + \lambda\mathcal{E}^*_2 + b(u, 0, t)\omega,\quad \phi\cdot \omega = 0.
\end{aligned}
\end{equation*}
Then an improvement of the approximation can be achieved if the following time-independent problem
\begin{equation}\label{e:linearequation11}
\begin{aligned}
0= \Delta\phi + \lambda^2\mathcal{E}^*_1,
\end{aligned}
\end{equation}
\begin{equation}\label{e:linearequation12}
\begin{aligned}
0 = \frac{d}{dv}\phi(u,0) + L_\omega(\phi) + \lambda\mathcal{E}^*_2, \quad\phi\cdot \omega = 0
\end{aligned}
\end{equation}
and
\begin{equation}\label{e:linearequation13}
\lim_{|(u, v)|\to \infty}\phi(u, v) = 0\text{ in }\mathbb{R}^2_+
\end{equation}
is satisfied approximately. Note that the decay condition (\ref{e:linearequation13}) is needed to not essentially modify the size of error far away from $(q, 0)$.

\subsubsection{Nondegeneracy of the half harmonic maps.} It was proved in \cite{sire2017nondegeneracy} that $\omega$ is nondegenerate, which is a crucial ingredient in the singularity formation problem of half-harmonic map flow (\cite{SWY}). Observe that $\omega$ is invariant under dilation, translation and rotation, equivalently, for $Q=\begin{pmatrix}
     \cos\alpha & -\sin\alpha \\
     \sin\alpha & \cos\alpha
\end{pmatrix}\in O(2)$, $q\in \mathbb{R}$ and $\lambda\in\mathbb{R}^+$, the function
\begin{equation*}
Q\omega\left(\frac{x-q}{\lambda}\right) = \begin{pmatrix}
     \cos\alpha & -\sin\alpha \\
     \sin\alpha & \cos\alpha
\end{pmatrix}\omega\left(\frac{x-q}{\lambda}\right)
\end{equation*}
is still a solution of problem (\ref{e:halfharmonicmap}). Differentiating with $\alpha$, $q$ and $\lambda$ respectively, then we set $\alpha = 0$, $q=0$, $\lambda = 1$ and obtain that the following three functions
\begin{equation*}
Z_1(x) =
\begin{pmatrix}
     \frac{1-x^2}{x^2+1}\\
     \frac{2x}{x^2+1}
\end{pmatrix},\quad
Z_2(x) = \begin{pmatrix}
     \frac{2(x^2-1)}{(x^2+1)^2}\\
     \frac{-4x}{(x^2+1)^2}
\end{pmatrix},\quad
Z_3(x) =
\begin{pmatrix}
     \frac{2x(x^2-1)}{(x^2+1)^2}\\
     \frac{-4x^2}{(x^2+1)^2},
\end{pmatrix}
\end{equation*}
which satisfy the linearized equation at $\omega$ of (\ref{e:halfharmonicmap}) defined by
\begin{eqnarray*}
\nonumber (-\Delta)^{\frac{1}{2}}v(x) &=& \left(\frac{1}{2\pi}\int_{\mathbb{R}}\frac{|\omega(x)-\omega(y)|^2}{|x-y|^2}dy\right)v(x)\\
\quad\quad\quad &&+ \left(\frac{1}{\pi}\int_{\mathbb{R}}\frac{(\omega(x)-\omega(y))\cdot(v(x) -v(y))}{|x-y|^2}dy\right)\omega(x)\quad\text{in }\mathbb{R}
\end{eqnarray*}
for $v:\mathbb{R}\to T_U\mathbb{S}^1$.
Using this harmonic extension (see \cite{caffarelliSilvestre2007} for generalization), we have the following extension form of $\omega$ and $Z_1(x)$, $Z_2(x)$, $Z_3(x)$,
\begin{equation*}
\omega(x) =
\begin{pmatrix}
     \frac{2x}{x^2+1}\\
     \frac{x^2-1}{x^2+1}
\end{pmatrix}\to
\omega(x,y) = \begin{pmatrix}
     \frac{2x}{x^2+(y+1)^2}\\
     \frac{x^2+y^2-1}{x^2+(y+1)^2}
\end{pmatrix},
\end{equation*}
\begin{equation*}
Z_1(x) =
\begin{pmatrix}
     \frac{1-x^2}{x^2+1}\\
     \frac{2x}{x^2+1}
\end{pmatrix}\to Z_1(x, y) =
\begin{pmatrix}
\frac{1-x^2-y^2}{x^2+(y+1)^2}\\
     \frac{2x}{x^2+(y+1)^2}
\end{pmatrix},
\end{equation*}
\begin{equation*}
Z_2(x) = \begin{pmatrix}
     \frac{2(x^2-1)}{(x^2+1)^2}\\
     \frac{-4x}{(x^2+1)^2}
\end{pmatrix}\to Z_2(x, y) =
\begin{pmatrix}
     \frac{2x^2-2(y+1)^2}{(x^2+(y+1)^2)^2}\\
     \frac{-4x(y+1)}{(x^2+(y+1)^2)^2}
\end{pmatrix},
\end{equation*}
\begin{equation*}
Z_3(x) =
\begin{pmatrix}
     \frac{2x(x^2-1)}{(x^2+1)^2}\\
     \frac{-4x^2}{(x^2+1)^2}
\end{pmatrix}\to Z_3(x, y) =
\begin{pmatrix}
     \frac{2x(x^2+y^2-1)}{(x^2+(y+1)^2)^2}\\
     -\frac{2(y(y+1)^2+x^2(2+y))}{(x^2+(y+1)^2)^2}
\end{pmatrix}.
\end{equation*}

\subsubsection{Choice of $\lambda$.}
Testing (\ref{e:linearequation11}) with $Z_3(x,y)$ and integrating by parts, by the Stokes theorem and decay assumption (\ref{e:linearequation13}), it holds that
\begin{equation}\label{balancecondition1}
\lambda\int_{\mathbb{R}^2_+}\mathcal{E}^*_1\cdot Z_3dudv + \int_{\mathbb{R}}\mathcal{E}^*_2\cdot Z_3du = 0.
\end{equation}
From the computation of Section 2.3, we have
\begin{equation*}
\begin{aligned}
\lambda\int_{\mathbb{R}^2_+}\mathcal{E}^*_1\cdot Z_3 &\approx \pi\dot{\lambda} + \pi \int_{-T}^tp(s)\Gamma\left(\frac{\lambda^2}{t-s}\right)\frac{ds}{t-s}
\end{aligned}
\end{equation*}
where
\begin{equation*}
\Gamma(\tau) = \int_{0}^\infty\frac{2\rho^2}{(\rho^2+1)^2}[\zeta K_\zeta - \zeta^2K_\zeta^2]|_{\zeta = \tau(1+\rho^2)}d\rho.
\end{equation*}
On the other hand, from Section 2.4, we have
\begin{equation*}
\begin{aligned}
\int_{\mathbb{R}}\mathcal{E}^*_2\cdot Z_3 &= \int_{\mathbb{R}}\left[\frac{4x^2}{(x^2+1)^2}\int_{-T}^tp(s)k(\sqrt{\lambda^2x^2 + \lambda^2}, t-s)ds\right]dx + 2\pi b_2\\
& = \int_{-T}^t\left[p(s)\int_{\mathbb{R}}\frac{4x^2}{(x^2+1)^2} k(\sqrt{\lambda^2x^2 + \lambda^2}, t-s)dx\right]ds + 2\pi b_2\\
& = \int_{-T}^tp(s)\Gamma_b\left(\frac{\lambda(t)^2}{t-s}\right)\frac{ds}{t-s} + 2\pi b_2
\end{aligned}
\end{equation*}
where
\begin{eqnarray*}
\Gamma_b(\tau) = \int_{0}^\infty\frac{8\rho^2}{(\rho^2+1)^2}\frac{1-e^{-\frac{\zeta}{4}}}{\zeta}|_{\zeta = \tau(1+\rho^2)}d\rho
\end{eqnarray*}
and $b_2 = \partial_x z^*_{02}|_{(q, 0)}$.
Then (\ref{balancecondition1}) becomes
\begin{equation*}
\dot{\lambda} + \int_{-T}^tp(s)\Gamma\left(\frac{\lambda^2}{t-s}\right)\frac{ds}{t-s} + \frac{1}{\pi}\int_{-T}^tp(s)\Gamma_b\left(\frac{\lambda(t)^2}{t-s}\right)\frac{ds}{t-s} + 2 b_2 = 0.
\end{equation*}
Hence
\begin{equation*}
\dot{\lambda} + \int_{-T}^tp(s)\Gamma_0\left(\frac{\lambda^2}{t-s}\right)\frac{ds}{t-s} + 2 b_2 = 0,
\end{equation*}
where $\Gamma_0(\tau) = \Gamma(\tau) + \frac{1}{\pi}\Gamma_b(\tau)$. This function satisfies
\begin{equation*}
\Gamma_0(0) = c \neq 0,\quad \Gamma_0(\tau) = O(\frac{1}{\tau}) \text{ as }\tau\to +\infty.
\end{equation*}
Denote
\begin{equation*}
\mathcal{A}[\lambda,\xi] = \dot{\lambda} + \int_{-T}^tp(s)\Gamma_0\left(\frac{\lambda^2}{t-s}\right)\frac{ds}{t-s} + 2 b_2.
\end{equation*}
Now we claim that by the simple ansatz
\begin{equation*}
\dot{\lambda}(t) = -\frac{\kappa |\log T|}{\log^2(T-t)}
\end{equation*}
for some constant $\kappa > 0$, then
\begin{equation}\label{e:approximatequation}
\mathcal{A}(\lambda)(t) = o(1)
\end{equation}
will be achieved, here $o(1)$ vanishes at $t = T$ and is uniformly small with $T$. Denote
\begin{equation*}
\mathcal{B}[\lambda,\xi](t): = \int_{-T}^t\dot{\lambda}(s)\Gamma_0\left(\frac{\lambda^2}{t-s}\right)\frac{ds}{t-s}.
\end{equation*}
Similar arguments as \cite{davila2017singularity} show that
\begin{equation*}
\left|\mathcal{B}[\lambda,\xi](t) - c\kappa\right|\lesssim \kappa\frac{\log(|\log T|)}{|\log T|}.
\end{equation*}
Therefore
\begin{equation*}
\mathcal{A}[\lambda,\xi] = c\kappa(1+o(1)) + 2 b_2.
\end{equation*}
Then we assume that $\frac{d}{dx}z^*_{02}(q, 0) < 0$, (\ref{e:approximatequation}) is satisfied by choosing
\begin{equation*}
\kappa_0 = -\frac{2}{c}\frac{d}{dx}z^*_{02}(q, 0).
\end{equation*}
Define
\begin{equation}\label{e:blowuprate}
\dot{\lambda}_0(t) = -\frac{\kappa_0 |\log T|}{\log^2(T-t)}.
\end{equation}

\subsubsection{Choice of $\xi$.}
Similarly, testing (\ref{e:linearequation11}) with $Z_2(x,y)$ we get
\begin{equation}\label{balanceofxi}
\lambda\int_{\mathbb{R}^2_+}\mathcal{E}^*_1\cdot Z_2dy = \int_{\mathbb{R}}\mathcal{E}^*_2\cdot Z_2dy.
\end{equation}
By direct computations, we have
\begin{equation*}
\begin{aligned}
\lambda\int_{\mathbb{R}^2_+}\mathcal{E}^*_1\cdot Z_2 &\approx -\dot{\xi}\int_{\mathbb{R}^2_+}Z_2(u,v)\cdot Z_2(u,v)dudv = -\pi\dot{\xi}
\end{aligned}
\end{equation*}
and
\begin{equation*}
\begin{aligned}
\int_{\mathbb{R}}\mathcal{E}^*_2\cdot Z_2 &= 0.
\end{aligned}
\end{equation*}
Therefore (\ref{balanceofxi}) becomes
\begin{equation*}
-\pi\dot{\xi} \thickapprox 0.
\end{equation*}
This can be achieved by simply choosing
\begin{equation}\label{e:definitionofxi0}
\xi_0(t) = (q, 0).
\end{equation}
\subsection{The final ansatz.}
Fix $\lambda_0(t)$ defined in (\ref{e:blowuprate}) and $\xi_0(t)$ in (\ref{e:definitionofxi0}). We write
$$
\lambda(t) = \lambda_0(t) + \lambda_1(t),\quad \xi(t) = \xi_0(t) +\xi_1(t).
$$
We are looking for a small solution $\varphi$ of
\begin{equation}\label{e:equationforvarphi}
\begin{aligned}
0=\mathcal{E}^*_1 -\partial_t\varphi + \Delta\varphi
\end{aligned}
\end{equation}
and
\begin{equation}\label{e:equationforvarphi1}
\begin{aligned}
& 0 = \mathcal{E}^*_2 + \frac{d}{dy}\varphi(x, 0, t) + L_U(\Pi_{U^\perp}\varphi) + N_U(\Pi_{U^\perp}(\Phi^*+\varphi))+b(x, 0, t)U.
\end{aligned}
\end{equation}
where
\begin{equation*}
\Phi^* = \Phi[\lambda, \xi] + Z^*.
\end{equation*}
In terms of problem (\ref{e:main}), we let
$$
u = U + \Phi^*+\varphi
$$
solves the problem
\begin{equation*}
\begin{cases}
u_t = \Delta u\text{ in }\mathbb{R}^2_+\times (0, T),\\
u(x,0,t) \in \mathbb{S}^1\text{ for all }(x,0,t)\in \partial\mathbb{R}^2_+\times (0, T),\\
-\frac{du}{dy}(x,0,t) = \frac{1}{8\pi}\left[\int_{0}^{+\infty}\int_{\mathbb{R}}|u(x,0,t) - u(x-z,0,t-\tau)|^2\frac{e^{-\frac{|z|^2}{4\tau}}}{\tau^2}dzd\tau\right]u(x,0,t)\\ \quad\quad\quad\quad\quad\quad\quad\quad\quad\quad\quad\quad\quad\quad\quad\quad\quad\quad\quad\quad\quad\text{ for all }(x,0,t)\in \partial\mathbb{R}^2_+\times (0, T),\\
u(x, y, t) = u_0(x, y)\text{ for all }(x, y, t)\in \mathbb{R}^2_+\times (-\infty, 0].
  \end{cases}
\end{equation*}

\section{The outer-inner gluing scheme}
By possibly modifying $b(x, 0, t)$, system (\ref{e:equationforvarphi})-(\ref{e:equationforvarphi1}) can be rewritten as
\begin{equation}\label{e:finalproblem1}
\begin{aligned}
0=\mathcal{E}^*_1 -\partial_t\varphi + \Delta\varphi\text{ in }\mathbb{R}^2_+\times (0, T),
\end{aligned}
\end{equation}
and
\begin{equation}\label{e:finalproblem2}
\begin{aligned}
0 & = \mathcal{E}^*_2 + \frac{d}{dy}\varphi(x, 0, t)\\
&\quad + \frac{2}{1+\left|(u, v)\right|^2}\Pi_{U^\perp}\varphi\\
&\quad + \frac{1}{\pi}\left[\int_{\mathbb{R}}\frac{(U(x,0,t)-U(x-z,0,t))\cdot\left(\Pi_{U^\perp}\varphi(x,0,t) -\Pi_{U^\perp}\varphi(x-z,0,t)\right)}{|z|^2}dz\right]\\
&\quad\quad\quad\quad\quad\quad\quad\quad\quad\quad\quad\quad\quad\quad\quad\quad\quad\quad\quad\quad\quad\quad\quad\quad\quad\quad\quad\quad\quad U(x, 0, t)\\
&\quad + \left[\frac{1}{8\pi}\int_{0}^{+\infty}\int_{\mathbb{R}}|U(x,0,t) - U(x-z,0,t-\tau)|^2\frac{e^{-\frac{|z|^2}{4\tau}}}{\tau^2}dzd\tau - \frac{2}{1+\left|(u, v)\right|^2}\right]\\
&\quad\quad\quad\quad\quad\quad\quad\quad\quad\quad\quad\quad\quad\quad\quad\quad\quad\quad\quad\quad\quad\quad\quad\quad\quad\quad\quad\quad\quad\quad\quad
\Pi_{U^\perp}\varphi\\
&\quad + N_U(\Pi_{U^\perp}(\Phi^*+\varphi))+b(x, 0, t)U\text{ in }\partial\mathbb{R}^2_+\times (0, T).
\end{aligned}
\end{equation}
Here and in the rest of this paper, we use the notation $u = \frac{x-\xi(t)}{\lambda(t)}$ and $v = \frac{y}{\lambda(t)}$.
Let $\eta_0(s)$ be a smooth cut-off function with $\eta_0(s) = 1$ for $s < 1$ and $=0$ for $s > \frac{3}{2}$. Consider an increasing function $R(t)$ satisfying
$$
R(t) > 0, \quad R(t)\to  \infty\text{ as } t\nearrow T
$$
and define
$$
\eta(x, t):=\eta_0\left(\frac{x-\xi(t)}{R(t)\lambda(t)}\right), \quad \tau_\lambda(t) = \tau_0 + \int_{0}^t\frac{ds}{\lambda^2(s)}
$$
such that
\begin{equation*}
\tau_\lambda\sim \tau_0 + \frac{1}{\lambda_0}\frac{\log^2(T-t)}{\log T}.
\end{equation*}
We decompose the function $\varphi(x, y, t)$ into the following form
\begin{equation}\label{e:errorterm}
\varphi(x, y, t) = \eta \phi\left(\frac{x-\xi(t)}{\lambda(t)}, \frac{y}{\lambda(t)}, \tau_\lambda(t)\right) + \psi(x, y, t)
\end{equation}
with $\phi(u, v, \tau) = 0$ for $\tau\in (-\infty, \tau_0]$ and $\phi(u, 0, \tau)\cdot \omega(u, 0) \equiv 0$ for all $\tau\in (\tau_0, +\infty)$.
Then $\varphi(x, y, t)$ given by (\ref{e:errorterm}) solves (\ref{e:finalproblem1})-(\ref{e:finalproblem2}) if the pair $(\phi, \psi)$ satisfies the following system of evolution equations
\begin{equation}\label{e:innerproblem}
\left\{
\begin{array}{ll}
\partial_\tau\phi = \Delta\phi + \chi_{\mathcal{D}_{2R}}\lambda^2\mathcal{E}^*_1\text{ in }\mathbb{R}^2_+\times (\tau_0, +\infty),\\\\
-\frac{d}{dv}\phi(u, 0, \tau) = \frac{2}{1+\left|u\right|^2}\phi\\
\quad + \frac{1}{\pi}\left[\int_{\mathbb{R}}\frac{(\omega(u,0)-\omega(u-z,0))\cdot\left(\phi(u,0,\tau) -\phi(u-z,0,\tau)\right)}{|z|^2}dz\right]\omega(u, 0)\\
\quad + \chi_{\mathcal{D}_{2R}\cap\left(\partial\mathbb{R}^2_+\times (\tau_0, +\infty)\right)}\left(\lambda\Pi_{\omega^\perp}\mathcal{E}^*_2 + \frac{2}{1+\left|u\right|^2}\Pi_{\omega^\perp}\psi\right)\\
\quad + \chi_{\mathcal{D}_{2R}\cap\left(\partial\mathbb{R}^2_+\times (\tau_0, +\infty)\right)}\\
\quad\quad \times\left(\frac{1}{\pi}\int_{\mathbb{R}}\frac{(\omega(u,0)-\omega(u-z,0))\cdot\left(\Pi_{\omega^\perp}\psi(u,0,\tau) -\Pi_{\omega^\perp}\psi(u-z,0,\tau)\right)}{|z|^2}dz\right)\omega(u, 0)\\
\quad\quad\quad\quad\quad\quad\quad\quad\quad\quad\quad\quad\quad\quad\quad\quad\quad\quad\quad\quad\quad \text{ in }\partial\mathbb{R}^2_+\times (\tau_0, +\infty),\\
\phi = 0\text{ in }\mathbb{R}^2_+\times(-\infty, \tau_0]
\end{array}
\right.
\end{equation}
and
\begin{equation}\label{e:outerproblem}
\left\{
\begin{array}{ll}
\partial_t\psi = \Delta\psi - \frac{\dot{\lambda}}{\lambda}(u, v)\cdot \nabla_{(u, v)}\phi - \frac{\dot{\xi}}{\lambda}\frac{d\phi}{du}\\
\quad\quad\quad + [\Delta\eta\phi + \frac{2}{\lambda}\nabla\eta\nabla\phi - \partial_t\eta\phi] + (1-\eta)\mathcal{E}^*_1\text{ in }\mathbb{R}^2_+\times (0, T), \\
-\frac{d}{dy} \psi(x, 0, t)\\
\quad\quad\quad = (1-\eta)\frac{2\lambda}{\lambda^2+|x-\xi(t)|^2}\psi + (1-\eta)\Pi_{U^\perp}\mathcal{E}^*_2 \\
\quad\quad\quad\quad + \left(\frac{d}{dy}\eta\right)(x, 0, t)\phi + N_U(\Pi_{U^\perp}(\Phi^*+\varphi))\text{ in }\partial\mathbb{R}^2_+\times (0, T),\\
\psi = \psi_0 \text{ in }\mathbb{R}^2_+\times(-\infty, 0].
\end{array}
\right.
\end{equation}
Here $\psi_0$ is a small function which will be determined later, $\chi_A$ is the characteristic function of the set $A$, i.e., $\chi(z) = 1$ if $z\in A$, $\chi(z) = 0$ if $z\not\in A$, $\psi_\infty$ is defined by
$$
\psi_{\infty}[\lambda, \xi] = (u_{\infty} - U) - \Pi_{U^\perp}[\varphi^*] + \tilde{a}(u_\infty-U, \Pi_{U^\perp}[\varphi^*]),
$$
$$
u_{\infty}(x, y) = \frac{Z^*_0(x, y)+\vec{e}}{|Z^*_0(x, y)+\vec{e}|},\quad \vec{e} = \begin{pmatrix}0\\ 1\end{pmatrix}.
$$
Here $\tilde{a}(\cdot, \cdot)$ is determined by the following nonlinear equation
$$
\Pi_{U^\perp}[\varphi] = (u_{\infty} - U) - \Pi_{U^\perp}[\varphi^*] + a(\Pi_{U^\perp}[\varphi^*] + \Pi_{U^\perp}[\varphi])U.
$$
Here we also define the set $\mathcal{D}_{\gamma R} = \{(u, v, \tau)|\tau\in (\tau_0, +\infty), (u, v)\in \mathbb{R}^2_+, |(u, v)|\leq \gamma R\}$ for $\gamma > 0$.

(\ref{e:innerproblem}) is the so-called inner problem and (\ref{e:outerproblem}) is the outer problem. This is a highly nonlinear system, we will apply Schauder's fixed point theorem to solve it. To this aim, we need a linear theory of the following equation
\begin{equation*}
\left\{
\begin{array}{ll}
\partial_\tau\phi = \Delta\phi + \chi_{\mathcal{D}_{2R}}\lambda^2\mathcal{E}^*_1\text{ in }\mathbb{R}^2_+\times (\tau_0, +\infty),\\\\
-\frac{d}{dv}\phi(u, 0, \tau) = \frac{2}{1+\left|u\right|^2}\phi\\
\quad + \frac{1}{\pi}\left[\int_{\mathbb{R}}\frac{(\omega(u,0)-\omega(u-z,0))\cdot\left(\phi(u,0,\tau) -\phi(u-z,0,\tau)\right)}{|z|^2}dz\right]\omega(u, 0)\\
\quad + G[\lambda, \xi, \psi](u, 0, \tau)\text{ in }\partial\mathbb{R}^2_+\times (\tau_0, +\infty),\\
\phi = 0\text{ in }\mathbb{R}^2_+\times(-\infty, \tau_0]
\end{array}
\right.
\end{equation*}
where
\begin{equation*}
\begin{aligned}
&G[\lambda, \xi, \psi]\\
&= \chi_{\mathcal{D}_{2R}\cap\left(\partial\mathbb{R}^2_+\times (\tau_0, +\infty)\right)}\left(\lambda\Pi_{\omega^\perp}\mathcal{E}^*_2 + \frac{2}{1+\left|u\right|^2}\Pi_{\omega^\perp}\psi\right)\\
&\quad +\chi_{\mathcal{D}_{2R}\cap\left(\partial\mathbb{R}^2_+\times (\tau_0, +\infty)\right)}\times\\
&\left(\frac{1}{\pi}\int_{\mathbb{R}}\frac{(\omega(u,0)-\omega(u-z,0))\cdot\left(\Pi_{\omega^\perp}\psi(u,0,\tau) -\Pi_{\omega^\perp}\psi(u-z,0,\tau)\right)}{|z|^2}dz\right)\omega(u, 0).
\end{aligned}
\end{equation*}
In Section 4, we will construct a solution $\phi$ of the following equation
\begin{equation}\label{e:innerproblemlinear}
\left\{
\begin{array}{ll}
\partial_\tau\phi = \Delta\phi + g(u, v, \tau)\text{ in }\mathbb{R}^2_+\times (\tau_0, +\infty),\\\\
-\frac{d}{dv}\phi(u, 0, \tau) = \frac{2}{1+\left|u\right|^2}\phi+ \mathcal{A}[\phi] + h(u, \tau)\text{ in }\partial\mathbb{R}^2_+\times (\tau_0, +\infty),\\
\phi = 0\text{ in }\mathbb{R}^2_+\times(-\infty, \tau_0],
\end{array}
\right.
\end{equation}
which defines a bounded linear operator of the functions $g$ (with compact support in $\mathcal{D}_{2R}$) and $h$ (with compact support in $\mathcal{D}_{2R}\cap \left(\partial\mathbb{R}^2_+\times (\tau_0, +\infty)\right)$) satisfying good $L^\infty$-weight estimates when certain further orthogonality conditions hold.
Here and in the following, we use the notation
$$
\mathcal{A}[\phi]: = \frac{1}{\pi}\left[\int_{\mathbb{R}}\frac{(\omega(u,0)-\omega(u-z,0))\cdot\left(\phi(u,0,\tau) -\phi(u-z,0,\tau)\right)}{|z|^2}dz\right]\omega(u, 0).
$$
In Section 5, we use Schauder's fixed point theorem to prove the existence of solution for (\ref{e:innerproblem}) and (\ref{e:outerproblem}). This provides a solution to (\ref{e:main}) and Theorem \ref{t:main} is concluded.

\section{Linear theory for the inner problem}
In this section, we consider (\ref{e:innerproblemlinear}). Our aim is to construct a solution for (\ref{e:innerproblemlinear}) which defines a bounded linear operator of $g$, $h$ and satisfies good bounds in suitable weighted norms. We divide the discussion into two cases.
\begin{enumerate}
\item[$\bullet$]Case 1. The first component of the vector-valued function $\phi(u, v, \tau)$ is odd in the variable $u$, the second component of the vector-valued function $\phi(u, v, \tau)$ is even in the variable $u$. Correspondingly, we assume the first components of the vector-valued functions $g(u, v, \tau)$ and $h(u, \tau)$ are odd in the variable $u$, the second components of the vector-valued functions $g(u, v, \tau)$ and $h(u, \tau)$ are even in the variable $u$.
\item[$\bullet$]Case 2. The first component of the vector-valued function $\phi(u, v, \tau)$ is even in the variable $u$, the second component of the vector-valued function $\phi(u, v, \tau)$ is odd in the variable $u$. Correspondingly, we assume the first components of the vector-valued functions $g(u, v, \tau)$ and $h(u, \tau)$ are even in the variable $u$, the second components of the vector-valued functions $g(u, v, \tau)$ and $h(u, \tau)$ are odd in the variable $u$.
\end{enumerate}
\subsection{Case 1.}
This subsection is devoted to construct a solution to the initial value problem
\begin{equation}\label{p11}
\left\{
\begin{array}{lll}
\partial_\tau\phi = \Delta\phi + g(u, v, \tau)\text{ in }B^+_{2R(\tau)}\times (\tau_0, +\infty),\\
-\frac{d}{dv}\phi = L_\omega[\phi] + h(u, \tau)\text{ in }\left(B^+_{2R(\tau)}\times (\tau_0, +\infty)\right)\cap
\left(\partial\mathbb{R}^2_+\times (\tau_0, +\infty)\right),\\
\phi(u, v, \tau) = 0\text{ in }B^+_{2R(\tau)}\times (-\infty, \tau_0],\\
\phi \cdot \omega = 0 \text{ in } \left(B^+_{2R(\tau)}\times (\tau_0, +\infty)\right)\cap \left(\partial\mathbb{R}^2_+\times (\tau_0, +\infty)\right).
\end{array}
\right.
\end{equation}
for any given functions $g$, $h$ with $\|g\|_{a, \nu} <+\infty$, $\|h\|_{a, \nu} <+\infty$, the first components of $g$ and $h$ are even in the $u$ variable, we use the idea from \cite{cortazar2016green} and \cite{davila2017singularity}.
\begin{prop} \label{prop1}
Let $1< a < 2$ and $\nu > 0$ be given positive numbers. Then, for any $g$, $h$ with $\|g\|_{a, \nu} <+\infty$, $\|h\|_{a, \nu} <+\infty$, the first components of $g$ and $h$ are odd in the $u$ variable, the second components of $g$ and $h$ are even in the $u$ variable, and satisfying \begin{equation}\label{e:orthognalconditionz3}
\int_{B^+_{2R}}g(u, v, \tau)\cdot Z_3(u, v)dudv + \int_{-2R}^{2R}h(u,\tau)\cdot Z_3(u)du = 0\quad\text{for all }\tau\in (\tau_0,\infty)
\end{equation}
there exist  $\phi = \phi[g, h]$ solving (\ref{p11}) which defines a bounded linear operator of $g$, $h$. Furthermore, the following estimate holds
\begin{equation*}
\begin{aligned}
&|\phi[g, h]|\lesssim\tau^{-\nu}R^2\log^2 R\left(\frac{\|h\|_{a,\nu}}{1+|u|^a} + \frac{R^{2-a}\|g\|_{a,\nu}}{1+|(u, v)|^2}\right).
\end{aligned}
\end{equation*}
\end{prop}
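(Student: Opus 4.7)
The proof will follow the now-standard strategy for the inner linearized problem (see the schemes in \cite{davila2017singularity} and \cite{cortazar2016green}), adapted to the nonlocal boundary operator $L_\omega$ arising from the half-Laplacian and to the specific symmetry class of Case~1. The first observation that underlies the whole argument is that, among the three kernel elements $Z_1,Z_2,Z_3$ of the linearization of the half-harmonic map equation at $\omega$, only $Z_3$ has first component odd and second component even in $u$; the other two have the opposite parity. Hence, within Case~1, the single orthogonality condition \eqref{e:orthognalconditionz3} removes exactly the solvability obstruction produced by the dilation mode, and we can expect an $L^\infty$-weighted inverse.

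The plan is to first produce, for each truncation time $\tau_1>\tau_0$, a solution of
\begin{equation*}
\left\{\begin{array}{l}
\partial_\tau\phi=\Delta\phi+g\text{ in }B^+_{2R}\times(\tau_0,\tau_1),\\
-\partial_v\phi=\frac{2}{1+|u|^2}\phi+\mathcal{A}[\phi]+h\text{ on }(\partial\mathbb{R}^2_+\cap\overline{B^+_{2R}})\times(\tau_0,\tau_1),\\
\phi\cdot\omega=0,\quad \phi|_{\tau=\tau_0}=0,
\end{array}\right.
\end{equation*}
by means of standard $L^2$-parabolic theory applied to the self-adjoint bilinear form associated with the half-Dirichlet energy on the half-disk, with the nonlocal term $\mathcal{A}[\phi]$ treated perturbatively via its boundedness on $\dot H^{1/2}$. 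The symmetry assumption is preserved by the evolution because the operator commutes with the reflection $u\mapsto -u$ combined with a sign change of the first component. The orthogonality condition \eqref{e:orthognalconditionz3} is preserved in time thanks to the fact that $Z_3$ is in the kernel of the stationary linearized operator and satisfies the matching boundary condition: multiplying the equation by $Z_3$ and integrating by parts produces only the boundary integral, which is the quantity assumed to vanish in \eqref{e:orthognalconditionz3}.

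The heart of the proof is the a~priori bound
\begin{equation*}
\sup_{(u,v,\tau)}\Big[\tau^{\nu}R^{-2}\log^{-2}R\Big(\tfrac{1}{1+|u|^a}+\tfrac{R^{2-a}}{1+|(u,v)|^2}\Big)^{-1}|\phi(u,v,\tau)|\Big]\lesssim \|g\|_{a,\nu}+\|h\|_{a,\nu}.
\end{equation*}
I would establish this by contradiction: assume sequences $g_n,h_n,\phi_n,R_n,\tau_{1,n}$ with the left hand side equal to $1$ and $\|g_n\|_{a,\nu}+\|h_n\|_{a,\nu}\to 0$. Using parabolic regularity away from concentration scales, together with the fact that the supremum $1$ must be attained at some $(u_n,v_n,\tau_n)$, one extracts, after dilation to the appropriate scale (either bounded, or of order $R_n$, or at the boundary), a nonzero limit $\phi_\infty$. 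The three alternatives are ruled out in turn: the bounded case yields a bounded entire solution of the stationary linearized problem in $\mathbb{R}^2_+$ with $\phi_\infty\cdot\omega=0$, which by the nondegeneracy result of \cite{sire2017nondegeneracy} lies in $\mathrm{span}\{Z_1,Z_2,Z_3\}$, then collapses to a multiple of $Z_3$ by the symmetry of Case~1, and finally to zero by passing \eqref{e:orthognalconditionz3} to the limit; the outer case uses the fact that $|(u,v)|^{-2}$-decay cannot be improved by the caloric evolution with zero data past $\tau_0$; the boundary-layer case is excluded by a Green's function estimate in $\mathbb{R}^2_+$ with nonlocal Neumann data, exactly as in Section~7 of \cite{cortazar2016green}. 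The $\tau^{-\nu}$ factor is captured via a barrier of the form $\tau^{-\nu}\varphi(u,v)$, with $\varphi$ a positive supersolution behaving like $(1+|u|^a)^{-1}$ on the boundary and like $(1+|(u,v)|^2)^{-1}$ in the bulk; the $R^2\log^2 R$ factor arises from the logarithmic loss in constructing this supersolution in two dimensions (exactly as in \cite{davila2017singularity}).

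Once the a~priori bound is established, one passes $\tau_1\to\infty$ using Arzel\`a--Ascoli on compact subsets of space-time combined with the weighted estimate for tightness, obtaining a solution on $(\tau_0,\infty)$ satisfying the same bound. Linearity of the construction then provides the bounded linear dependence on $g$ and $h$. The main obstacle will be the blow-up step: in our nonlocal setting the limiting equation is the half-harmonic map linearization on $\mathbb R^2_+$, and one must control the tail of $\mathcal{A}[\phi_n]$ under rescaling; this is where the weighted norm with index $a\in(1,2)$ is essential, as $a>1$ ensures integrability of $\phi_n/|z|^2$ at infinity along the boundary, while $a<2$ allows matching with the bulk decay $|(u,v)|^{-2}$ produced by the heat kernel.
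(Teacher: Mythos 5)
Your proposal takes a genuinely different route from the paper. The paper's proof of Proposition~\ref{prop1} is \emph{constructive} and proceeds in two steps: Step~1 solves the auxiliary problem~\eqref{modo0} (zero Dirichlet data outside $B^+_{2R}$, no orthogonality imposed) by writing $\phi=\phi_*+\tilde\phi$, removing the boundary potential near the origin via a cut-off $\eta_\ell$, testing the equation for $\tilde\phi$ against itself, invoking the Poincar\'e-type inequality
$Q(\phi,\phi)\geq\frac{\beta}{R^2\log R}\int_{B^+_{2R}}\phi^2$
on the $Z_3$-orthogonal complement, and closing with Gronwall and interior parabolic estimates; Step~2 then feeds the data through the stationary elliptic inverse $L_0^{-1}[g,h]$ and applies the Step~1 parabolic solution operator, finally recovering $\phi=L_0[\Phi_0]$, with the gradient bounds obtained by a dyadic rescaling of the interior Schauder estimates. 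The $R^2\log^2 R$ loss is thereby an \emph{output} of the spectral-gap constant on $B^+_{2R}$. What you propose instead is a compactness/contradiction scheme, close in spirit to what the paper actually does for Case~2 in Lemma~\ref{l5:3}.

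There is a real gap in this transposition. The contradiction argument works cleanly in Lemma~\ref{l5:3} because the problem there is posed on the whole half-plane $\mathbb{R}^2_+$, the weight $(1+|(u,v)|)^{-\sigma}$ carries no $R$-dependence, and the limiting equation after rescaling is the exact stationary nondegeneracy problem. In Case~1 the domain is $B^+_{2R}$ with $R$ varying, the desired a priori bound is normalized by $R^2\log^2 R$, and you allow sequences $R_n\to\infty$. That changes the structure of the blow-up: at the concentration point you must decide whether the relevant scale is $O(1)$, $O(R_n)$, or a boundary layer, and in each case the $R_n^2\log^2 R_n$ prefactor interacts with the rescaling in a way that is not tracked in your sketch. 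In particular it is not evident that the $R^2\log^2 R$ factor \emph{emerges} from the contradiction; your explanation that it comes from a logarithmic loss in a static supersolution $\tau^{-\nu}\varphi(u,v)$ is not backed by a construction of such a $\varphi$ for the inhomogeneous heat equation on $B^+_{2R}$ with the nonlocal Neumann data, and a static barrier does not, by itself, produce the $\tau^{-\nu}$ temporal decay unless the forcing and initial data are compatible with it. The paper sidesteps all of these by obtaining the constant explicitly from the quadratic form, which is why it does not need to control a blow-up at scale $R_n$.

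Two smaller points. First, in Step~1 the paper does \emph{not} need the orthogonality~\eqref{e:orthognalconditionz3} at all; that condition enters only in Step~2 through solvability of the stationary operator $L_0$ on the orthogonal complement of $Z_3$. Your sketch treats orthogonality as a quantity preserved by the flow and used in the limit, which is a logically different role and would require a separate argument (the boundary pairing with $Z_3$ does not obviously reduce to~\eqref{e:orthognalconditionz3} when $\phi$ vanishes identically outside $B^+_{2R}$ and the test function $Z_3$ does not). Second, your parity observation (only $Z_3$ has first component odd, second even) is correct and matches the paper's use, but in the paper it is the Poincar\'e inequality, not the nondegeneracy classification in a blow-up limit, that eliminates the kernel direction. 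Your proposal is salvageable but would need a quantitative version of the contradiction step that keeps the $R$-dependence explicit; as written it does not yield the stated bound.
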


\begin{proof}[{\bf Proof of Proposition \ref{prop1}}] We divide the proof into two steps. First, we construct a solution to (\ref{p11}) with zero boundary condition on
$\mathbb{R}^2_+\setminus B^+_{2R(\tau)}$ and for $g$, $h$ not necessarily satisfying condition (\ref{e:orthognalconditionz3}). Then, we use of this construction to solve \eqref{p11}.

{\it {\bf Step $1$}}.
We claim that for any $G$, $H$ satisfying $\|G\|_{b,\nu} <+\infty$, $\|H\|_{c,\nu} <+\infty$, $b\in (-1, 2)$, $c\in (-1, 1)$, there exists $\phi = \phi (u, v, \tau )$ solving
\begin{equation}\label{modo0}
\left\{
\begin{array}{lll}
\partial_\tau\phi = \Delta\phi + G(u, v, \tau)\text{ in }B^+_{2R(\tau)}\times (\tau_0, +\infty),\\
-\frac{d}{dv}\phi = L_\omega[\phi] + H(u,\tau)\text{ in }\left(B^+_{2R(\tau)}\times (\tau_0, +\infty)\right)\cap
\left(\partial\mathbb{R}^2_+\times (\tau_0, +\infty)\right),\\
\phi = 0\text{ on }\left(\mathbb{R}^2_+\setminus B_{2R(\tau)}^+(0)\right)\times (\tau_0, +\infty),\\
\phi(u, v, \tau) = 0\text{ in }B^+_{2R(\tau)}\times (-\infty, \tau_0]
\end{array}
\right.
\end{equation}
and satisfying
\begin{equation*}
\begin{aligned}
(1+|(u, v)|)&|\nabla\phi(u, v, \tau)|+|\phi(u, v,\tau)|\\
&\lesssim\tau^{-\nu}R^2\log^2 R(R^{2-b}\|G\|_{b, \nu}+R^{1-c}\|H\|_{c, \nu}).
\end{aligned}
\end{equation*}

Let $\eta(s)$ be a smooth cut-off function, for a fixed but large number $\ell$ independent from $R$, we define $\eta_\ell(u, v) = \eta (|(u, v)|-\ell )$.
From standard parabolic theory, there exists a unique solution $\phi_*[G, H]$ of
\begin{equation*}
\left\{
\begin{array}{lll}
\partial_\tau\phi = \Delta\phi + G(u, v, \tau)\text{ in }B^+_{2R(\tau)}\times (\tau_0, +\infty),\\
-\frac{d}{dv}\phi = L_\omega[(1-\eta_{\ell})\phi] + H(u,\tau)\text{ in }\left(B^+_{2R(\tau)}\times (\tau_0, +\infty)\right)\cap
\left(\partial\mathbb{R}^2_+\times (\tau_0, +\infty)\right),\\
\phi = 0\text{ on }\left(\mathbb{R}^2_+\setminus B_{2R(\tau)}^+(0)\right)\times (\tau_0, +\infty),\\
\phi(u, v, \tau) = 0\text{ in }B^+_{2R(\tau)}\times (-\infty, \tau_0].
\end{array}
\right.
\end{equation*}
The first component of $\phi_*[G, H]$ is even in the $u$ variable and satisfies
$$
\left|\phi_*[G, H]\right|\lesssim\tau^{-\nu}(R^{2-b}\|G\|_{b, \nu}+R^{1-c}\|H\|_{c, \nu}).
$$
Setting $\phi=\phi_*[G, H]+\tilde\phi $, then (\ref{modo0}) is reduced to the following problem
\begin{equation}\label{modo01}
\left\{
\begin{array}{lll}
\partial_\tau\tilde{\phi} = \Delta\tilde{\phi} + G(u, v, \tau)\text{ in }B^+_{2R(\tau)}\times (\tau_0, +\infty),\\
-\frac{d}{dv}\tilde{\phi} = L_\omega[\tilde{\phi}] + \tilde H_0(u,\tau)\text{ in }\left(B^+_{2R(\tau)}\times (\tau_0, +\infty)\right)\cap
\left(\partial\mathbb{R}^2_+\times (\tau_0, +\infty)\right),\\
\tilde{\phi} = 0\text{ on }\left(\mathbb{R}^2_+\setminus B_{2R(\tau)}^+(0)\right)\times (\tau_0, +\infty),\\
\tilde{\phi}(u, v, \tau) = 0\text{ in }B^+_{2R(\tau)}\times (-\infty, \tau_0],
\end{array}
\right.
\end{equation}
where $\tilde H_0 = \frac{2}{1+|u|^2}\eta_\ell\phi_*[G, H]$. Notice that the first component of $\tilde H_0$ is even in $u$ variable and it is compactly supported with size controlled by $G$ and $H$. Hence, for any $m>0$, we have
\begin{equation}\label{tete}
\begin{aligned}
|\tilde H_0(u, v, \tau)|&\lesssim\frac{\tau^{-\nu}}{1+|(u,v)|^m}\left[\sup_{\tau > \tau_0}\tau^{\nu}|\phi_*[G, H](\cdot,\tau)|\right]\\
&\lesssim\frac{\tau^{-\nu}}{1+|(u, v)|^m}(R^{2-b}\|G\|_{b,\nu}+R^{1-c}\|H\|_{c, \nu}).
\end{aligned}
\end{equation}
Testing (\ref{modo01}) against $\tilde \phi$ and integrating, we obtain
$$
\partial_\tau\int_{B_{2R}^+}\tilde\phi^2+Q(\tilde \phi,\tilde \phi)=\int_{B_{2R}}G\tilde\phi+\int_{-2R}^{2R}\tilde H_0\tilde \phi,
$$
here $Q$ is the quadratic form defined by
\begin{equation*}
Q(\phi,\phi):=\int_{B_{2R}^+}|\nabla \phi|^2dudv - \int_{-2R}^{2R}\frac{2}{1+|u|^2}|\phi|^2du.
\end{equation*}
It is easy to check that there exists a constant $ \beta >0$ such that, for any $\phi$ with $\int_{B_{2R}^+}\phi\cdot Z_3dudv = 0$ and $\phi = 0$ on $\mathbb{R}^2_+\setminus B_{2R}^+$, we have
$$
Q(\phi,\phi)\geq\frac{\beta}{R^2\log R} \int_{B_{2R}^+}\phi^2dudv.
$$
Thus for some $\beta' >0$, thereholds
\begin{equation}\label{en}
\partial_\tau\int_{B_{2R}^+}\tilde\phi^2+\frac{\beta'}{R^2\log R}\int_{B_{2R}^+}\tilde\phi^2\lesssim R^2\log R
\left(\int_{B_{2R}^+} G^2 + \int_{-2R}^{2R} \tilde{H}^2_0\right).
\end{equation}
Set
$$
K:=\left[\sup_{\tau>\tau_0}\tau^{\nu}\|\phi_*[G, H](\cdot,\tau)\|_{L^\infty}\right].
$$
On the other hand, using estimate (\ref{tete}) for a large $m$, we obtain
$$
\left(\int_{B_{2R}^+}G^2 + \int_{-2R}^{2R}\tilde{H}^2_0\right)\lesssim\tau^{-2\nu}K^2.
$$
By the fact that $\tilde\phi(\cdot, \tau_0) =0$ and Gronwall's inequality, we obtain from (\ref{en}) that
\begin{equation*}
\|\tilde\phi(\cdot,\tau)\|_{L^2(B_{2R}^+)}\lesssim\tau^{-\nu}KR^2\log R,
\end{equation*}
for all $\tau > \tau_0$. From standard parabolic estimates, we get
$$
\|\tilde\phi(\cdot,\tau)\|_{L^\infty(B_M)}\lesssim\tau^{-\nu }KR^2\log R\text{ for all }\tau>\tau_0.
$$
Therefore,
\begin{equation*}
\begin{aligned}
(1+|(u, v)|)|\nabla\tilde\phi(u, v, \tau)|&+|\tilde\phi(u, v, \tau)|\\
&\lesssim\tau^{-\nu}R^2\log^2 R\left[\sup_{\tau > \tau_0}\tau^{\nu}|\phi_*[G, H] (\cdot, \tau)|\right].
\end{aligned}
\end{equation*}
From this estimate and (\ref{tete}), the function $\phi_0[G, H] :=\tilde\phi + \phi_*[G, H]$ solves (\ref{modo0}) and satisfies
\begin{equation*}
\begin{aligned}
&(1+|(u, v)|)|\nabla\phi_0(u, v, \tau)|+|\phi_0(u, v, \tau)|\\
&\quad\quad\quad\quad\quad\quad\lesssim\tau^{-\nu}R^2\log^2 R(R^{2-b}\|G\|_{b, \nu}+R^{1-c}\|H\|_{c, \nu}).
\end{aligned}
\end{equation*}

{\it {\bf Step $2$}.}
For bounded functions $g = g(u, v)$, $h=h(u)$ in $B^+_{2R}$ whose first components are even in the $u$ variable and $\int_{B^+_{2R}}g(u, v, \tau)\cdot Z_3(u, v)dudv + \int_{-2R}^{2R}h(u,\tau)\cdot Z_3(u)du = 0$. Let us extent $h$ as zero outside $B_{2R}^+$ and still denote the extended function as $h$.
From standard elliptic estimate, the equation
\begin{equation*}
\left\{
\begin{array}{lll}
\Delta\phi =  g(u, v, \tau)\text{ in }\mathbb{R}^2_+,\\
-\frac{d}{dv}\phi = L_\omega[\phi] + h(u, \tau)\text{ in }\partial\mathbb{R}^2_+,\\
\lim_{|(u, v)|\to +\infty}\phi(u, v) = 0.
\end{array}
\right.
\end{equation*}
has a solution $H =: L_0^{-1}[g, h] $ satisfying
$$
|H(u, v,\tau)|\lesssim\tau^{-\nu}\left(\frac{1}{(1+|(u, v)|)^{a-1}}\|h\|_{a,\nu}+\frac{1}{(1+|(u, v)|)^{a-2}}\|g\|_{a,\nu}\right).
$$
Let $\Phi_0$ be the unique solution in $B_{3R}^+$ of the problem
\begin{equation*}
\left\{
\begin{array}{lll}
\partial_\tau\phi = \Delta\phi + H(u, v, \tau)\text{ in }B^+_{3R(\tau)}\times (\tau_0, +\infty),\\
-\frac{d}{dv}\phi = L_\omega[\phi] + H(u,0,\tau)\text{ in }\left(B^+_{3R(\tau)}\times (\tau_0, +\infty)\right)\cap
\left(\partial\mathbb{R}^2_+\times (\tau_0, +\infty)\right),\\
\phi = 0\text{ on }\left(\mathbb{R}^2_+\setminus B_{3R(\tau)}^+(0)\right)\times (\tau_0, +\infty),\\
\phi(u, v, \tau) = 0\text{ in }B^+_{3R(\tau)}\times (-\infty, \tau_0].
\end{array}
\right.
\end{equation*}
From Step 1, $\Phi_0[H]$ defines a bounded linear operator of $H$ and satisfies the estimate
\begin{equation*}
\begin{aligned}
|\Phi_0(u,v,\tau)|\lesssim\tau^{-\nu}R^{2}\log^2 R\left(R^{4-a}\|H(u,v,\tau)\|_{a-2,\nu}+R^{2-a}\|H(u,0,\tau)\|_{a-1,\nu}\right).
\end{aligned}
\end{equation*}

Now let us fix a vector $e$ with $|e|=1$, a large number $\rho>0$ with $\rho \leq 2R$ and $\tau_1 \geq \tau_0$. Consider the following change of variables
\begin{equation*}
\Phi_\rho (z,t):=\Phi_0(\rho e+\rho z, \tau_1+\rho^2 t),
\end{equation*}
\begin{equation*}
G_\rho (z,t) := \rho^2H(\rho e+\rho z, \tau_1+\rho^2 t),
\end{equation*}
\begin{equation*}
H_\rho (z,t) := \rho H(\rho e+\rho z, 0, \tau_1+\rho^2 t).
\end{equation*}
Then $\Phi_\rho(z, t)$ satisfies
\begin{equation*}
\left\{
\begin{array}{lll}
\partial_\tau\Phi_\rho = \Delta_z\Phi_\rho + B_\rho(z,t)\Phi_\rho+G_\rho(z,t),\quad (z, t)\in B_{1}^+(0)\times (0, 2),\\
-\frac{d}{dz_2}\phi = C_\rho(z,t)\Phi_\rho+H_\rho(z,t),\quad (z, t)\in (-1,1)\times \{0\}\times (0, 2)
\end{array}
\right.
\end{equation*}
with $B_\rho = O(\rho^{-2})$, $C_\rho = O(\rho^{-2})$ uniformly in $B_1^+(0) \times (0,\infty)$. From standard parabolic estimates, we have
\begin{equation*}
\begin{aligned}
\|\nabla_z\Phi_\rho\|_{L^\infty(B^+_{\frac 12 }(0)\times(1,2))} &\lesssim\|\Phi_\rho\|_{L^\infty(B_{1}^+(0)\times(0, 2))}\\
&\quad + \|G_\rho\|_{L^\infty(B_{1}^+(0)\times(0, 2))}+\|H_\rho\|_{L^\infty((-1,1)\times \{0\}\times (0, 2))}.
\end{aligned}
\end{equation*}
Furthermore, there holds
$$
\|G_\rho\|_{L^\infty(B_{1}^+(0)\times(0, 2))}\lesssim\rho^{1-a}\tau_1^{-\nu}\|H\|_{a,\nu},
$$
$$
\|H_\rho\|_{L^\infty((-1,1)\times\{0\}\times (0, 2))}\lesssim\rho^{1-a}\tau_1^{-\nu}\|H\|_{a,\nu},\quad
\|\Phi_\rho\|_{L^\infty(B_{1}^+(0)\times(0, 2))}\lesssim\tau_1^{-\nu} K(\rho)
$$
with
\begin{equation}\label{Knew}
K(\rho) = R^{2}\log^2 R\left(R^{2-a}\|h\|_{a,\nu}+R^{4-a}\|g\|_{a,\nu}\right).
\end{equation}
Hence
$$
\rho |\nabla\Phi_0(\rho e, \tau_1+\rho^2)|\lesssim\tau_1^{-\nu}K(\rho) .
$$
Choose $\tau_0\ge R^2$, then we have
\begin{equation*}\label{coco}
(1+|(u, v)|)|\nabla\Phi_0(u, v, \tau)|\lesssim\tau^{-\nu}K(|(u, v)|)
\end{equation*}
for any $\tau > 2\tau_0$ and $|(u, v)|\le 3R$.

Since $H$ is of class $C^1$ and $\|\nabla H\|_{a-1,\nu}\leq\|h\|_{a,\nu}+\|g\|_{a,\nu}$, we obtain
$$
(1+|(u, v)|^2)|D^2\Phi_0(u, v, \tau)|\lesssim\tau^{-\nu} K(|(u, v)|)
$$
for all $\tau> \tau_0$, $|(u, v)|\leq 2R$ with $K$ being defined in (\ref{Knew}). Thus we have
\begin{equation*}
\begin{aligned}
(1+|(u, v)|^2)|&D^2\Phi_0(u, v, \tau)|+(1+|(u, v)|)|\nabla\Phi_0(u, v, \tau)|+|\Phi_0(u, v,\tau)|\\
&\lesssim\tau^{-\nu}R^{2}\log^2 R\left(R^{2-a}\|h\|_{a,\nu}+R^{4-a}\|g\|_{a,\nu}\right).
\end{aligned}
\end{equation*}
Therefore
\begin{equation*}
\begin{aligned}
&|L_0[\Phi_0](\cdot,\tau)|\lesssim\tau^{-\nu}R^{2}\log^2 R\left(\frac{R^{2-a}\|h\|_{a,\nu}}{1+|(u, v)|}+\frac{R^{4-a}\|g\|_{a,\nu}}{1+|(u, v)|^{2}}\right).
\end{aligned}
\end{equation*}
Define
\begin{equation*}
\phi_0[g, h] := L_0 [\Phi_0].
\end{equation*}
Then $\phi_0[g, h]$ satisfies (\ref{p11}) and the proof is completed.
\end{proof}
\subsection{Case 2.}
The following proposition is valid.
\begin{prop}\label{proposition4.1}
Let $1 < a < 2$, $\nu > 0$ be given positive numbers. Then, for $R > 0$ sufficiently large and any $g = g(u, v, \tau)$, $h = h(u, \tau)$ with $\|g\|_{a, \nu} < +\infty$, $\|h\|_{a, \nu} < +\infty$, the first components of $g(u, v, \tau)$ and $h(u, \tau)$ are even in the $u$ variable for all $\tau$, the second components of $g(u, v, \tau)$ and $h(u, \tau)$ are odd in the $u$ variable for all $\tau$, and satisfying
\begin{equation*}
\int_{B^+_{2R(\tau)}}g(u, v, \tau)\cdot Z_2(u, v)dudv + \int_{-2R}^{2R}h(u,\tau)\cdot Z_2(u, 0)du = 0\quad\text{for all }\tau\in (\tau_0,\infty),
\end{equation*}
there exists $\phi = \phi[g, h]$ solving (\ref{e:innerproblemlinear}), which defines a linear operator of $g$ and $h$ satisfying
\begin{equation*}
|\phi(u, v, \tau)|\lesssim \tau^{-\nu}(1+|(u, v)|)^{-\sigma}\left(R^{2+\sigma-a}\|g\|_{a,\nu} + R^{1+\sigma-a}\|h\|_{a,\nu}\right)
\end{equation*}
for some $\sigma\in (0, 1)$.
\end{prop}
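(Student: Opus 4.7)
The plan is to adapt the two-step scheme of Proposition~\ref{prop1} (Case~1) to the Case~2 symmetry class, with the translation mode $Z_2$ playing the role that $Z_3$ played before. The key improvement driving the sharper spatial decay $(1+|(u,v)|)^{-\sigma}$ is that, in the Case~2 parity class, orthogonality against $Z_2$ removes the slowest-decaying bounded element of the kernel of the linearized half-harmonic operator, which in turn upgrades the decay of the elliptic inversion.

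\textbf{Elliptic inversion.} With $\tau$ frozen as a parameter, I would first construct $H = L_0^{-1}[g,h]$ solving
\begin{equation*}
\begin{cases}
\Delta H = g(u,v,\tau) & \text{in }\mathbb{R}^2_+,\\
-\partial_v H = L_\omega[H]+h(u,\tau) & \text{on }\partial\mathbb{R}^2_+,\\
H(u,v)\to 0 & \text{as }|(u,v)|\to\infty.
\end{cases}
\end{equation*}
By the nondegeneracy theorem of \cite{sire2017nondegeneracy}, the bounded kernel of the linearized half-harmonic operator is spanned by $Z_1,Z_2,Z_3$; in the Case~2 parity class (first component even in $u$, second odd) only $Z_2$ survives, and it is killed by the orthogonality hypothesis on $(g,h)$. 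A Fredholm-type argument combined with a multipole expansion of the Poisson representation of $H$ (with the leading-order coefficient in the $Z_2$-direction forced to vanish) then yields
\begin{equation*}
|H(u,v,\tau)|\lesssim \tau^{-\nu}(1+|(u,v)|)^{-\sigma}\bigl(\|g\|_{a,\nu}+\|h\|_{a,\nu}\bigr)
\end{equation*}
for some $\sigma\in(0,1)$.

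\textbf{Parabolic step and assembly.} Following Step~1 of Proposition~\ref{prop1}, I next solve
\begin{equation*}
\begin{cases}
\partial_\tau\Phi_0=\Delta\Phi_0+H(u,v,\tau) & \text{in }B^+_{3R(\tau)}\times(\tau_0,\infty),\\
-\partial_v\Phi_0=L_\omega[\Phi_0]+H(u,0,\tau) & \text{on }(\partial\mathbb{R}^2_+\cap B^+_{3R})\times(\tau_0,\infty),\\
\Phi_0=0 & \text{on }(\mathbb{R}^2_+\setminus B^+_{3R(\tau)})\times(\tau_0,\infty),\\
\Phi_0(\cdot,\tau_0)=0,
\end{cases}
\end{equation*}
via an energy estimate on the quadratic form $Q$ of Section~4.1; coercivity holds modulo the $Z_2$-direction, which is neutralized by the orthogonality of $H$ inherited from that of $(g,h)$. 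Standard parabolic bootstrapping followed by a Schauder scaling argument at the point $\rho e$ yields pointwise bounds on $\Phi_0$, $\nabla\Phi_0$, $D^2\Phi_0$ with polynomial dependence $R^{2+\sigma-a}\|g\|_{a,\nu}+R^{1+\sigma-a}\|h\|_{a,\nu}$; crucially there is no $\log^2 R$ loss because the source $H$ already decays as $(1+|(u,v)|)^{-\sigma}$. Setting $\phi[g,h]:=L_0[\Phi_0]$ as in Case~1 then produces a solution of \eqref{e:innerproblemlinear}, and the claimed estimate follows from the Hessian bound together with the schematic inequality $|L_0[\Phi_0]|\lesssim (1+|(u,v)|)^{-2}\bigl(|\Phi_0|+(1+|(u,v)|)|\nabla\Phi_0|+(1+|(u,v)|)^2|D^2\Phi_0|\bigr)$.

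\textbf{Main obstacle.} The delicate step is the decay estimate for $H$ in the elliptic inversion: upgrading the generic $O(1)$ bound to $O(|(u,v)|^{-\sigma})$ using only the scalar orthogonality against $Z_2$. This requires a careful far-field analysis of the nonlocal operator $L_\omega$, identifying which harmonic modes it generates at infinity in the chosen parity class and isolating the slowest one as a multiple of $Z_2$, whose coefficient vanishes by the orthogonality. Extracting the specific exponent $\sigma\in(0,1)$ then comes from bounding the next-order term in the expansion, for which the hypothesis $a>1$ on the weight provides just enough integrability. Once this decay of $H$ is secured, the remainder of the argument mirrors Proposition~\ref{prop1} almost verbatim.
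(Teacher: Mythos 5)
Your proposal takes a genuinely different route from the paper. The paper does not attempt an elliptic inversion in Case 2: instead it proves an a priori weighted $L^\infty$ bound for the parabolic problem posed on the whole half space (Lemma \ref{l5:3}) by a blow-up/contradiction argument---using the nondegeneracy of $\omega$ together with the assumed decay, parity and orthogonality to rule out kernel elements in the bounded-rescaling regime, and a parabolic Liouville lemma (Lemma \ref{l4.3}, proved with an explicit supersolution) in the far-field rescaling regime---and then derives Proposition \ref{proposition4.1} from Lemma \ref{l5:3} by a simple norm comparison. The change of strategy relative to Case 1 is forced by exactly the issue your argument overlooks.

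The gap is in your identification of the kernel elements surviving Case 2 parity. You assert that ``only $Z_2$ survives,'' but in fact \emph{both} $Z_1$ (rotation) and $Z_2$ (translation) have first component even and second component odd in $u$; indeed
\begin{equation*}
Z_1(x,y)=\left(\frac{1-x^2-y^2}{x^2+(y+1)^2},\ \frac{2x}{x^2+(y+1)^2}\right)
\end{equation*}
lies in the Case 2 class, and crucially $Z_1$ does not decay: $Z_1\to -\mathbf{e_1}$ as $|(u,v)|\to\infty$. Your stationary inversion demands $H\to 0$ at infinity, but the hypothesis gives orthogonality only against $Z_2$; the pairing of $(g,h)$ against $Z_1$ is precisely the solvability datum controlling the logarithmic tail of the two-dimensional Neumann/half-Laplacian Green's function, since $Z_1$ is asymptotically the constant vector $-\mathbf{e_1}$ and testing with it against the equation produces, via Green's identity, the coefficient of the $\log$ growth. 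With that pairing generically nonzero, $L_0^{-1}[g,h]$ acquires a logarithmic tail and the decay estimate $|H|\lesssim\tau^{-\nu}(1+|(u,v)|)^{-\sigma}\left(\|g\|_{a,\nu}+\|h\|_{a,\nu}\right)$, on which the remainder of your scheme hinges, fails. The paper's parabolic route sidesteps this: the Cauchy problem started from $\phi\equiv 0$ at $\tau=\tau_0$ with localized data automatically has $\|\phi\|_{\sigma,\tau_1}<\infty$ for every finite $\tau_1$ (there is no steady-state solvability obstruction), and in the blow-up limit $Z_1$ is excluded by the \emph{decay} $|\tilde\phi|\le (1+|(u,v)|)^{-\sigma}$ inherited from the normalization of the rescaled sequence rather than by an orthogonality hypothesis. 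To push through the Case 1 scheme here you would at minimum need to add $Z_1$-orthogonality to the hypotheses of the elliptic step, which Proposition \ref{proposition4.1} does not supply.
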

To prove this proposition, first we consider the following problem in the whole half space
\begin{equation}\label{e5:32}
\left\{
\begin{array}{lll}
\partial_\tau\phi = \Delta\phi + g(u, v, \tau)\text{ in }\mathbb{R}^2_+\times (\tau_0, +\infty),\\
-\frac{d}{dv}\phi = \frac{2}{1+\left|u\right|^2}\phi + \mathcal{A}[\tilde{\phi}] + h(u, \tau)\text{ in }\left(\mathbb{R}^2_+\times (\tau_0, +\infty)\right)\cap
\left(\partial\mathbb{R}^2_+\times (\tau_0, +\infty)\right),\\
\phi(u, v, \tau) = 0\quad\text{in }\mathbb{R}^2_+\times (-\infty, \tau_0],\\
\phi \cdot \omega = 0 \text{ in } \left(\mathbb{R}^2_+\times (\tau_0, +\infty)\right)\cap \left(\partial\mathbb{R}^2_+\times (\tau_0, +\infty)\right).
\end{array}
\right.
\end{equation}
Then we have
\begin{lemma}\label{l5:3}
Let $0 < \sigma < 1$, $\nu > 0$ be given positive numbers. Then, for $R > 0$ sufficiently large and any $g = g(u, v, \tau)$, $h = h(u, \tau)$ with $\|g\|_{2+\sigma, \nu} < +\infty$, $\|h\|_{1+\sigma, \nu} < +\infty$, the first components of $g(u, v, \tau)$ and $h(u, \tau)$ are even in the $u$ variable for all $\tau$, the second components of $g(u, v, \tau)$ and $h(u, \tau)$ are odd in the $u$ variable for all $\tau$, and satisfying
\begin{equation*}
\int_{\mathbb{R}^2_+}g(u, v, \tau)\cdot Z_2(u, v)dudv + \int_{\mathbb{R}}h(u,\tau)\cdot Z_2(u, 0)du = 0\text{ for all }\tau\in (\tau_0,\infty).
\end{equation*}
Then for sufficiently large $\tau_1 > \tau_0$, the solution of (\ref{e5:32}) satisfies
\begin{equation}\label{e5:35}
\|\phi(u, v, \tau)\|_{\sigma,\tau_1}\lesssim \|g\|_{2+\sigma, \tau_1} + \|h\|_{1+\sigma, \tau_1}.
\end{equation}
Here, $\|g\|_{b, \tau_1}:=\sup_{\tau\in (\tau_0,\tau_1)}\tau^\nu\|(1+|(u, v)|^b)g\|_{L^\infty(\mathbb{R}^2_+)}$.
\end{lemma}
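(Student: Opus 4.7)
The plan is to prove Lemma \ref{l5:3} by a blow-up/contradiction argument in the spirit of the inner linear theory developed in \cite{davila2017singularity} and \cite{SWY}, adapted to the Case 2 parity and to the fractional boundary operator $\mathcal{A}[\phi]$. Assume the estimate \eqref{e5:35} fails. Then there exist sequences $\tau_{1,n}\to\infty$, right-hand sides $(g_n,h_n)$ satisfying the Case 2 parity with $\|g_n\|_{2+\sigma,\tau_{1,n}}+\|h_n\|_{1+\sigma,\tau_{1,n}}\to 0$, and corresponding solutions $\phi_n$ of \eqref{e5:32} satisfying the orthogonality condition against $Z_2$, the vanishing initial condition $\phi_n\equiv 0$ for $\tau\le\tau_0$, and the normalization $\|\phi_n\|_{\sigma,\tau_{1,n}}=1$. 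I would then select $(u_n,v_n,\tau_n)\in\mathbb{R}^2_+\times(\tau_0,\tau_{1,n}]$ that achieves the weighted supremum up to a factor $1/2$, set $\rho_n:=1+|(u_n,v_n)|$, and split into the regimes $\rho_n=O(1)$ and $\rho_n\to\infty$.

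In the bounded regime, uniform interior and boundary parabolic regularity --- together with uniform control of $\mathcal{A}[\phi_n]$ obtained by combining the differenced structure $\phi_n(u,0,\tau)-\phi_n(u-z,0,\tau)$ (which tames the $|z|^{-2}$ singularity near $z=0$) with the outer weight $(1+|u|)^{-\sigma}$ far from the diagonal --- extract a locally uniform subsequential limit $\phi_\infty$ on $\mathbb{R}^2_+\times\mathbb{R}$, after a possible time translation if $\tau_n\to\infty$. Since $(g_n,h_n)\to 0$, $\phi_\infty$ solves the homogeneous inner linearized problem about $\omega$, has Case 2 parity, decays as $(1+|(u,v)|)^{-\sigma}$, and is ancient (or vanishes at $\tau_0$ if $\tau_n$ is bounded). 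Parabolic energy monotonicity then drives $\phi_\infty$ to a stationary solution, and by the nondegeneracy of $\omega$ proved in \cite{sire2017nondegeneracy} the static bounded kernel equals $\mathrm{span}\{Z_1,Z_2,Z_3\}$. The Case 2 parity eliminates $Z_3$, the decay condition eliminates the bounded but non-decaying mode $Z_1$, and the orthogonality condition against $Z_2$ (which passes to the limit by dominated convergence) eliminates the translational mode $Z_2$. Hence $\phi_\infty\equiv 0$, contradicting the uniform lower bound $\rho_n^\sigma\tau_n^\nu|\phi_n(u_n,v_n,\tau_n)|\ge 1/2$.

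In the unbounded regime $\rho_n\to\infty$, I would perform the parabolic rescaling $\tilde\phi_n(z,s):=\rho_n^\sigma\phi_n(\rho_n z,\tau_n+\rho_n^2 s)$, centered so that the base point becomes a fixed point at unit distance from the origin in the rescaled variables. The potential $2/(1+u^2)$ and the nonlocal operator $\mathcal{A}$ each acquire a prefactor $\rho_n^{-2}$ and disappear in the limit; the rescaled data vanish by hypothesis; and the decay bound transforms into $|\tilde\phi_n(z,s)|\le |z|^{-\sigma}$. The limit $\tilde\phi_\infty$ is a bounded caloric function on $\mathbb{R}^2$ (or on $\mathbb{R}^2_+$ with homogeneous Neumann condition on the flat boundary, if the base point lies on $\partial\mathbb{R}^2_+$), decays at infinity, and therefore vanishes by a standard parabolic Liouville argument --- again contradicting the lower bound at the base point. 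The time factor $\tau^{-\nu}$ in the weighted norm is absorbed either into the normalization (when $\tau_n$ stays bounded) or into the parabolic rescaling (when $\tau_n\to\infty$).

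The hardest step is the uniform handling of the nonlocal boundary operator $\mathcal{A}[\phi_n]$: one needs a bound on $\mathcal{A}[\phi_n]$ independent of $n$ and of $\tau_{1,n}$, which requires using the H\"older regularity of $\phi_n$ near $z=0$ provided by parabolic estimates, together with the outer decay $(1+|u|)^{-\sigma}$, and then verifying via dominated convergence that $\mathcal{A}[\phi_n]\to\mathcal{A}[\phi_\infty]$. A related subtlety is that the restriction $\sigma\in(0,1)$ in the statement is precisely what makes the local cancellation from the differenced structure and the global $|u|^{-\sigma}$ decay compatible with the integrability of the $|z|^{-2}$ kernel; for $\sigma\ge 1$ the global bound breaks down, while for $\sigma\le 0$ no decay is available to exclude $Z_1$ in the blow-up limit.
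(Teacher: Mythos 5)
Your overall strategy — a blow-up/contradiction argument with a dichotomy between bounded and unbounded base points, passing to an ancient homogeneous limit, and using nondegeneracy to identify the kernel — is indeed the paper's approach. However, there are two genuine gaps where your argument glosses over precisely the two nontrivial steps that make the proof work.

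First, to eliminate the $Z_2$ mode in the bounded regime you invoke ``the orthogonality condition against $Z_2$ (which passes to the limit by dominated convergence).'' But the hypothesis of the lemma is orthogonality of $(g,h)$ against $Z_2$, not of $\phi$. What you need is $\int_{\mathbb{R}^2_+}\phi_n(\cdot,\tau)\cdot Z_2 = 0$ for all $\tau$, so that the limit $\phi_\infty$ inherits it. This must be derived: the paper does it as a separate preliminary step (after first proving $\|\phi\|_{\sigma,\tau_1}<\infty$ by a super-solution argument, which you also omit), by multiplying the equation by $\eta_R Z_2$, integrating, using that $Z_2$ lies in the kernel of $L_\omega$, that $(g,h)\perp Z_2$ by hypothesis, and that $\phi=0$ at $\tau_0$, then letting $R\to\infty$. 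Without this derivation, dominated convergence has nothing to pass to the limit, and the nondegeneracy argument does not close.

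Second, in the unbounded regime you describe the rescaled limit $\tilde\phi_\infty$ as ``a bounded caloric function'' that vanishes by a ``standard parabolic Liouville argument.'' In fact the limit is not bounded: after parabolic rescaling the weighted decay $\rho_n^\sigma|\phi_n|\le(1+|(u,v)|)^{-\sigma}$ becomes $|\tilde\phi(z,s)|\le |z-\hat e|^{-\sigma}$ for a unit vector $\hat e$, which blows up as $z\to\hat e$ (the image of the original blow-up point) and only decays at infinity. A standard Liouville theorem for bounded caloric functions does not apply. The paper handles this with the tailor-made Lemma \ref{l4.3}: a scalar Liouville theorem for ancient solutions on $\mathbb{R}^2_+$ with a Neumann condition on $\partial\mathbb{R}^2_+\setminus\{0\}$, a growth bound $|(u,v)|^{-\sigma}$ that is singular at the origin, and crucially an oddness assumption in $u$, proved via an explicit super-solution $v^\gamma(u^2+v^2+\tau)^{-\beta}+\varepsilon v/(u^2+v^2)$. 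The restriction $\sigma$ small and $\gamma$ close to $1$ enters exactly here (to make this a super-solution), not in the integrability of the $|z|^{-2}$ kernel as you suggest. Your proposal does not identify this structure — the reduction of the vector limit to a scalar one (via the constraint $\phi\cdot\omega=0$ and $\omega(\infty)=(0,1)$), the residual parity, or the singular Liouville lemma — and so the unbounded regime is not actually handled.
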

\begin{proof}
First, we claim that $\|\phi\|_{\sigma, \tau_1} < +\infty$ holds for any given $\tau_1 > \tau_0$. Given $R > 0$ there exists a $K = K(R,\tau_1) > 0$ such that
\begin{equation*}
|\phi(u, v,\tau)|\leq K \quad\text{in }B_R(0)\times (\tau_0, \tau_1].
\end{equation*}
Fix $R > 0 $ and $K_1 > 0$ sufficiently large, $K_1\rho^{-\sigma}$ ($\rho = |(u, v)|$) is a super-solution for (\ref{e5:32}). Therefore $|\phi|\leq 2K_1\rho^{-\sigma}$ and $\|\phi\|_{\sigma,\tau_1} < +\infty$ for any $\tau_1 > 0$. We claim that
\begin{equation}\label{e5:33}
\int_{\mathbb{R}^2_+}\phi(u, v, \tau)\cdot Z_2(u, v)dudv = 0\text{ for all }\tau\in (\tau_0,\tau_1).
\end{equation}
Indeed, test the equation against
\begin{equation*}
Z_2\eta,\quad \eta(u, v) = \eta_0(\frac{\rho}{R})
\end{equation*}
with $\eta_0$ being a smooth cut-off function satisfying $\eta_0(r) = 1$ for $r < 1$ and $r = 0$ for $r > 2$, $R$ is a large constant. We obtain
\begin{equation*}
\begin{aligned}
\int_{\mathbb{R}^2_+}\phi(\cdot, \tau)\cdot Z_2\eta & = \int_{0}^\tau ds\left(\int_{\mathbb{R}^2_+}\phi\cdot\Delta(\eta Z_2)dudv + \int_{\mathbb{R}^2_+}g\cdot \eta Z_2dudv \right)\\
&\quad + \int_{0}^\tau ds\left(\int_{\mathbb{R}}\phi\cdot\left(\frac{d}{dv}(\eta Z_2) + \frac{1}{1+|u|^2}\eta Z_2\right)du + \int_{\mathbb{R}}h\cdot \eta Z_2du\right).
\end{aligned}
\end{equation*}
On the other hand, we have
\begin{equation*}
\begin{aligned}
& \int_{\mathbb{R}^2_+}\phi\cdot\Delta(\eta Z_2)dudv + \int_{\mathbb{R}^2_+}g\cdot \eta Z_2dudv \\
&\quad + \int_{\mathbb{R}}\phi\cdot\left(\frac{d}{dv}(\eta Z_2) + \frac{1}{1+|u|^2}\eta Z_2\right)du + \int_{\mathbb{R}}h\cdot \eta Z_2du  = O(R^{-\sigma})
\end{aligned}
\end{equation*}
uniformly on $\tau\in (0,\tau_1)$. Letting $R\to +\infty$, we then have (\ref{e5:33}).

Now we claim that for $\tau_1 > \tau_0$ large enough, any solution $\phi$ of (\ref{e5:32}) with $\|\phi\|_{\sigma,\tau_1} < +\infty$ and (\ref{e5:33}) satisfies the estimate
\begin{equation}\label{e5:34}
\|\phi\|_{\sigma,\tau_1}\lesssim \|h\|_{1+\sigma,\tau_1} + \|g\|_{2+\sigma,\tau_1}.
\end{equation}
Therefore (\ref{e5:35}) is valid.

To prove (\ref{e5:34}), by contradiction, we assume that there exist sequences $\tau_1^k\to +\infty$ and $\phi_k$, $g_k$, $h_k$ satisfying
\begin{equation*}\label{e5:36}
\begin{aligned}
&\partial_\tau \phi_k = \Delta\phi_k + g_k\text{ in }\mathbb{R}^2_+\times (\tau_0, +\infty), \\
&-\frac{d}{dv}\phi_k =  \frac{2}{1+\left|u\right|^2}\phi_k + \mathcal{A}[\phi_k] + h_k\text{ in }\left(\mathbb{R}^2_+\times (\tau_0, +\infty)\right)\cap\left(\partial\mathbb{R}^2_+\times (\tau_0, +\infty)\right),\\
&\int_{\mathbb{R}^2_+}\phi_k(u, v, \tau)\cdot Z_2(u, v)dudv = 0\text{ for all }\tau\in (\tau_0,\tau_1^k), \\
&\phi_k(u, v, \tau) = 0\text{ for } (u, v, \tau)\in \mathbb{R}^2_+\times (-\infty, \tau_0].
\end{aligned}
\end{equation*}
and
\begin{equation}\label{e5:38}
\|\phi_k\|_{\sigma,\tau_1^k} = 1,\quad \|g_k\|_{2+\sigma,\tau_1^k}\to 0,\quad \|h_k\|_{1+\sigma,\tau_1^k}\to 0.
\end{equation}
First we claim that
\begin{equation}\label{e5:37}
\sup_{\tau_0 < \tau < \tau_1^k}\tau^\nu|\phi_k(u, v,\tau)|\to 0
\end{equation}
holds uniformly on compact subsets of $\mathbb{R}^2_+$. If not, for some $|(u_k, v_k)|\leq M$ and $\tau_0 < \tau_2^k < \tau_1^k$, there holds
\begin{equation*}
(\tau_2^k)^\nu(1+|(u_k, v_k)|^\sigma)|\phi(u_k, v_k,\tau_2^k)|\geq \frac{1}{2}.
\end{equation*}
Clearly, $\tau_2^k\to +\infty$. Define
\begin{equation*}
\tilde{\phi}_n(u, v,\tau) = (\tau_2^k)^\nu\phi_n(u, v,\tau_2^k + \tau).
\end{equation*}
Then we have
\begin{equation*}
\partial_\tau\tilde{\phi}_k = \Delta\tilde{\phi}_k + \tilde{g}_k\quad\text{in }\mathbb{R}^2_+\times (\tau_0-\tau_2^k,0]
\end{equation*}
\begin{equation*}
\begin{aligned}
&-\frac{d}{dv}\tilde{\phi}_k = \frac{2}{1+|u|^2}\tilde{\phi}_k + \mathcal{A}[\tilde{\phi}_k] + \tilde{h}_k\text{ in }\left(\mathbb{R}^2_+\times (\tau_0-\tau_2^k,0]\right)\cap
\left(\partial\mathbb{R}^2_+\times (\tau_0-\tau_2^k,0]\right)
\end{aligned}
\end{equation*}
where $\tilde{h}_k\to 0$ uniformly on compact subsets of $\mathbb{R}\times (-\infty, 0]$ and
\begin{equation*}
|\tilde{\phi}_k(u, v,\tau)|\leq \frac{1}{1+|(u, v)|^\sigma}\text{ in }\mathbb{R}^2_+\times (\tau_0-\tau_2^k,0].
\end{equation*}
By parabolic estimates and passing to a subsequence, $\tilde{\phi}_k\to\tilde{\phi}$ uniformly on compact subsets of $\mathbb{R}^2_+\times (-\infty, 0]$, $\tilde{\phi}\neq 0$ and
\begin{equation*}
\begin{aligned}
&\partial_\tau \tilde{\phi} = \Delta\tilde{\phi}\text{ in }\mathbb{R}^2_+\times (-\infty, 0],\\
&-\frac{d}{dv}\tilde{\phi} = \frac{2}{1+|u|^2}\tilde{\phi} + \mathcal{A}[\tilde{\phi}]\text{ in }\mathbb{R}\times (-\infty, 0],\\
&\int_{\mathbb{R}^2_+}\tilde{\phi}(u, v, \tau)\cdot Z_2(u, v)dudv = 0\text{ for all }\tau\in (-\infty, 0],\\
&|\tilde{\phi}(u, v,\tau)|\leq \frac{1}{1+|(u, v)|^\sigma}\quad\text{in }\mathbb{R}^2_+\times (-\infty, 0].
\end{aligned}
\end{equation*}
We prove that $\tilde{\phi} = 0$ from which we get a contradiction. From standard parabolic regularity theory, $\tilde{\phi}(u, v, \tau)$ is smooth. Testing the first equation above with $\tilde{\phi}$ we have
\begin{equation*}
\frac{1}{2}\partial_\tau\int_{\mathbb{R}^2_+}|\tilde{\phi}_\tau|^2 + B(\tilde{\phi}_\tau, \tilde{\phi}_\tau) = 0
\end{equation*}
where
\begin{equation*}
B(\tilde{\phi}, \tilde{\phi}) = \int_{\mathbb{R}^2_+}|\nabla\tilde{\phi}|^2dudv - \int_{\mathbb{R}}\frac{2}{1+|u|^2}\tilde{\phi}^2(u, 0)du.
\end{equation*}
Clearly, $B(\tilde{\phi}, \tilde{\phi})\geq 0$ and there holds
\begin{equation*}
\int_{\mathbb{R}^2_+}|\tilde{\phi}_\tau|^2 = -\frac{1}{2}\partial_\tau B(\tilde{\phi}, \tilde{\phi}) = 0.
\end{equation*}
Therefore
\begin{equation*}
\partial_\tau\int_{\mathbb{R}^2_+}|\tilde{\phi}_\tau|^2 \leq 0,\quad \int_{-\infty}^0d\tau\int_{\mathbb{R}^2_+}|\tilde{\phi}|^2 < +\infty
\end{equation*}
and hence $\tilde{\phi}_\tau = 0$. Thus $\tilde{\phi}$ is independent of $\tau$ and
\begin{eqnarray*}
\Delta\tilde{\phi} = 0\text{ in }\mathbb{R}^2_+\times (-\infty, 0],\\
-\frac{d}{dv}\tilde{\phi} = \frac{2}{1+|u|^2}\tilde{\phi} + \mathcal{A}[\tilde{\phi}]\text{ in }\partial\mathbb{R}^2_+\times (-\infty, 0].
\end{eqnarray*}
Since $\tilde{\phi}$ is bounded, the nondegeneracy result in \cite{sire2017nondegeneracy} implies that $\tilde{\phi}= cZ_2$ for some constant $c$. Since $\int_{\mathbb{R}^2_+}\tilde{\phi}\cdot Z_2dudv = 0$, $\tilde{\phi} = 0$, which is a contradiction. Thus (\ref{e5:37}) holds.
From (\ref{e5:38}), for a certain $(u_n, v_n)$ with $|(u_n, v_n)|\to +\infty$ there holds
\begin{equation*}
(\tau_2^k)^\nu|(u_k, v_k)|^\sigma|\phi_k(u_k, v_k, \tau_2^k)|\geq \frac{1}{2}.
\end{equation*}
Define
\begin{equation*}
\tilde{\phi}_n(z, \tau):=(\tau_2^k)^\nu|(u_k, v_k)|^\sigma\phi_k((u_k, v_k)+|(u_k, v_k)|z,|(u_k, v_k)|\tau + \tau_2^k),
\end{equation*}
we have
\begin{eqnarray*}
\partial_\tau \tilde{\phi}_k = \Delta\tilde{\phi}_k + \tilde{g}_k(z,\tau),\\
-\frac{d}{dv}\tilde{\phi}_k = a_k\tilde{\phi}_k + \tilde{h}_k(z,\tau)
\end{eqnarray*}
with
\begin{equation*}
\tilde{h}_k(z,\tau) = (\tau_2^k)^\nu|(u_k, v_k)|^{1+\sigma}h_k((u_k, v_k)+|(u_k, v_k)|z,|(u_k, v_k)|\tau + \tau_2^k).
\end{equation*}
By the assumption on $h_k$, we obtain
\begin{equation*}
|\tilde{h}_k(z,\tau)| \lesssim o(1)|(\hat{u}_k, \hat{v}_k)+z|^{-1-\sigma}((\tau_2^k)^{-1}|(u_k, v_k)|\tau + 1)^{-\nu}
\end{equation*}
with
\begin{equation*}
(\hat{u}_k, \hat{v}_k) = \frac{(u_k, v_k)}{|(u_k, v_k)|}\to -\hat{e}
\end{equation*}
and $|\hat{e}|= 1$. Thus $\tilde{h}_k(z,\tau)\to 0$ on compact subsets of $\mathbb{R}\setminus\{\hat{e}\}\times (-\infty, 0]$ uniformly. The same property holds for $a_n$. Moreover, $|\tilde{\phi}_k|\geq \frac{1}{2}$ and
\begin{equation*}
|\tilde{\phi}_k(z,\tau)| \lesssim |(u_k, v_k)+z|^{-\sigma}((\tau_2^k)^{-1}|(u_k, v_k)|\tau + 1)^{-\nu}.
\end{equation*}
Therefore, $\tilde{\phi}_k\to \tilde{\phi}\neq 0$ uniformly over compact subsets of $\mathbb{R}\setminus\{\hat{e}\}\times (-\infty,0]$ and
\begin{eqnarray}\label{e5:39}
\partial_\tau \tilde{\phi} = \Delta\tilde{\phi}\text{ in }\mathbb{R}^2_+\times (-\infty,0],\\
-\frac{d}{dv}\tilde{\phi} = 0\quad\text{in }\mathbb{R}\setminus\{\hat{e}\}\times (-\infty,0],
\end{eqnarray}
\begin{equation}\label{e5:40}
|\tilde{\phi}(z,\tau)|\leq |z-\hat{e}|^{-\sigma}\text{ in }\mathbb{R}^2_+\setminus\{\hat{e}\}\times (-\infty,0].
\end{equation}
Note that $\tilde{\phi}$ is of form $\tilde{\phi} = \begin{pmatrix}\tilde{\phi}_1\\ \tilde{\phi}_2\end{pmatrix} = \begin{pmatrix}\tilde{\phi}_1\\ 0\end{pmatrix}$ and $\tilde{\phi}_1$ is odd in the $u$ variable. By Lemma \ref{l4.3}, functions $\tilde{\phi}$ satisfying (\ref{e5:39})-(\ref{e5:40}) is zero, which is a contradiction.
This completes the proof.
\end{proof}
\begin{lemma}\label{l4.3}
Let $\phi = \phi(u, v, \tau)$ be a scalar solution of
\begin{equation}\label{e:limiteequation}
\left\{
\begin{array}{lll}
\partial_\tau \phi = \Delta\phi\text{ in }\mathbb{R}^2_+\times (-\infty,0],\\
-\frac{d}{dv}\phi = 0 \text{ in }\partial\mathbb{R}^2_+\setminus\{(0,0)\}\times (-\infty,0],\\
|\phi(u, v,\tau)|\leq |(u, v)|^{-\sigma}\text{ in }\mathbb{R}^2_+\setminus\{(0,0)\}\times (-\infty,0],
\end{array}
\right.
\end{equation}
for $0<\sigma<1$ small enough, $\phi(u, v, \tau)$ is odd in the variable $u$ for all $v$ and $\tau$, then $\phi \equiv 0$ on $\mathbb{R}^2_+\times (-\infty,0]$.
\end{lemma}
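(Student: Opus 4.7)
The plan is to reflect $\phi$ symmetrically, extend it to all of $\mathbb{R}^2 \times (-\infty, 0]$, remove the spatial singularity at the origin, and then conclude by a Liouville-type argument based on the heat kernel representation formula.

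First, since $\phi$ satisfies the homogeneous Neumann condition on $\partial \mathbb{R}^2_+ \setminus \{(0,0)\}$, I would extend $\phi$ by even reflection in $v$ to a function $\tilde\phi$ defined on $(\mathbb{R}^2 \setminus \{(0,0)\}) \times (-\infty, 0]$. This extension is a classical solution of $\partial_\tau \tilde\phi = \Delta \tilde\phi$ on the punctured plane, is odd in $u$ and even in $v$, and retains the decay $|\tilde\phi(z, \tau)| \leq |z|^{-\sigma}$.

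The key step is to show that the spatial singularity at the origin is removable. Working in polar coordinates $z = (r\cos\theta, r\sin\theta)$, the combined oddness in $u$ and evenness in $v$ force the angular Fourier expansion to contain only odd cosine modes,
\begin{equation*}
\tilde\phi(r, \theta, \tau) = \sum_{n \geq 1,\, n \text{ odd}} a_n(r, \tau) \cos(n\theta),
\end{equation*}
with each $a_n$ solving the radial equation $\partial_\tau a_n = \partial_r^2 a_n + r^{-1}\partial_r a_n - n^2 r^{-2} a_n$ and satisfying $|a_n(r,\tau)| \lesssim r^{-\sigma}$. The radial spatial operator has homogeneous solutions $r^n$ and $r^{-n}$ near $r = 0$; since $\sigma < 1 \leq n$ for every odd $n \geq 1$, the singular solution $r^{-n}$ is incompatible with the pointwise bound, forcing $a_n(r, \tau) = O(r^n)$ as $r \to 0$. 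Consequently $\tilde\phi$ is bounded near the origin, and after removal it is a classical solution of the heat equation on all of $\mathbb{R}^2 \times (-\infty, 0]$, still satisfying $|\tilde\phi(z, \tau)| \leq |z|^{-\sigma}$ for $|z| \geq 1$.

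To conclude, for any $\tau_* < \tau$ I would invoke the heat kernel representation
\begin{equation*}
\tilde\phi(z, \tau) = \int_{\mathbb{R}^2} G(z - z', \tau - \tau_*)\, \tilde\phi(z', \tau_*)\, dz', \qquad G(w, t) = \frac{1}{4\pi t}\, e^{-|w|^2/(4t)}.
\end{equation*}
Splitting the integral into $\{|z'| \leq 1\}$, where $\tilde\phi$ is bounded and the heat kernel contributes $O(1/(\tau-\tau_*))$, and $\{|z'| > 1\}$, where the rescaling $z' = \sqrt{\tau - \tau_*}\, w'$ gives $\int G(z - z', \tau - \tau_*) |z'|^{-\sigma}\, dz' \lesssim (\tau - \tau_*)^{-\sigma/2}$, one sees that both pieces vanish as $\tau_* \to -\infty$. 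Hence $\tilde\phi \equiv 0$, which yields $\phi \equiv 0$.

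The hard part is the removability step. In two space dimensions the logarithmic fundamental solution means that the bound $|\tilde\phi| \leq |z|^{-\sigma}$ alone is not sufficient to kill a point singularity at the origin: a naive cutoff argument produces an uncontrolled gradient contribution of order $|z|^{-\sigma - 1}$. The odd-in-$u$ symmetry is what saves the argument, by ruling out precisely the angular modes whose singular homogeneous counterparts could behave like $\log r$ or $r^{-1}$, and leaving only those whose singular exponent is strictly below $-\sigma$.
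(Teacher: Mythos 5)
Your route is genuinely different from the paper's. The paper's proof is a direct parabolic barrier/comparison argument: it constructs an explicit supersolution of the form
\begin{equation*}
\Phi(u,v,\tau) \;=\; \frac{v^{\gamma}}{(u^2+v^2+\tau)^{\beta}} \;+\; \frac{\varepsilon\,v}{u^2+v^2}, \qquad \gamma\in(0,1),\ \ 2\beta-\gamma=\sigma,
\end{equation*}
verifies $-\Phi_\tau+\Delta\Phi<0$, compares $|\phi|$ with $\Phi(\cdot,\cdot,\tau+M)$ on $[-M,0]$, and then sends $M\to\infty$ and $\varepsilon\to 0$. One merit of the barrier is that the point singularity at the origin is handled for free, since $\Phi$ itself blows up like $r^{-1}$ along generic rays, so no removability lemma is needed; the singular behavior is dominated pointwise. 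Your approach (even reflection, remove the singularity, then a heat-kernel Liouville argument) is conceptually clean, and your final Liouville step is correct: after removal, $\tilde\phi$ is a \emph{bounded} caloric function on $\mathbb{R}^2\times(-\infty,0]$, the heat-kernel representation holds by Tychonoff-class uniqueness, and your two-region estimate correctly gives $\tilde\phi\to 0$ as $\tau_*\to-\infty$.

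The gap is in the removability step. You argue that, since the radial \emph{spatial} operator has homogeneous solutions $r^{\pm n}$ and $\sigma<1\le n$, the bound $|a_n|\lesssim r^{-\sigma}$ forces $a_n=O(r^n)$. But $a_n$ solves the \emph{parabolic} equation $\partial_\tau a_n = a_{n,rr}+r^{-1}a_{n,r}-n^2r^{-2}a_n$, not the stationary ODE, and the near-origin asymptotics of a caloric function with angular momentum $n$ are not automatically dictated by the two Frobenius exponents of the elliptic part; that is precisely what must be proven. Even granting $a_n=O(r^n)$ for each fixed $n$, you would still need uniform-in-$n$ control of the implied constants to sum the Fourier series and deduce that $\tilde\phi$ itself is bounded near the origin, which the argument does not provide. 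As stated, the removability claim is a plausibility argument rather than a proof.

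A clean way to close the gap while keeping your strategy is to prove removability distributionally and then invoke parabolic regularity. Since $\sigma<2$, $\tilde\phi\in L^1_{\mathrm{loc}}$, and it suffices to show $\iint \tilde\phi\,(-\psi_\tau-\Delta\psi)\,dz\,d\tau=0$ for all test functions $\psi$. Testing against $\psi(1-\chi_\delta)$ with $\chi_\delta$ a radial cutoff in $\{r\le\delta\}$, the commutator errors are $\iint\tilde\phi\,(2\nabla\psi\cdot\nabla\chi_\delta+\psi\,\Delta\chi_\delta)$. The gradient term is $O(\delta^{1-\sigma})\to 0$. In the Laplacian term, the potentially non-vanishing piece is $\iint\tilde\phi\,\psi(0,\tau)\,\Delta\chi_\delta$, of naive size $\delta^{-\sigma}$; here the oddness of $\tilde\phi$ in $u$ against the radial $\Delta\chi_\delta$ makes this contribution vanish identically, and the remaining Taylor remainder $\psi-\psi(0,\tau)=O(r)$ gives $O(\delta^{1-\sigma})\to 0$. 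This makes rigorous exactly the heuristic you describe: oddness kills the logarithmic mode, and $\sigma<1$ kills the $r^{-1}$ and higher singular modes. Alternatively, one can run a barrier argument (as the paper does) mode by mode, or globally.
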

\begin{proof}
Inspired by the proof of Lemma 4.2 in \cite{LinWeiCPAM2010}, we set
$$
\Phi(u, v, \tau)=\frac{v^\gamma}{(u^2 + v^2 + \tau)^{\beta}} +\frac{\varepsilon v}{u^2 + v^2},\quad \gamma\in (0, 1),\quad 2\beta-\gamma = \sigma.
$$
Then
\begin{equation*}
\begin{aligned}
&-\Phi_\tau + \Delta \Phi \\
&= v^{\gamma -2}\left(\tau+u^2+v^2\right)^{-\beta -2}\left(-\gamma\left(\tau+u^2+v^2\right)\left(\tau+u^2+(4\beta +1) v^2\right)\right)\\
&\quad + v^{\gamma-2}\left(\tau+u^2+v^2\right)^{-\beta -2}\left(\beta v^2 \left((4\beta +1)\left(u^2+v^2\right)-3\tau\right)+\gamma ^2\left(\tau+u^2+v^2\right)^2\right)\\
& < \beta(4\beta-4\gamma^2+1)v^{\gamma}\left(\tau+u^2+v^2\right)^{-\beta -1}\\
& = \beta(2\sigma+2\gamma-4\gamma^2+1)v^{\gamma}\left(\tau+u^2+v^2\right)^{-\beta -1} < 0.
\end{aligned}
\end{equation*}
if we choose $\sigma$ sufficiently small and $\gamma \in (0, 1)$ sufficiently close to 1. Then the function $\Phi(u, v,\tau +M)$ is a positive super-solution of equation (\ref{e:limiteequation}) in $\mathbb{R}^2_+\times [-M, 0]$. Hence $|\phi(u, v,\tau)| \le \Phi(u, v,\tau +M)$. Letting $M\to +\infty$ we have
$$
|\phi(u, v, \tau)|\leq\frac{\varepsilon v}{u^2 + v^2}.
$$
Since $\varepsilon$ is arbitrary, $\phi \equiv 0$.
\end{proof}
{\it Proof of Proposition \ref{proposition4.1}}.
Let $\phi$ be the unique solution of (\ref{e5:32}), from Lemma \ref{l5:3}, for any $\tau_1 > 0$, we have
\begin{equation*}
|\phi(u, v, \tau)|\leq C\tau^{-\nu}(1+|(u, v)|)^{-\sigma}\left(\|g\|_{2+\sigma, \tau_1} + \|h\|_{1+\sigma, \tau_1}\right).
\end{equation*}
Since $\|g\|_{a,v} < +\infty$, we get
\begin{equation*}
|g(u, v, \tau)|\leq C\tau^{-\nu}(1+|(u, v)|)^{-a}\|g\|_{a,\nu}
\end{equation*}
and
\begin{equation*}
\|g\|_{2+\sigma, \tau_1}\leq R^{2+\sigma-a}\|g\|_{a,\nu}.
\end{equation*}
Similarly, we have
\begin{equation*}
\|h\|_{1+\sigma, \tau_1}\leq R^{1+\sigma-a}\|h\|_{a,\nu}.
\end{equation*}
Therefore
\begin{equation*}
|\phi(u, v, \tau)|\leq C\tau^{-\nu}(1+|(u, v)|)^{-\sigma}\left(R^{2+\sigma-a}\|g\|_{a,\nu} + R^{1+\sigma-a}\|h\|_{a,\nu}\right).
\end{equation*}
\qed

\subsection{The whole linear theory.} Combine Propositions \ref{prop1} and \ref{proposition4.1}, we obtain the main result of this section.
\begin{prop} \label{prop1section4final}
Let $1< a < 2$, $\nu > 0$ be given positive numbers. Then, for any $g$, $h$ with $\|g\|_{a, \nu} <+\infty$, $\|h\|_{a, \nu} <+\infty$ and satisfying \begin{equation}\label{e:orthognalconditionz3final2}
\int_{B^+_{2R}}g(u, v, \tau)\cdot Z_2(u, v)dudv + \int_{-2R}^{2R}h(u,\tau)\cdot Z_2(u)du = 0\quad\text{for all }\tau\in (\tau_0,\infty)
\end{equation}
\begin{equation}\label{e:orthognalconditionz3final3}
\int_{B^+_{2R}}g(u, v, \tau)\cdot Z_3(u, v)dudv + \int_{-2R}^{2R}h(u,\tau)\cdot Z_3(u)du = 0\quad\text{for all }\tau\in (\tau_0,\infty)
\end{equation}
there exist $\phi = \phi[g, h]$ solving (\ref{e:innerproblemlinear}) which defines a bounded linear operators of $g$ and $h$. Furthermore, for some $\sigma\in (0, 1)$, we have the following estimate
\begin{equation*}
\begin{aligned}
&|\phi[g, h]|\lesssim\\
&\quad \tau^{-\nu}R^{2}\log^2 R\left(\frac{R^{2-a}\|h^0\|_{a,\nu}}{1+|(u, v)|}+\frac{R^{4-a}\|g^0\|_{a,\nu}}{(1+|(u, v)|)^{2}}\right)\\
&\quad +\tau^{-\nu}\left(\frac{R^{1+\sigma-a}\|h^1\|_{a, \nu}}{(1+|u|)^{\sigma}} + \frac{R^{2+\sigma-a}\|g^1\|_{a, \nu}}{(1+\left|(u, v)\right|)^{\sigma}}\right).
\end{aligned}
\end{equation*}
Here $g = g^0+g^1$, the first component of $g^0$ and the second component of $g^1$ are odd in the $u$ variable, the second component of $g^0$ and the first component of $g^1$ are even in the $u$ variable. We decompose $h = h^0 + h^1$ similarly.
\end{prop}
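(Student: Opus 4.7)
The plan is to reduce the combined statement to a parity-based superposition of Propositions \ref{prop1} and \ref{proposition4.1}. The key observation is that the linear operator on the left-hand side of \eqref{e:innerproblemlinear} is invariant under the two parity symmetries that distinguish Case 1 and Case 2: if the first component of the data is odd (resp. even) in $u$ and the second component is even (resp. odd), then the unique solution inherits the same symmetry. This is because $\omega(u,0)$, the weight $\frac{2}{1+|u|^2}$, and the nonlocal operator $\mathcal{A}[\cdot]$ (which involves the even kernel $|u-z|^{-2}$ against $\omega(u,0)-\omega(u-z,0)$) all respect these symmetries.

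First I would split $g = g^0 + g^1$ and $h = h^0 + h^1$ uniquely according to the parity types prescribed in the statement: $g^0, h^0$ carry the symmetries of Case 1 (first component odd in $u$, second component even), and $g^1, h^1$ carry the symmetries of Case 2 (first component even, second component odd). Inspecting the explicit formulas for $Z_2$ and $Z_3$, one sees that $Z_3$ has (odd, even) symmetry while $Z_2$ has (even, odd) symmetry. Consequently the cross-pairings vanish by oddness of the $u$-integration over $(-2R,2R)$ and over $B^+_{2R}$: $\int g^1\cdot Z_3 = \int h^1 \cdot Z_3 = 0$ and $\int g^0 \cdot Z_2 = \int h^0 \cdot Z_2 = 0$. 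Therefore the hypotheses \eqref{e:orthognalconditionz3final2} and \eqref{e:orthognalconditionz3final3} decouple into the separate orthogonality conditions required by Propositions \ref{prop1} (for $(g^0,h^0)$ against $Z_3$) and \ref{proposition4.1} (for $(g^1,h^1)$ against $Z_2$).

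Next, I would apply Proposition \ref{prop1} to obtain a solution $\phi^0 = \phi[g^0,h^0]$ of \eqref{e:innerproblemlinear} with data $(g^0,h^0)$, satisfying
\begin{equation*}
|\phi^0(u,v,\tau)|\lesssim \tau^{-\nu}R^{2}\log^2 R\left(\frac{R^{2-a}\|h^0\|_{a,\nu}}{1+|(u,v)|}+\frac{R^{4-a}\|g^0\|_{a,\nu}}{(1+|(u,v)|)^{2}}\right),
\end{equation*}
and Proposition \ref{proposition4.1} to obtain a solution $\phi^1 = \phi[g^1,h^1]$ of \eqref{e:innerproblemlinear} with data $(g^1,h^1)$, satisfying
\begin{equation*}
|\phi^1(u,v,\tau)|\lesssim \tau^{-\nu}\left(\frac{R^{1+\sigma-a}\|h^1\|_{a,\nu}}{(1+|u|)^{\sigma}}+\frac{R^{2+\sigma-a}\|g^1\|_{a,\nu}}{(1+|(u,v)|)^{\sigma}}\right).
\end{equation*}
By the parity-preservation remark above, $\phi^0$ and $\phi^1$ inherit the Case 1 and Case 2 symmetries respectively, so they live in complementary subspaces. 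Setting $\phi := \phi^0 + \phi^1$, linearity of \eqref{e:innerproblemlinear} yields a solution with data $g = g^0+g^1$ and $h = h^0+h^1$. Adding the two estimates gives the bound claimed in the proposition, and the map $(g,h)\mapsto \phi$ is bounded and linear because each of the two constituent maps is.

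The only genuine verification required is the symmetry claim used to decouple the problem: one must check that the operator $\phi\mapsto (-\partial_\tau\phi+\Delta\phi,\,-\tfrac{d}{dv}\phi - \tfrac{2}{1+|u|^2}\phi - \mathcal{A}[\phi])$ commutes with the reflection $(u,v)\mapsto(-u,v)$ acting componentwise as $(\phi_1,\phi_2)\mapsto(-\phi_1,\phi_2)$ (Case 1) or $(\phi_1,\phi_2)\mapsto(\phi_1,-\phi_2)$ (Case 2). This is the main (but mild) obstacle and is a direct computation using the parity of each term; once it is in hand, the uniqueness components of Propositions \ref{prop1} and \ref{proposition4.1} pin down $\phi^0$ and $\phi^1$ in their respective symmetry classes and the superposition argument closes the proof.
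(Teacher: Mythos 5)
Your proof is correct and matches the paper's intended argument: the paper offers no explicit proof of Proposition \ref{prop1section4final}, merely the one-line remark ``Combine Propositions \ref{prop1} and \ref{proposition4.1},'' and your parity decomposition of $(g,h)$, the observation that $Z_3$ is (odd, even) and $Z_2$ is (even, odd) in $u$ so that the two orthogonality hypotheses decouple into the separate hypotheses of the two case-propositions, and the linear superposition $\phi=\phi^0+\phi^1$ constitute exactly the content behind that remark. One small simplification: you do not actually need the symmetry-commutation check or any appeal to uniqueness to ``pin down $\phi^0$ and $\phi^1$ in their respective symmetry classes'' --- since Propositions \ref{prop1} and \ref{proposition4.1} only assert existence (not uniqueness), and the conclusion of Proposition \ref{prop1section4final} makes no parity claim about $\phi$ itself, once $\phi^0$ solves the problem with data $(g^0,h^0)$ and $\phi^1$ with $(g^1,h^1)$, their sum solves the problem with data $(g,h)$ by linearity of \eqref{e:innerproblemlinear}, and the estimate follows by adding the two bounds; the parity of $Z_2,Z_3$ is needed only to verify the orthogonality hypotheses, which you do correctly.
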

\begin{remark}
If conditions (\ref{e:orthognalconditionz3final2}) and (\ref{e:orthognalconditionz3final3}) are not satisfied, by the same argument of Step 1 in Proposition \ref{prop1}, we find a solution $\phi$ of (\ref{e:innerproblemlinear}) satisfying
\begin{equation*}
\begin{aligned}
(1+|(u, v)|)&|\nabla\phi(u, v, \tau)|+|\phi(u, v,\tau)|\\
&\lesssim\tau^{-\nu}R^2\log^2 R(R^{2-a}\|g\|_{a, \nu}+(1+|u|)^{1-a}\|h\|_{a, \nu}).
\end{aligned}
\end{equation*}
We will use this fact in Section 5.
\end{remark}
\section{Solving the inner-outer gluing system}
We separate the proof of Theorem \ref{t:main} into the following steps.

{\bf Step 1}. We formulate the inner-outer system (\ref{e:innerproblem})-(\ref{e:outerproblem}) into a fixed point problem in a suitable space.

\noindent $\bullet$ The inner problem. Define
\begin{equation*}
G_1[\lambda,\xi,\psi](u, v, \tau) = \chi_{\mathcal{D}_{2R}}\lambda^2\mathcal{E}^*_1\text{ in }\mathbb{R}^2_+\times (\tau_0, +\infty),
\end{equation*}
\begin{equation*}
\begin{aligned}
&G_2[\lambda, \xi, \psi](u, 0, \tau)\\
&= \chi_{\mathcal{D}_{2R}\cap\left(\partial\mathbb{R}^2_+\times (\tau_0, +\infty)\right)}\left(\lambda\Pi_{\omega^\perp}\mathcal{E}^*_2 + \frac{2}{1+\left|u\right|^2}\Pi_{\omega^\perp}\psi\right)\\
&\quad +\chi_{\mathcal{D}_{2R}\cap\left(\partial\mathbb{R}^2_+\times (\tau_0, +\infty)\right)}\times\\
&\left(\frac{1}{\pi}\int_{\mathbb{R}}\frac{(\omega(u,0)-\omega(u-z,0))\cdot\left(\Pi_{\omega^\perp}\psi(u,0,\tau) -\Pi_{\omega^\perp}\psi(u-z,0,\tau)\right)}{|z|^2}dz\right)\omega(u, 0),
\end{aligned}
\end{equation*}
\begin{equation*}
\begin{aligned}
c[\lambda, \xi, \psi](\tau) &= \frac{1}{\int_{B^+_{2R}}\chi Z_2^2 + \int_{-2R}^{2R}\chi Z_2^2}\times\\
&\quad\quad\quad\quad\quad\quad\left(\int_{B^+_{2R}}G_1[\lambda,\xi,\psi]\cdot Z_{2}dudv + \int_{-2R}^{2R}G_2[\lambda,\xi,\psi]\cdot Z_{2}du\right),
\end{aligned}
\end{equation*}
\begin{equation*}
\begin{aligned}
d[\lambda, \xi, \psi](\tau) &= \frac{1}{\int_{B^+_{2R}}\chi Z_3^2 + \int_{-2R}^{2R}\chi Z_3^2}\times\\
&\quad\quad\quad\quad\quad\quad\left(\int_{B^+_{2R}}G_1[\lambda,\xi,\psi]\cdot Z_{3}dudv + \int_{-2R}^{2R}G_2[\lambda,\xi,\psi]\cdot Z_{3}du\right),
\end{aligned}
\end{equation*}
\begin{equation*}
\overline{G}_1[\lambda,\xi,\psi](u, v, \tau) = c(\tau)\chi Z_2(u, v) + d(\tau)\chi Z_3(u, v),
\end{equation*}
\begin{equation*}
\overline{G}_2[\lambda,\xi,\psi](u, \tau) = c(\tau)\chi Z_2(u) + d(\tau)\chi Z_3(u).
\end{equation*}
Here $\chi(u, v) = \frac{1}{1+|(u, v)|}$. Then $\phi$ solves equation (\ref{e:innerproblem}) if $\phi_1$ and $\phi_2$ solve
\begin{equation*}
\left\{
\begin{array}{ll}
\partial_\tau\phi_1 = \Delta\phi_1 +(G_1-\overline{G}_1)[\lambda,\xi,\psi](u, v, \tau)\text{ in }\mathbb{R}^2_+\times (\tau_0, +\infty),\\\\
-\frac{d}{dv}\phi_1(u, 0, \tau) = \frac{2}{1+\left|u\right|^2}\phi_1\\
\quad + \frac{1}{\pi}\left[\int_{\mathbb{R}}\frac{(\omega(u,0)-\omega(u-z,0))\cdot\left(\phi_1(u,0,\tau) -\phi_1(u-z,0,\tau)\right)}{|z|^2}dz\right]\omega(u, 0)\\
\quad + (G_2-\overline{G}_2)[\lambda,\xi,\psi](u, 0, \tau)\text{ in }\partial\mathbb{R}^2_+\times (\tau_0, +\infty),\\
\phi_1 = 0\text{ in }\mathbb{R}^2_+\times(-\infty, \tau_0]
\end{array}
\right.
\end{equation*}
and
\begin{equation*}
\left\{
\begin{array}{ll}
\partial_\tau\phi_2 = \Delta\phi_2 + \overline{G}_1[\lambda,\xi,\psi](u, v, \tau)\text{ in }\mathbb{R}^2_+\times (\tau_0, +\infty),\\\\
-\frac{d}{dv}\phi_2(u, 0, \tau) = \frac{2}{1+\left|u\right|^2}\phi_2\\
\quad + \frac{1}{\pi}\left[\int_{\mathbb{R}}\frac{(\omega(u,0)-\omega(u-z,0))\cdot\left(\phi_2(u,0,\tau) -\phi_2(u-z,0,\tau)\right)}{|z|^2}dz\right]\omega(u, 0)\\
\quad + \overline{G}_2[\lambda,\xi,\psi](u, 0, \tau)\text{ in }\partial\mathbb{R}^2_+\times (\tau_0, +\infty),\\
\phi_2 = 0\text{ in }\mathbb{R}^2_+\times(-\infty, \tau_0],
\end{array}
\right.
\end{equation*}
respectively. Let $\phi=:\mathcal{T}[g, h]$ be the bounded linear operator constructed in Proposition \ref{prop1section4final}, then (\ref{e:innerproblem}) is equivalent to the following fixed point problem
\begin{equation*}
\left\{
\begin{aligned}
\phi_1&=\mathcal{T}[(G_1-\overline{G}_1)(\lambda,\xi,\psi), (G_2-\overline{G}_2)(\lambda,\xi,\psi)],\\
\phi_2&=\mathcal{T}[\overline{G}_1(\lambda,\xi,\psi), \overline{G}_2(\lambda,\xi,\psi)].
\end{aligned}
\right.
\end{equation*}

\noindent $\bullet$ The outer problem. Rewrite equation (\ref{e:outerproblem}) as
\begin{equation}\label{e:outerproblem5.1}
\left\{
\begin{array}{ll}
\partial_t\psi = \Delta\psi + H_1[\psi,\phi,\lambda, \xi](x, 0, t)\text{ in }\mathbb{R}^2_+\times (0, T), \\
-\frac{d}{dy} \psi(x, 0, t) = H_2[\psi,\phi,\lambda, \xi](x, 0, t)\text{ in }\partial\mathbb{R}^2_+\times (0, T),
\end{array}
\right.
\end{equation}
where
\begin{equation*}
\begin{aligned}
& H_1[\psi,\phi,\lambda, \xi](x, 0, t) =
- \frac{\dot{\lambda}}{\lambda}\eta(u, v)\cdot \nabla_{(u, v)}\phi - \eta\frac{\dot{\xi}}{\lambda}\frac{d\phi}{du}\\
&\quad\quad\quad + [\Delta\eta\phi + \frac{2}{\lambda}\nabla\eta\nabla\phi - \partial_t\eta\phi] + (1-\eta)\mathcal{E}^*_1,
\end{aligned}
\end{equation*}
\begin{equation*}
\begin{aligned}
& H_2[\psi,\phi,\lambda, \xi](x, 0, t) =
(1-\eta)\frac{2\lambda}{\lambda^2+|x-\xi(t)|^2}\psi + (1-\eta)\Pi_{U^\perp}\mathcal{E}^*_2\\
& \quad\quad\quad\quad\quad\quad\quad\quad\quad\quad + \left(\frac{d}{dy}\eta\right)(x, 0, t) \phi + N_U(\Pi_{U^\perp}(\Phi^*+\varphi)).
\end{aligned}
\end{equation*}
To solve (\ref{e:outerproblem5.1}), we first consider the corresponding linear problem
\begin{equation}\label{e:outerproblemmodel}
\left\{
\begin{array}{lll}
\psi_t = \Delta\psi + f(x, y, t)\text{ in }\mathbb{R}^2_+\times (0, T),\\
-\frac{d}{dy}\psi = g(x,t), x\in \mathbb{R}\times (0, T),\\
\psi(q,0,T) = 0, \\
\psi(x,y, 0) = (c_1{\bf e_1} + c_2 {\bf e_2})\eta_1\text{ in }\mathbb{R}^2_+.
\end{array}
\right.
\end{equation}
for suitable constants $c_1,c_2$, where
\begin{equation*}
\mathbf{e_1}  = \left ( \begin{matrix} 1\\ 0\end{matrix}  \right ),\quad
\mathbf{e_2}  = \left ( \begin{matrix} 0\\ 1\end{matrix}  \right ),
\end{equation*}
and $\eta_1$ is a smooth cut off function with compact support and $\eta_1\equiv 1$ in a neighborhood of $(q, 0)$. For a function $f(x, y, t)$,
define the $L^\infty$-weighted norm as follows
$$
\|f\|_{**} : = \sup_{\mathbb{R}^2_+\times(0,T)}\Big(1+\sum_{i=1}^{3}\varrho_i(x, y, t)\, \Big )^{-1}{|f(x, y, t)|}.
$$
Here $\varrho_1 :=\lambda_0^{\Theta-2}R^{-a}\chi_{\{r<2R\lambda_0\}}$, $\varrho_2:=T^{-\sigma_0}(1-\eta)\frac{\lambda_0}{r^2+\lambda_0^2}$,
$\varrho_3 :=1$, $\sigma_0$ and $\Theta>0$ are small. Also, for $\gamma\in (0, \frac{1}{2})$, we define
\begin{equation}\label{defNorm1Psi}
\begin{aligned}
\|\psi\|_{a, \Theta, \gamma}&=\frac{1}{\lambda_0(0)^{\Theta}R(0)^{2-a}|\log T|}\sup_{\mathbb{R}^2_+\times(0,T)}|\psi(x,y,t)|\\
&\quad+\sup_{\mathbb{R}^2_+\times(0,T)}\frac{1}{\lambda_0(t)^{\Theta} R(t)^{2-a}|\log(T-t)|}|\psi(x,y,t)-\psi(x,y,T)|\\
&\quad+\sup_{\mathbb{R}^2_+\times(0,T)}\frac{1}{\lambda_0(t)^{\Theta-1}R(0)^{1-a}}|\nabla\psi(x,y,t)|\\
&\quad+\sup_{\mathbb{R}^2_+\times(0,T)}\frac{1}{\lambda_0(t)^{\Theta-1}R(t)^{1-a}}|\nabla \psi(x,y,t)-\nabla\psi(x,y,T)|\\
&\quad+\sup_{\mathbb{R}^2_+\times(0,T)}\frac{1}{\lambda_0(t)^{\Theta-1-2\gamma}R(t)^{1-a-2\gamma}}\frac{|\nabla\psi(x,y,t)-\nabla\psi(x',y',t)|}{|(x,y)-(x',y')|^{2\gamma}}\\
&\quad+\sup_{}\frac{1}{\lambda_0(t)^{\Theta-1-2\gamma}R(t)^{1-a-2\gamma}}\frac{|\nabla\psi(x,y,t_2)-\nabla\psi(x,y,t_1)|}{(t_2-t_1)^{\gamma}},
\end{aligned}
\end{equation}
where the last supremum is taken over $(x, y)\in\mathbb{R}^2_+$, $0\leq t_1< t_2\leq T$ and $t_2-t_1\leq\frac{1}{10}(T-t_2)$.
Then by minor modifications of \cite{davila2017singularity}, we have
\begin{prop}\label{prop3}
For $T$, $\varepsilon>0$, there exists a linear operator mapping functions $f:\mathbb{R}^2_+\times (0,T)\to\mathbb{R}^2$, $g:\partial\mathbb{R}^2_+\times(0,T)\to\mathbb{R}^2$ with $\|f\|_{**}<\infty$, $\|g\|_{**}<\infty$ into $\psi$, $c_1,c_2$ so that (\ref{e:outerproblemmodel}) is satisfied and the following estimate holds
\begin{equation*}
\|\psi\|_{a, \Theta, \gamma}\leq C\left(\|f\|_{**}+\|g\|_{**}\right).
\end{equation*}
\end{prop}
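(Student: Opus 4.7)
\textbf{Proof proposal for Proposition \ref{prop3}.}

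The plan is to mimic the construction of the outer linear theory in \cite{davila2017singularity}, replacing the Cauchy problem on $\mathbb R^2$ by a half-space problem with inhomogeneous Neumann condition $-\partial_y\psi=g$ on $\partial\mathbb R^2_+$. The key representation tool is the Neumann heat kernel on $\mathbb R^2_+$,
\[
p_N(x,y;x',y';t)=\frac{1}{4\pi t}\Bigl(e^{-\frac{|x-x'|^2+(y-y')^2}{4t}}+e^{-\frac{|x-x'|^2+(y+y')^2}{4t}}\Bigr),
\]
which is obtained by reflection across $\{y=0\}$. By Duhamel, any solution of \eqref{e:outerproblemmodel} is written as
\[
\psi(x,y,t)=\int_{\mathbb R^2_+}p_N(\cdot;\cdot;t)\,\psi_0+\int_0^t\!\!\int_{\mathbb R^2_+}p_N(\cdot;\cdot;t-s)f(\cdot,s)+2\int_0^t\!\!\int_{\mathbb R}p_N(x,y;x',0;t-s)\,g(x',s)\,dx'\,ds,
\]
where $\psi_0=(c_1\mathbf e_1+c_2\mathbf e_2)\eta_1$. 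This defines three linear contributions that we estimate separately. The factor $2$ in the boundary term comes from the fact that $p_N|_{y'=0}=\frac{1}{2\pi t}e^{-\frac{|x-x'|^2+y^2}{4t}}$ encodes exactly the Neumann Poisson response.

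First I would split $f=f_1+f_2+f_3$ and $g=g_1+g_2+g_3$ in accordance with the three weights $\varrho_1,\varrho_2,\varrho_3$ that define $\|\cdot\|_{**}$, and estimate the three resulting pieces of $\psi$ in turn. The piece associated with $\varrho_1=\lambda_0^{\Theta-2}R^{-a}\chi_{\{r<2R\lambda_0\}}$ concentrates near $\xi(t)$ at scale $R\lambda_0$; integrating the heat kernel against a source of this size on such a shrinking support and using the monotonicity of $\lambda_0(t)$, $R(t)$ gives a contribution controlled by $\lambda_0(t)^{\Theta}R(t)^{2-a}|\log(T-t)|$, the logarithm arising from the time integral $\int_0^t\lambda_0(s)^{\Theta-2}\,ds$ after the Gaussian averaging. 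The piece associated with the self-similar far-field weight $\varrho_2$ is handled by the standard self-similar change of variables and produces a bound absorbed into the same template by the small factor $T^{-\sigma_0}$. The piece associated with $\varrho_3\equiv 1$ gives the trivial bound $\|\psi\|_\infty\lesssim T\|f\|_\infty+\sqrt{T}\|g\|_\infty$. A completely analogous splitting is applied to the boundary forcing $g$, using that the boundary heat kernel in one dimension gives exactly one half of a space derivative.

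Next I would derive the gradient and Hölder bounds encoded in \eqref{defNorm1Psi}. Interior parabolic Schauder estimates applied to cubes of size $\sim \lambda_0(t)R(t)$ near the blow-up region, together with boundary Schauder estimates for the oblique derivative problem $-\partial_y\psi=g$ applied near $\partial\mathbb R^2_+$, produce the factors $\lambda_0^{\Theta-1}R^{1-a}$ for $\nabla\psi$ and $\lambda_0^{\Theta-1-2\gamma}R^{1-a-2\gamma}$ for the Hölder seminorm of $\nabla\psi$ (spatial and parabolic in time); the exponents are read off by dimensional analysis in the self-similar variables. The difference bounds $\psi(x,y,t)-\psi(x,y,T)$ and $\nabla\psi(x,y,t)-\nabla\psi(x,y,T)$ follow from the same estimates applied to the tail integral $\int_t^T$ in the Duhamel formula.

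Finally, I would use the two parameters $c_1,c_2$ to enforce $\psi(q,0,T)=0\in\mathbb R^2$. Let $\psi_*$ be the solution produced above with $c_1=c_2=0$ and define the linear map $\mathcal M:(c_1,c_2)\mapsto((p_N*\eta_1\mathbf e_1)(q,0,T),(p_N*\eta_1\mathbf e_2)(q,0,T))+\mathrm{h.o.t.}$. Since $(p_N*\eta_1)(q,0,T)\to 1$ as $T\to 0$, $\mathcal M$ is invertible for $T$ small, and we set $(c_1,c_2)=-\mathcal M^{-1}(\psi_*(q,0,T))$; the bound on the $\|\cdot\|_{**}$ norm transfers to $|c_1|+|c_2|$ and hence to the full $\|\cdot\|_{a,\Theta,\gamma}$ norm of $\psi$.

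The main obstacle is the weighted sup estimate for the $\varrho_1$-piece near the concentration region: one must carefully split the time integration into the regimes $s$ close to $t$ (where the heat kernel is sharply peaked and one exploits the shrinking support of $f_1$) and $s$ far from $t$ (where the kernel is approximately constant on the support and one uses the integrability of $\lambda_0^{\Theta-2}$), producing exactly the logarithmic loss $|\log(T-t)|$ in the statement. The boundary contribution requires the analogous treatment with one fewer power of $\lambda_0$, which is why the Neumann term does not worsen the overall bound.
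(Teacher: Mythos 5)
The paper itself does not present a proof of this proposition: the authors write that it follows ``by minor modifications of \cite{davila2017singularity}.'' Your proposal supplies exactly the right modifications: replace the whole-space heat kernel of \cite{davila2017singularity} by the Neumann heat kernel on $\mathbb{R}^2_+$ obtained by even reflection, append a single-layer boundary potential in the Duhamel formula, and then run the three-weight splitting $(\varrho_1,\varrho_2,\varrho_3)$ together with interior/boundary Schauder estimates in the self-similar scale $\lambda_0 R$. The use of the two free parameters $c_1,c_2$ to enforce the pointwise condition $\psi(q,0,T)=0$ by inverting $(p_N*\eta_1)(q,0,T)\,\mathrm{Id}$ for $T$ small is also the intended mechanism, matching the role of the free constants in \cite{davila2017singularity}. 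So the approach is correct and faithful in structure.

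One concrete slip: the Duhamel representation should read
\[
\psi=\int_{\mathbb{R}^2_+}p_N\,\psi_0+\int_0^t\!\!\int_{\mathbb{R}^2_+}p_N\,f+\int_0^t\!\!\int_{\mathbb{R}}p_N(x,y;x',0;t-s)\,g(x',s)\,dx'\,ds,
\]
with \emph{no} additional factor of $2$ in front of the boundary term. Green's identity for the Neumann kernel gives the coefficient $1$, and the reflection doubling is already built into $p_N|_{y'=0}=\frac{1}{2\pi t}e^{-(|x-x'|^2+y^2)/(4t)}$; the direct check $-\partial_y\bigl[\int_0^t p_N(\cdot,0;t-s)g\,ds\bigr]\big|_{y=0^+}=g(t)$ confirms this. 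Your extra $2$ would double-count the reflection. This changes only a constant, not the scheme, but it should be corrected. A second, minor point: in the invertibility argument for $(c_1,c_2)$ the ``h.o.t.'' in $\mathcal{M}$ is in fact identically zero, since the initial-datum contribution is exactly scalar multiplication by $(p_N*\eta_1)(q,0,T)$; the map is therefore diagonal and invertible as soon as that scalar is nonzero, which holds for all $T>0$, not only small $T$.
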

\noindent Let $\psi=\mathcal{S}[f, g]$ be the operator defined in Proposition \ref{prop3}, then (\ref{e:outerproblem5.1}) is equivalent to
\begin{equation*}
\begin{aligned}
\psi = \mathcal{S}[H_1, H_2, \psi_{\infty}](\phi, \psi, \lambda, \xi).
\end{aligned}
\end{equation*}

\noindent $\bullet$ The choice of $\lambda$. To make $d(\tau)$ as small as possible, we solve the following equation approximately,
\begin{equation}\label{e:lambda}
\int_{B^+_{2R}}G_1\cdot Z_3 dudv + \int_{-2R}^{2R}G_2\cdot Z_3 du = 0.
\end{equation}
This is the case $\alpha = 0$ of $\lambda$-$\alpha$ system in \cite{davila2017singularity}, hence (\ref{e:lambda}) is equivalent to the fixed point problem
\begin{equation*}
\lambda = \mathcal{A}_1(\psi, \lambda, \xi).
\end{equation*}
We refer the readers to \cite{davila2017singularity} for details.

\noindent $\bullet$ The choice of $\xi$. To make $c(\tau)$ as small as possible, we solve the following equation
\begin{equation}\label{e:xi}
\int_{B^+_{2R}}G_1\cdot Z_2 dudv + \int_{-2R}^{2R}G_2\cdot Z_2 du = 0
\end{equation}
which is equivalent to a nonlinear ODE for form
\begin{equation*}
\dot{\xi} = \frac{1}{\int_{B^+_{2R}}Z_2\cdot Z_2 dudv}\left(\int_{B^+_{2R}}\left(G_1 + \dot{\xi}Z_2\right)\cdot Z_2 dudv + \int_{-2R}^{2R}G_2\cdot Z_2 du\right).
\end{equation*}
This can be rewritten as a fixed point problem
\begin{equation*}
\xi = \mathcal{A}_2(\psi, \lambda, \xi).
\end{equation*}

Combine the above arguments, the inner-outer system (\ref{e:innerproblem})-(\ref{e:outerproblem}) is equivalent to the following fixed point problem
\begin{equation}\label{defMapF1}
\psi=\mathcal{S}[H_1, H_2, \psi_{\infty}](\psi, \phi, \lambda, \xi),
\end{equation}
\begin{equation}\label{defMapF2}
\phi_1=\mathcal{T}[(G_1-\overline{G}_1)(\lambda,\xi,\psi), (G_2-\overline{G}_2)(\lambda,\xi,\psi)],
\end{equation}
\begin{equation}\label{defMapF20}
\phi_2=\mathcal{T}[\overline{G}_1(\lambda,\xi,\psi), \overline{G}_2(\lambda,\xi,\psi)],
\end{equation}
\begin{equation}\label{defMapF3}
\lambda = \mathcal{A}_1(\psi, \lambda, \xi),\\
\end{equation}
\begin{equation}\label{defMapF4}
\xi = \mathcal{A}_2(\psi, \lambda, \xi).
\end{equation}

{\bf Step 2}. To set up the fixed point problem (\ref{defMapF1})-(\ref{defMapF4}), we give a description of the relevant functional space . First, set
\begin{equation*}
R(t) = \lambda_0(t)^{-\beta}, \,\, \beta = \frac{1}{4}+\sigma
\end{equation*}
and
$$
a = 2- \sigma,
$$
for a small but fixed number $\sigma>0$. Take $\phi$ in the following space
$$
X(a,\nu) = \{\phi\in C(\overline{\mathcal{D}}_{2R}):\,\nabla\phi\in C(\overline{\mathcal{D}}_{2R}), \ \|\phi\|_{X(a,\nu)}<\infty\},
$$
where
\begin{equation*}
\begin{aligned}
\|\phi\|_{X(a,\nu)}=\sup_{(u, v,\tau) \in \mathcal D_{2R}}\frac{1}{\lambda_0^{\nu}\frac{R^{6-a}\log^2 R}{1+\left|(u, v)\right|^2}}\left[(1+\left|(u, v)\right|)\left|\nabla\phi(u, v,\tau)\right|+|\phi(u, v,\tau)|\right],
\end{aligned}
\end{equation*}
$a\in (1,2)$ is close to 2 and $\nu \in (0,1)$ is close to 1. Also we take $\psi$ in the space
$$
Y(a',\Theta,\gamma') = \{\psi\in C(\mathbb{R}^2_+\times [0,T)):\nabla \psi\in C(\mathbb{R}^2_+\times [0,T)),\|\psi\|_{a',\Theta,\gamma'} <\infty\}
$$
for parameters $a'<a$ and $\gamma'<\gamma$. Note that the norm $\|\cdot\|_{a',\Theta,\gamma'}$ is weaker than $\|\cdot\|_{a,\Theta,\gamma}$,
and the inclusion $Y(a,\nu,\gamma)\hookrightarrow Y(a',\nu,\gamma')$ is compact.

We assume that the parameter $\lambda$ is in the space of $C^1[-T, T]$ functions satisfying $\lambda(T) = 0$ with norm
$$
\|g\|_{\mu} = \sup_{t\in [-T, T]}(T-t)^{-\mu}|g(t)|
$$
for $\mu\in (0, 1)$ small, while $\xi$ is in the space $C^1([0,T])$ satisfying $\xi(T)=q$ with norm
$$
\|\xi\|_{\sigma}=\sup_{t\in [0,T]}(T-t)^{-\sigma}|\dot\xi(t)| ,
$$
for some $\sigma>0$ fixed.

For $R_1 >0 $ small but fixed, let us define the set
\begin{equation*}
\begin{aligned}
A = \{(\psi, &\phi_1, \phi_2, \lambda, \xi)\in Y(a',\Theta,\gamma')\times X(a,\nu)\times X(a,\nu)\times C^1[-T, T]\times C^1[0, T]\Big |\\
&\|\phi_1\|_{X(a,\nu)}+\|\phi_2\|_{X(a,\nu)}+\|\psi\|_{Y(a',\Theta,\gamma')}+\|\lambda\|_{\mu}+\|\xi\|_{\sigma}\leq T^{\sigma_0}+R_1\}.
\end{aligned}
\end{equation*}
Let $(\psi, \phi, \lambda, \xi)\mapsto\mathcal F(\psi, \phi, \lambda, \xi)$ be the map defined by (\ref{defMapF1})-(\ref{defMapF4}).

{\bf Step 3}. We show that $\mathcal F$ maps the set $A$ into itself and it is a compact operator. To this aim, we should estimate (\ref{defMapF1})-(\ref{defMapF4}) respectively.

\noindent {\bf Estimations for (\ref{defMapF1})}.
We claim if $R_1>0$ is fixed and small, there holds
\begin{equation}\label{estH}
\|H_1(\psi,\phi,\lambda,\xi)\|_{**}\leq C(T^{\sigma_0}+R_1).
\end{equation}
First we consider the term $(1-\eta)\frac{2\lambda}{\lambda^2+|x-\xi(t)|^2}\psi$. Since $\psi(q,0,T)=0$ and from the definition of $\| \ \|_{a',\Theta,\gamma'}$, we have
\begin{equation*}
\begin{aligned}
|\psi(x,0,t)|&\leq(|\psi(x,0,t)-\psi(x,0,T)|+|\psi(x,0,T)-\psi(q,0,T)|\\
&\leq (r+\lambda_0^\Theta(t)R(t)^{2-a'}|\log(T-t)|)\|\psi\|_{a',\Theta,\gamma'}.
\end{aligned}
\end{equation*}
Hence
\begin{equation*}
\begin{aligned}
&|(1-\eta)\frac{2\lambda}{\lambda^2+|x-\xi(t)|^2}\psi|\\
&\leq (1-\eta)\frac{2\lambda_0}{(r+\lambda_0)^2}(|\psi(x,0,t)-\psi(x,0,T)|+|\psi(x,0,T)-\psi(q,0,T)| )\\
&\leq C(1-\eta)\frac{\lambda_0}{(r+\lambda_0)^2}(r+\lambda_0^\Theta(t) R(t)^{2-a'}|\log(T-t)|)\|\psi\|_{a',\Theta,\gamma'}\\
&\leq C(\varrho_2+\varrho_3)\|\psi\|_{a',\Theta,\gamma'}
\end{aligned}
\end{equation*}
and
\begin{equation*}
\|(1-\eta)\frac{2\lambda}{\lambda^2+|x-\xi(t)|^2}\psi\|_{**}\leq C\|\psi\|_{Y(a',\nu,\gamma')}.
\end{equation*}
Next we consider $\Delta\eta\phi$. From the definition of $\|\cdot\|_{X(a,\nu)}$, when $R\leq|(u, v)|\leq 2R$, we have
$$
|\phi(u, v, \tau)|+(1+|(u, v)|)|\nabla \phi(u, v, \tau)|\leq\|\phi\|_{X(a,\nu)}\lambda_0^\nu R^{4-a}\log^2R.
$$
Hence
\begin{equation*}
\begin{aligned}
|\Delta\eta\phi|&\leq\frac{1}{\lambda^2 R^2}\chi_{\{|(x,y)-(\xi,0)|\leq 2\lambda R\}}|\phi(y,\tau)|\\
&\leq C\lambda_0^{\nu-2}R^{2-a}\log^2R\chi_{\{|(x,y)-(q,0)|\leq C\lambda_0 R\}}\|\phi\|_{X(a,\nu)}\\
&\leq C\lambda_0^{\Theta-2}R^{-a'}\chi_{\{|(x,y)-(q,0)|\leq C\lambda_0 R\}}\|\phi\|_{X(a,\nu)}\\
&\leq C\varrho_1\|\phi\|_{X(a,\nu)}
\end{aligned}
\end{equation*}
and
\begin{equation*}
\|\Delta\eta\phi\|_{**}\leq C\|\phi\|_{X(a,\nu)}.
\end{equation*}
Similarly, we have
$$
\|\left(\frac{d}{dy}\eta\right)(x, 0, t)\phi\|_{**}+\|(\partial_t\eta)\phi \|_{**}+\|\lambda^{-1}\nabla\eta\nabla\phi\|_{**}\leq C\|\phi\|_{X(a,\nu)}.
$$
For the term $-\frac{\dot{\lambda}}{\lambda}\eta(u, v)\cdot\nabla_{(u, v)}\phi-\frac{\dot{\xi}}{\lambda}\eta\frac{d\phi}{du}$, since $|\dot\lambda|\leq C$, we have
\begin{equation*}
\left|\frac{\dot{\lambda}}{\lambda}\eta(u, v)\cdot\nabla_{(u, v)}\phi\right|\leq C\lambda_0^{\nu-1}R^{6-a}\log^2R\chi_{\{|x-q|\leq 2\lambda_0(t)R(t)\}}\|\phi\|_{X(a,\nu)}.
\end{equation*}
Similarly,
\begin{equation*}
\left|\frac{\dot{\xi}}{\lambda}\eta\frac{d\phi}{du}\right|\leq C\lambda_0^{\nu-1}R^{6-a}\log^2R\chi_{\{|x-q|\leq 2\lambda_0(t)R(t)\}}\|\phi\|_{X(a,\nu)}.
\end{equation*}
Therefore,
\begin{equation*}
\|-\frac{\dot{\lambda}}{\lambda}\eta(u, v)\cdot\nabla_{(u, v)}\phi-\frac{\dot{\xi}}{\lambda}\eta\frac{d\phi}{du}\|_{**}\leq C\|\phi\|_{X(a,\nu)}.
\end{equation*}
For $(1-\eta)\Pi_{U^\perp}\mathcal{E}^*_2$, we have
\begin{equation*}
|(1-\eta)\Pi_{U^\perp}\mathcal{E}^*_2|\leq (1-\eta)\frac{\lambda_0}{r^2+\lambda_0^2},
\end{equation*}
hence
$$
\|(1-\eta)\Pi_{U^\perp}\mathcal{E}^*_2\|_{**}\leq C T^{\sigma_0}.
$$
Similarly,
$$
\|(1-\eta)\mathcal{E}^*_1\|_{**}\leq C T^{\sigma_0}.
$$
The proof of the estimate
$$
\|N(\varphi^*+\psi+\eta Q_\alpha\phi)\|_{**}\leq C(\|\psi\|_{a',\Theta,\gamma'}+\|\phi\|_{X(a,\nu)})
$$
is analogous as the previous terms, so we omit the details. From the above estimates, we obtain (\ref{estH}).
By (\ref{estH}) and Proposition (\ref{prop3}), there holds
\begin{equation}\label{estS}
\|\mathcal{S}[H_1, H_2, \psi_{\infty}](\phi, \psi, \lambda, \xi)\|_{a',\Theta,\gamma'}\leq C R_1.
\end{equation}

\noindent{\bf Estimations for (\ref{defMapF2})}. Now we consider (\ref{defMapF2}). By Lemma 3.2 in \cite{SWY}, there holds
\begin{equation*}
\begin{aligned}
|\tilde L_w[\psi]|&\leq C\frac{\lambda_0}{1+\rho^2}\|\nabla\psi\|_{L^\infty}\leq C\frac{\lambda_0}{1+\rho^2}T^{\Theta}\|\psi\|_{Y(a',\nu,\gamma')}.
\end{aligned}
\end{equation*}
Fix $a_1\in(a,2)$ and $\nu_1\in(\nu,1)$, which implies
$$
\|G_1(\lambda,\xi,\psi)\|_{a_1,\nu_1}\leq C T^{1-\nu_1} + C T^{\Theta}R_1
$$
and
$$
\|G_2(\lambda,\xi,\psi)\|_{a_1,\nu_1}\leq CT^{1-\nu_1}+CT^{\Theta}R_1.
$$
Therefore
\begin{equation}\label{estT1}
\begin{aligned}
\|\mathcal{T}[G_1(\lambda,\xi,\psi), G_2(\lambda,\xi,\psi)]\|_{X(a_1,\nu_1)}\leq CT^{1-\nu_1} + CT^{\nu-1+\beta(a'-1)}R_1.
\end{aligned}
\end{equation}

\noindent{\bf Estimations for (\ref{defMapF20})}. From the choice of $\xi$, $c(\tau)=0$ and from the result of \cite{davila2017singularity},
\begin{equation*}
|d(\tau)|\leq C\lambda_0(T-t)^{\sigma_2}R(t)^{1-a'}(\|a(\cdot)-a(T)\|_{\mu,l-1} + \|a(\cdot)-a(T) \|_{\gamma',m,l-1})
\end{equation*}
and
\begin{equation*}
\|a(\cdot)-a(T)\|_{\mu,l-1} + \|a(\cdot)-a(T)\|_{\gamma',m,l-1}\leq C\|\psi\|_{a',\Theta,\gamma'}\leq C R_1.
\end{equation*}
Hence we have
\begin{equation}\label{estTildeSi00}
\|\tilde\phi_2\|_{X(a_1,\nu_1)}=\mathcal{T}[c_{lj}(\Theta(\lambda,\alpha,\xi,\psi),\psi))\chi Z_{lj}]\leq C R_1,
\end{equation}
which holds since the decay of $\chi Z_{3}$ is $\frac{1}{1+\rho^2}$ and $\nu_1$ is close to $\nu$ depending on $\sigma$.

\noindent{\bf Estimations for (\ref{defMapF3})}. From Proposition 6.1 in \cite{davila2017singularity}, we obtain
\begin{equation}\label{estTildeSi11}
\|\lambda\|_{\mu}\leq C\left|\log T\right|^{\beta(a'-1)}
\end{equation}
for $\mu = \beta(a-1)$.

\noindent{\bf Estimations for (\ref{defMapF4})}.
The definition of $\|\psi\|_{a',\Theta,\gamma'}$ implies that
\begin{equation}\label{estTildeSi}
\|\xi\|_{\mu_1} \leq C R_1
\end{equation}
for $\mu_1 = \nu-1+\beta(a-1)$.

From the estimates (\ref{estS}), (\ref{estT1}), (\ref{estTildeSi00}), (\ref{estTildeSi11}), (\ref{estTildeSi}) and standard parabolic estimates, $\mathcal F$ is compact from the set $A$ into itself. The existence of a solution follows then from Schauder's fixed point theorem, which completes the proof of Theorem \ref{t:main}.

\section*{Acknowledgements}
J. Wei is partially supported by NSERC of Canada. Y. Zheng is partially supported by NSF of China (11301374). Y.S. is partially supported by the Simons foundation.

\end{document}